\title{Big mapping class groups with hyperbolic actions: classification and applications}
\author{Camille Horbez, Yulan Qing and Kasra Rafi}
\begin{document}
\maketitle
\newtheorem{de}{Definition} [section]
\newtheorem{theo}[de]{Theorem} 
\newtheorem{prop}[de]{Proposition}
\newtheorem{lemma}[de]{Lemma}
\newtheorem{cor}[de]{Corollary}
\newtheorem{propd}[de]{Proposition-Definition}
\newtheorem{conj}[de]{Conjecture}
\newtheorem{claim}{Claim}
\newtheorem*{claim2}{Claim}

\newtheorem{theointro}{Theorem}
\newtheorem*{defintro}{Definition}
\newtheorem{corintro}[theointro]{Corollary}

\theoremstyle{remark}
\newtheorem{rk}[de]{Remark}
\newtheorem{ex}[de]{Example}
\newtheorem{question}[de]{Question}

\normalsize

\newcommand{\Aut}{\mathrm{Aut}}
\newcommand{\Out}{\mathrm{Out}}
\newcommand{\Inn}{\mathrm{Inn}}
\newcommand{\stab}{\operatorname{Stab}}
\newcommand{\dunion}{\sqcup}
\newcommand{\eps}{\varepsilon}
\renewcommand{\epsilon}{\varepsilon}
\newcommand{\calf}{\mathcal{F}}
\newcommand{\cali}{\mathcal{I}}
\newcommand{\caly}{\mathcal{Y}}
\newcommand{\calx}{\mathcal{X}}
\newcommand{\calz}{\mathcal{Z}}
\newcommand{\calo}{\mathcal{O}}
\newcommand{\calb}{\mathcal{B}}
\newcommand{\calV}{\mathcal{V}}
\newcommand{\calq}{\mathcal{Q}}
\newcommand{\calu}{\mathcal{U}}
\newcommand{\call}{\mathcal{L}}
\newcommand{\bbR}{\mathbb{R}}
\newcommand{\bbZ}{\mathbb{Z}}
\newcommand{\bbD}{\mathbb{D}}
\newcommand{\NT}{\mathrm{NT}}
\newcommand{\actson}{\curvearrowright}
\newcommand{\caln}{\mathcal{N}}
\newcommand{\calg}{\mathcal{G}}
\newcommand{\calr}{\mathcal{R}}
\newcommand{\calt}{\mathcal{T}}
\newcommand{\calc}{\mathcal{C}}
\newcommand{\calm}{\mathcal{M}}
\newcommand{\adm}{\mathrm{adm}}
\newcommand{\cala}{\mathcal{A}}
\newcommand{\cals}{\mathcal{S}}
\newcommand{\calh}{\mathcal{H}}
\newcommand{\Stab}{\mathrm{Stab}}
\newcommand{\Isom}{\mathrm{Isom}}
\newcommand{\bdd}{\mathrm{bdd}}
\newcommand{\calp}{\mathcal{P}}
\newcommand{\Fix}{\mathrm{Fix}}
\newcommand{\cald}{\mathcal{D}}
\newcommand{\Mod}{\mathrm{Mod}}
\newcommand{\PML}{\mathrm{PML}}
\newcommand{\Map}{\mathrm{Map}}
\newcommand{\diam}{\mathrm{diam}}
\newcommand{\Disp}{\mathrm{Disp}}
\newcommand{\Bdd}{\mathrm{HB}}
\newcommand{\Ends}{\mathrm{Ends}}
\newcommand{\Homeo}{\mathrm{Homeo}}
\newcommand{\Axis}{\mathrm{Axis}}
\newcommand{\inj}{\mathrm{inj}}

\newcommand{\from}{{\colon \thinspace}}
\newcommand{\genus}{\mathrm{genus}}
\newcommand{\ens}{\mathrm{ens}}

\makeatletter
\edef\@tempa#1#2{\def#1{\mathaccent\string"\noexpand\accentclass@#2 }}
\@tempa\rond{017}
\makeatother
 
\newcommand{\Ccom}[1]{\Cmod\marginpar{\color{red}\tiny #1 --ch}} 
\newcommand{\Cmod}{$\textcolor{red}{\clubsuit}$} 

\newcommand{\Kcom}[1]{\Kmod\marginpar{\color{purple}\tiny #1 --ch}} 
\newcommand{\Kmod}{$\textcolor{purple}{\clubsuit}$} 

\newcommand{\Ycom}[1]{\Ymod\marginpar{\color{blue}\tiny #1 --ch}} 
\newcommand{\Ymod}{$\textcolor{blue}{\clubsuit}$}

\begin{abstract}
We address the question of determining which mapping class groups of infinite-type surfaces admit nonelementary continuous actions on hyperbolic spaces. 

More precisely, let $\Sigma$ be a connected, orientable surface of infinite type with tame endspace whose mapping class group is generated by a coarsely bounded subset. We prove that $\Map(\Sigma)$ admits a continuous nonelementary action on a hyperbolic space if and only if $\Sigma$ contains a finite-type subsurface which intersects all its homeomorphic translates. 

When $\Sigma$ contains such a nondisplaceable subsurface $K$ of finite type, the hyperbolic space we build is constructed from the curve graphs of $K$ and its homeomorphic translates via a construction of Bestvina, Bromberg and Fujiwara. Our construction has several applications: first, the second bounded cohomology of $\Map(\Sigma)$ contains an embedded $\ell^1$; second, using work of Dahmani, Guirardel and Osin, we deduce that $\Map(\Sigma)$ contains nontrivial normal free subgroups (while it does not if $\Sigma$ has no nondisplaceable subsurface of finite type), has uncountably many quotients and is SQ-universal. 
\end{abstract}

\section*{Introduction}

Let $\Sigma$ be a connected orientable surface, possibly of infinite type. We tackle the following two questions. Under what conditions on $\Sigma$ does the mapping class group $\Map(\Sigma)$ admit a continuous nonelementary isometric action on a hyperbolic space? When it does, what algebraic properties of $\Map(\Sigma)$ can one deduce from such an action?

\subsection*{Hyperbolic actions and nondisplaceable subsurfaces}

Concerning the first of these questions, the case where $\Sigma$ is a surface of finite type (i.e.\ with finitely generated fundamental group) has been famously answered by Masur and Minsky, who proved in \cite{MM} that the curve graph of any connected orientable surface of finite type is hyperbolic and admits a nonelementary action of $\Map(\Sigma)$, except in a few low-complexity cases. Their theorem is a milestone in the theory of mapping class groups of finite-type surfaces, thus motivating the analogous question for infinite-type surfaces.

There have been a lot of recent developments towards this question. In 2009, Calegari defined the ray graph of a plane minus a Cantor set \cite{Cal}, and conjectured that it is hyperbolic and unbounded, and that there are elements of $\Map(\Sigma)$ acting loxodromically on it. These conjectures have been answered positively by Bavard \cite{Bav}, which was the start of a lot of activity on big mapping class groups. Further developments, providing actions of $\Map(\Sigma)$ on hyperbolic graphs under various topological conditions on the surface, include \cite{AFP,DFV,FGM}, for instance. The study of the geometry of such graphs is still in constant expansion, see e.g.\ \cite{BDR,HHMV2,AGKMTW}.

From a different viewpoint, Mann and the third named author suggested in \cite{MR} that this question should be part of a more general framework, phrased in the language of Rosendal's approach to geometric group theory of (non-finitely generated, non-compactly generated) Polish groups \cite{Ros}. In particular, under a soft topological condition on the endspace of $\Sigma$ called \emph{tameness}\footnote{This is satisfied for instance if every end $\xi$ of $\Sigma$ has a neighborhood $U_\xi$ in the endspace which is stable in the sense that every subneighborhood contains a copy of $U_\xi$. See Section~\ref{sec:statement} for a complete definition.}, they classified which mapping class groups admit unbounded continuous isometric actions at all on metric spaces; they also classified which mapping class groups are \emph{CB-generated}, i.e.\ have a generating set that has bounded orbits in every continuous isometric action of $\Map(\Sigma)$. In addition, they coined the notion of a \emph{nondisplaceable} connected subsurface of $\Sigma$ -- defined as a subsurface $K$ such that $\phi(K)\cap K\neq\emptyset$ for every $\phi\in\Homeo(\Sigma)$ -- and established that the existence of nondisplaceable subsurfaces of finite type yields the existence of unbounded length functions on $\Map(\Sigma)$. The same notion was also independently introduced by Clay, Mangahas and Margalit \cite{CMM} in their work on normal right-angled Artin subgroups of mapping class groups of finite-type surfaces, where these were called \emph{orbit-overlapping} subsurfaces. The concept of nondisplaceable subsurfaces is key to our work, and we prove the following classification theorem -- in which we say that an isometric group action on a hyperbolic space is \emph{nonelementary} if it contains two independent loxodromic elements.

\begin{theointro}\label{theointro:hyperbolic-actions}
Let $\Sigma$ be a connected orientable surface of infinite type. 
\begin{enumerate}
\item If $\Sigma$ contains a nondisplaceable subsurface $K$ of finite type, then $\Map(\Sigma)$ acts continuously, nonelementarily by isometries on a hyperbolic space $\mathbb{X}$. In addition, the action can be chosen such that every element of $\Map(\Sigma)$ which preserves the isotopy class of $K$ and restricts to a pseudo-Anosov mapping class of $K$ has the WWPD property with respect to the $\Map(\Sigma)$-action on $\mathbb{X}$. 
\item Assume in addition that $\Map(\Sigma)$ is CB-generated and that the endspace of $\Sigma$ is tame. If $\Sigma$ does not contain any nondisplaceable subsurface of finite type, then $\Map(\Sigma)$ does not admit any continuous nonelementary isometric action on a hyperbolic space. 
\end{enumerate}
\end{theointro}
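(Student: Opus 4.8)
The plan is to prove the two parts by quite different means. For part~(1), I would start from a nondisplaceable finite-type subsurface $K$ and consider the collection $\mathbf{Y}$ of all $\Map(\Sigma)$-translates of the isotopy class of $K$. To each such translate $Y$ one attaches its curve graph $\calc(Y)$, which is hyperbolic by Masur--Minsky. The key is to check that this family satisfies the axioms of the Bestvina--Bromberg--Fujiwara projection complex machinery: one needs a projection $\pi_Y(Z)$ between distinct translates, defined via subsurface projection, and the three projection axioms (symmetry, the ``large projection'' / Behrstock inequality, and finiteness of the set of $Y$ with large projection between a fixed pair). Nondisplaceability of $K$ guarantees that any two translates $Y,Z$ intersect, so subsurface projection $\pi_Y(\partial Z)$ is always defined and coarsely well-defined; the Behrstock inequality for subsurface projections (Behrstock's lemma, as used in Masur--Minsky / BBF) gives the second axiom, and the finiteness axiom follows from a bounded-geodesic-image type argument. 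One then forms the BBF space $\mathbb{X} = \calc_K(\mathbf{Y})$ obtained by blowing up the projection complex, replacing each vertex $Y$ by $\calc(Y)$; this is hyperbolic, and $\Map(\Sigma)$ acts on it continuously (continuity because the action on the discrete set $\mathbf{Y}$ and on each curve graph is by the permutation/automorphism action, which is continuous for the permutation topology on $\Map(\Sigma)$). Finally, for nonelementarity and the WWPD statement: an element $\phi$ preserving $[K]$ and restricting to a pseudo-Anosov on $K$ acts loxodromically on $\calc(K)$, hence loxodromically on $\mathbb{X}$ with axis essentially inside the $\calc(K)$-fiber; the WWPD property follows from the BBF criterion (Bestvina--Bromberg--Fujiwara's WWPD results for projection complexes), since the only translates $Y$ with unbounded projection overlap with the $\phi$-axis are commensurated by a small subgroup. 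Taking two such pseudo-Anosov elements supported on disjoint copies of $K$-type subsurfaces (which exist since $\Sigma$ has infinite type and $K$ is displaceable off any finite-type piece once we pass to translates) yields two independent loxodromics, giving nonelementarity. I expect the main technical obstacle here to be the verification of the BBF projection axioms, in particular the finiteness axiom, which requires care about how translates of $K$ can be nested and a bounded geodesic image statement uniform over the (infinitely many) translates.

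For part~(2), I would argue contrapositively and structurally, using the Mann--Rafi framework \cite{MR} together with the CB-generation and tameness hypotheses. Suppose $\Map(\Sigma)$ acts continuously nonelementarily on a hyperbolic space; I want to produce a nondisplaceable finite-type subsurface. The strategy is: (a) since the action is continuous and $\Map(\Sigma)$ is CB-generated, a coarsely bounded generating set has bounded orbit, so the orbit map is a quasi-isometric embedding of $\Map(\Sigma)$ (with its CB word metric, i.e.\ the quasi-isometry type from Rosendal's theory) only up to the usual coarse geometry, and loxodromic elements must be ``big'' in the CB metric. (b) Use the structure theory of tame endspaces: Mann--Rafi give a decomposition of $\Sigma$ and a description of $\Map(\Sigma)$ in terms of a finite ``core'' together with self-similar/telescoping pieces, and when there is no nondisplaceable finite-type subsurface, every finite-type subsurface can be pushed off itself, which means the ``shift''-type homeomorphisms (handle shifts, or more general telescopes along the self-similar ends) are cofinal. (c) Show that such displacement forces every continuous isometric action to be, in a precise sense, ``supported at infinity'': concretely, one shows that $\Map(\Sigma)$ is boundedly generated by (conjugates of) CB subgroups each of which is contained in a stabilizer of a finite-type subsurface, and each such stabilizer acts elementarily (indeed with bounded orbits, or at worst fixing a point at infinity) because the pseudo-Anosov part can be absorbed using the displaceability to build distorted copies / infinite products killing any unbounded quasimorphism or loxodromic behavior. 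This is essentially the mechanism by which \cite{MR} shows these groups have no unbounded length functions coming from finite-type data; I would upgrade ``no unbounded length function'' arguments to ``no nonelementary hyperbolic action'' using the fact that a nonelementary action yields two independent loxodromics and hence (by \cite{MR}-type arguments) a nondisplaceable configuration, a contradiction. The main obstacle in part~(2) is step (c): ruling out \emph{all} nonelementary actions, not just unbounded orbits, requires controlling loxodromic elements and showing none can exist without a nondisplaceable finite-type subsurface; the cleanest route is probably to show directly that when no such subsurface exists, $\Map(\Sigma)$ is generated by a bounded number of conjugates of subgroups with bounded orbits in every continuous action on a hyperbolic space, which forces the whole action to have bounded orbits or to fix a point/pair at infinity, hence to be elementary.

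I expect part~(2) to be substantially harder and more delicate than part~(1); part~(1) is a relatively direct (if technically involved) application of the BBF construction and subsurface projections, whereas part~(2) requires a genuine classification/structure argument on tame infinite-type surfaces and a mechanism converting displaceability of all finite-type subsurfaces into the non-existence of nonelementary hyperbolic actions, which is where the bulk of the new work lies.
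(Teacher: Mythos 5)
Your part~(1) follows the paper's route almost exactly: the space $\mathbb{X}$ is the Bestvina--Bromberg--Fujiwara quasi-tree of metric spaces built from the curve graphs of the $\Map(\Sigma)$-translates of $K$, with subsurface projections defined thanks to nondisplaceability, the projection axioms checked by reduction to a finite-type subsurface containing the relevant translates (with constants uniform over all finite-type surfaces), and WWPD obtained from the BBF criterion applied to WPD pseudo-Anosovs acting on $\calc(\widehat{K})$. One slip: you propose to obtain two independent loxodromics from pseudo-Anosovs ``supported on disjoint copies of $K$-type subsurfaces,'' but nondisplaceability of $K$ means precisely that no translate of $K$ is disjoint from $K$, so such disjoint copies do not exist. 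The correct source of nonelementarity is two independent pseudo-Anosov mapping classes supported on the \emph{same} subsurface $K$, whose axes live in the geodesically convex copy of $\calc_\Sigma(K)$ inside $\mathbb{X}$.

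Part~(2) contains a genuine gap. Your step~(c) proposes to show that $\Map(\Sigma)$ is boundedly generated by conjugates of subgroups having bounded orbits in every continuous action, and to conclude that any such action must be elementary. That implication is false: a group generated by finitely many elliptic subgroups can perfectly well act nonelementarily on a hyperbolic space (for instance $\mathrm{PSL}_2(\mathbb{Z})\cong\mathbb{Z}/2\ast\mathbb{Z}/3$ acts nonelementarily on its Bass--Serre tree while being generated by two finite, hence elliptic, subgroups). Moreover, $\Map(\Sigma)$ cannot be boundedly generated by CB subgroups in this setting, since it surjects onto $\mathbb{Z}^n$ via shift homomorphisms and is not CB. The mechanism that actually closes the argument is \emph{normality} together with an \emph{abelian quotient}: one reduces to ``avenue surfaces'' with exactly two maximal ends, shows that the horizontally bounded mapping classes form a normal subgroup $\Bdd(\Sigma)$ of the index-two subgroup $\Map^0(\Sigma)$, each of whose elements lies in a CB subgroup (by displacing its support towards one of the two special ends), and constructs a split exact sequence $1\to N\to\Map^0(\Sigma)\to\mathbb{Z}^n\to 1$ with $N$ contained in the closure of $\Bdd(\Sigma)$. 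A normal subgroup all of whose elements act elliptically either has bounded orbits or acts horocyclically (hence yields a fixed boundary point), and combining this with the abelian quotient forces every continuous isometric action of $\Map^0(\Sigma)$, hence of $\Map(\Sigma)$, on a hyperbolic space to be elementary. Without identifying this normal-plus-abelian structure, your outline does not yield the conclusion.
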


The WWPD property that arises in the first statement of the theorem, introduced by Bestvina, Bromberg and Fujiwara in \cite{BBF0}, gives precisions on the dynamics of the action. A possible definition is as follows. Let $G$ be a group acting by isometries on a hyperbolic space $\mathbb{X}$, and let $g\in G$ be a loxodromic element for the action. Denote by $(g^{-\infty},g^{+\infty})$ the pair of fixed points of $g$ in the Gromov boundary of $\mathbb{X}$. One says that $g$ has the \emph{WWPD property} if the $G$-orbit of $(g^{-\infty},g^{+\infty})$ is discrete in $\partial_\infty\mathbb{X}\times\partial_\infty\mathbb{X}\setminus\Delta$, where $\Delta$ denotes the diagonal. The WWPD property is a weakening of Bestvina and Fujiwara's WPD property \cite{BF} that was proved to hold for the action of a finite-type mapping class group on its curve graph -- but one cannot hope to have WPD elements in the infinite-type case in view of work of Bavard and Genevois \cite{BG}. Having WWPD elements yields applications to bounded cohomology, as explained in Corollary~\ref{corintro:cohomology} below and the paragraph that follows. 

The space $\mathbb{X}$ constructed in the proof of the first part of the theorem is a quasi-tree of metric spaces in the sense of Bestvina, Bromberg and Fujiwara \cite{BBF}, where the pieces are the curve graphs of all subsurfaces of $\Sigma$ in the $\Map(\Sigma)$-orbit of $K$. In the case of finite-type subsurfaces, the same construction appears in the recent work of Clay, Mangahas and Margalit \cite{CMM}. The fact that $K$ is nondisplaceable is exactly the assumption one needs to define projection maps (as in the work of Masur and Minsky \cite{MM2}) between these various curve graphs. The axioms needed to apply the Bestvina--Bromberg--Fujiwara machinery (and specifically, the Behrstock inequality and the axiom on finiteness of large projections) are checked as in the case of finite-type surfaces. The important point that allows to extend the construction to infinite-type surfaces is that the constants that arise in the projection axioms are uniform over all finite-type surfaces. 

\begin{figure}
\centering
\input{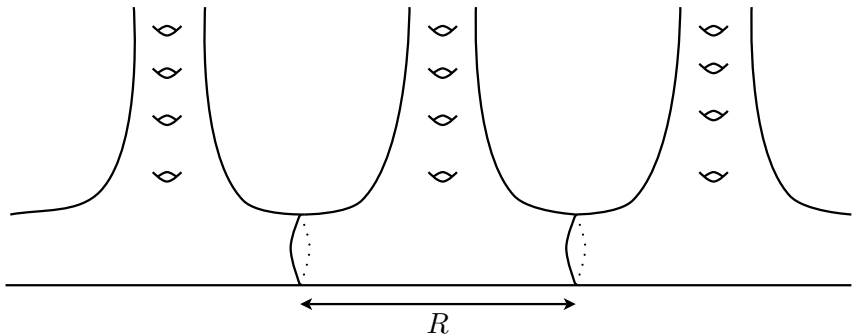}
\caption{An avenue of chimneys.}
\label{fig:chimneys}
\end{figure}

Let us now illustrate our proof of the second part of Theorem~\ref{theointro:hyperbolic-actions} on a concrete example, namely our surface $\Sigma$ is the \emph{avenue of chimneys} illustrated in Figure~\ref{fig:chimneys}. Assume that $\Map(\Sigma)$ acts continuously on a hyperbolic space $X$. The surface $\Sigma$ has two special ends, namely the leftmost and rightmost ends in the picture. The existence of these special ends is ensured in general by the assumptions made on $\Sigma$ together with work of Mann and the third named author \cite{MR}. Let $\Map^0(\Sigma)\subseteq\Map(\Sigma)$ be the finite-index subgroup made of mapping classes that preserve each of these two special ends, as opposed to permuting them. Then there is a shift homomorphism $\Map^0(\Sigma)\to\mathbb{Z}$, measuring the average displacement of chimneys to the right -- in general, one needs to consider finitely many shift homomorphisms. The kernel of this homomorphism to $\mathbb{Z}$ contains all mapping classes which are \emph{horizontally bounded}, i.e.\ supported on a subsurface that avoids a neighborhood of each of the two special ends (like $R$ in the picture). In fact all mapping classes in the kernel of the shift homomorphism are limits of mapping classes which are horizontally bounded. But every horizontally bounded mapping class can be displaced towards infinity, and by \cite{MR} this implies that it acts elliptically on $X$ -- in fact it acts with bounded orbits in any continuous isometric action of $\Map(\Sigma)$. Now we have a homomorphism to $\mathbb{Z}$ whose kernel is contained in the closure of a normal subgroup, all of whose elements act elliptically on $X$; it is then an exercise in actions on hyperbolic spaces to conclude that the $\Map(\Sigma)$-action on $X$ is elementary.

\subsection*{Applications to largeness properties}

We now discuss applications of our construction to algebraic properties of $\Map(\Sigma)$ in the presence of nondisplaceable subsurfaces of finite type. In fact we derive largeness properties for subgroups of $\Map(\Sigma)$ that contain sufficiently many elements acting as a pseudo-Anosov homeomorphism on a nondisplaceable subsurface of finite type.

Let $\Sigma$ be a connected, orientable surface, let $K\subseteq\Sigma$ be a subsurface of finite type, and let $G\subseteq\Map(\Sigma)$. We denote by $\Stab_G(K)$ the subgroup of $G$ made of all mapping classes that preserve the homotopy class of $K$. Denoting by $\widehat{K}$ a surface obtained from $K$ by gluing a once-punctured disk on every boundary component of $K$, every element of $\Stab_G(K)$ induces a mapping class of $\widehat{K}$. We say that a subgroup $G\subseteq\Map(\Sigma)$ is \emph{$K$-nonelementary} if $\Stab_{G}(K)$ contains two elements that induce independent pseudo-Anosov mapping classes of $\widehat{K}$. Our first application is to bounded cohomology.

\begin{corintro}\label{corintro:cohomology}
Let $\Sigma$ be a connected orientable surface, and let $K\subseteq\Sigma$ be a nondisplaceable subsurface. Let $G\subseteq\Map(\Sigma)$ be a $K$-nonelementary subgroup. 

Then the second bounded cohomology $H^2_b(G,\mathbb{R})$ contains an embedded copy of $\ell^1$.
\end{corintro}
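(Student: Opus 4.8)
The plan is to realise the $\ell^1$ inside the image of the space $\widetilde{Q}(G)$ of homogeneous quasimorphisms on $G$ modulo homomorphisms, using the exact sequence $0\to H^1(G;\mathbb{R})\to Q(G)\to H^2_b(G;\mathbb{R})\to H^2(G;\mathbb{R})$, under which $\widetilde{Q}(G)$ embeds onto the kernel $EH^2_b(G;\mathbb{R})$ of the comparison map; so it suffices to embed $\ell^1$ into $\widetilde{Q}(G)$. The input is Theorem~\ref{theointro:hyperbolic-actions}(1): since $K$ is a nondisplaceable subsurface of finite type, $\Map(\Sigma)$, hence $G$, acts (continuously, but this plays no role for bounded cohomology of the abstract group) by isometries on the hyperbolic quasi-tree $\mathbb{X}$ assembled from the curve graphs of the $\Map(\Sigma)$-translates of $K$, and every mapping class preserving $[K]$ and restricting to a pseudo-Anosov on $K$ has the WWPD property for this action. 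As $G$ is $K$-nonelementary, $\Stab_G(K)$ contains $g_1,g_2$ inducing independent pseudo-Anosov mapping classes of $\widehat{K}$; by Masur--Minsky these act loxodromically on the curve graph of $K$, and, being elements stabilising the corresponding piece of $\mathbb{X}$ and acting loxodromically on it, they act loxodromically on $\mathbb{X}$ itself, with distinct fixed-point pairs in $\partial_\infty\mathbb{X}$. Moreover they have the WWPD property with respect to the $G$-action, since discreteness of the orbit of the fixed-point pair in $\partial_\infty\mathbb{X}\times\partial_\infty\mathbb{X}\setminus\Delta$ is inherited from the $\Map(\Sigma)$-action.

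Next I would feed this configuration into the quasimorphism machinery of Bestvina--Bromberg--Fujiwara for WWPD elements. The point is that an action of $G$ on a hyperbolic space possessing a loxodromic WWPD element $g$ whose fixed-point pair is not $G$-invariant (here ensured because $g_1$ and $g_2$ have distinct fixed-point pairs) yields, via a Brooks-type counting construction along the axis of a suitable word $w$ in $g_1,g_2$ with good ping-pong behaviour, an uncountable family of homogeneous quasimorphisms $\phi_{\bar n}$, indexed by bounded integer sequences $\bar n$, recording occurrences of the pattern coded by $\bar n$; the WWPD property is precisely what forces the relevant translates of the axis of $w$ to fellow-travel for uniformly bounded time, which makes these counting functions genuine quasimorphisms and makes $\bar n\mapsto\phi_{\bar n}$ an embedding of $\ell^1$ into $Q(G)$ that is bounded below. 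Passing, if necessary, to a subspace of quasimorphisms normalised to vanish on a fixed element and unbounded along independent directions, this descends to a bounded-below embedding $\ell^1\hookrightarrow\widetilde{Q}(G)\hookrightarrow EH^2_b(G;\mathbb{R})\subseteq H^2_b(G;\mathbb{R})$, which is the conclusion.

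The step I expect to be the main obstacle is the second one: verifying that the WWPD hypothesis alone -- without the properness built into WPD, which is genuinely unavailable here in view of \cite{BG} -- suffices both to run the counting construction in the quasi-tree $\mathbb{X}$ and to control the mutual interdependence of the resulting quasimorphisms tightly enough to extract an isometrically (or at least bi-Lipschitz) embedded $\ell^1$, rather than merely an infinite-dimensional subspace. This is exactly where the full strength of the WWPD axiom (as opposed to mere nonelementariness) enters, and it is the content of the cited work of Bestvina--Bromberg--Fujiwara; the new contribution here is only the production of the WWPD elements $g_1,g_2$ through Theorem~\ref{theointro:hyperbolic-actions}(1), after which the corollary is an application of existing technology. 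A self-contained treatment would instead require re-deriving the small-cancellation estimates for WWPD axes in $\mathbb{X}$ and the $\ell^1$-lower bound on the family $\{\phi_{\bar n}\}$.
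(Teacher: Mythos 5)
Your proposal is correct and follows essentially the same route as the paper: produce the nonelementary action on the quasi-tree $\mathbb{X}$ with loxodromic WWPD elements via Theorem~\ref{theo:big-bbf}, then convert this into an embedded $\ell^1$ in $H^2_b(G,\mathbb{R})$ by existing quasimorphism technology. The only difference is that the paper delegates the second step to Handel--Mosher \cite[Theorem~2.10]{HM}, which is stated precisely under the hypotheses you arrange (a nonelementary isometric action on a hyperbolic space with at least one loxodromic WWPD element), so the obstacle you flag --- whether WWPD alone, without WPD, suffices to run the counting construction and get a bounded-below $\ell^1$ --- is exactly what that citation resolves.
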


This follows from a theorem of Handel and Mosher \cite[Theorem~2.10]{HM}, as Theorem~\ref{theointro:hyperbolic-actions} ensures that $G$ has a nonelementary action on a hyperbolic graph with at least one WWPD element. Corollary~\ref{corintro:cohomology} extends a theorem of Bestvina and Fujiwara for finite-type surfaces \cite{BF}, as well as earlier results of Bavard \cite[Théorème~4.8]{Bav} (answering a conjecture of Calegari \cite{Cal}), Bavard and Walker \cite[Theorem~9.1.1]{BW} and Rasmussen \cite[Corollary~1.2]{Ras} in the infinite-type setting. 

Our next application provides further largenss properties of $\Map(\Sigma)$. We state it here for the group $\Map(\Sigma)$ itself, but in fact the same statement holds true for many interesting subgroups of $\Map(\Sigma)$, and we refer to Theorem~\ref{theo:sq} for the full statement. We mention that the first conclusion of Theorem~\ref{theointro:largeness} partially answers a question raised by McLeay in \cite{McL}, by showing that the existence of a nondisplaceable subsurface of finite type in $\Sigma$ implies the existence of nongeometric normal subgroups of $\Map(\Sigma)$, i.e.\ normal subgroups whose automorphism group is not equal to the extended mapping class group of $\Sigma$.

\begin{theointro}\label{theointro:largeness}
Let $\Sigma$ be a connected orientable surface of infinite type which contains a nondisplaceable subsurface of finite type. Then
\begin{enumerate}
\item $\Map(\Sigma)$ contains a normal nonabelian free subgroup;
\item $\Map(\Sigma)$ contains uncountably many normal subgroups;
\item every countable group embeds in a quotient of $\Map(\Sigma)$.
\end{enumerate}
\end{theointro}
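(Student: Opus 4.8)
The plan is to derive Theorem~\ref{theointro:largeness} from the combination of Theorem~\ref{theointro:hyperbolic-actions}(1) and the machinery of Dahmani, Guirardel and Osin on groups acting on hyperbolic spaces with WWPD (more precisely, WPD-type) elements. First I would fix a nondisplaceable subsurface $K\subseteq\Sigma$ of finite type. Since $K$ has finite type, by Masur--Minsky the curve graph of $\widehat{K}$ is hyperbolic and admits pseudo-Anosov mapping classes acting loxodromically on it; in particular one can find an element $g\in\Map(\Sigma)$ which preserves the isotopy class of $K$ and restricts to a pseudo-Anosov on $\widehat{K}$ (extend a pseudo-Anosov of $\widehat{K}$ by the identity outside $K$, possibly after passing to a power so that the boundary curves are fixed). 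By Theorem~\ref{theointro:hyperbolic-actions}(1), such a $g$ acts loxodromically on the hyperbolic space $\mathbb{X}$ and has the WWPD property with respect to the $\Map(\Sigma)$-action. So $\Map(\Sigma)$ contains at least one WWPD element acting on a hyperbolic space, and the action is nonelementary.

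The key step is then to invoke the theory of hyperbolically embedded subgroups and acylindrical-type techniques. Concretely, I would argue that $\langle g\rangle$ (or a suitable finite collection of such loxodromics) is a hyperbolically embedded virtually cyclic subgroup of $\Map(\Sigma)$ — or, if a full WPD statement is not available, use the weaker ``WWPD + nonelementary action'' input together with Dahmani--Guirardel--Osin's small cancellation theory, as the authors explicitly advertise in the abstract. From such a WWPD element acting on a nonelementary hyperbolic action, one extracts: (i) by rotating-family / small-cancellation quotients, a normal nonabelian free subgroup of $\Map(\Sigma)$ — one takes the normal closure of a sufficiently high power $g^N$, which by the DGO machinery is free of infinite rank (this gives conclusion~(1)); (ii) by taking further small-cancellation quotients $\Map(\Sigma)/\langle\langle g^{N_i}\rangle\rangle$ indexed over an uncountable family of sequences $(N_i)$, a continuum of distinct normal subgroups, hence conclusion~(2); (iii) by the SQ-universality statement of Dahmani--Guirardel--Osin for groups with a proper hyperbolically embedded subgroup — or more directly, by combining a nonabelian free normal subgroup with the fact that free groups are SQ-universal and taking preimages — that every countable group embeds in a quotient of $\Map(\Sigma)$, giving conclusion~(3).

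More precisely for (3): since $\Map(\Sigma)$ contains a normal free subgroup $F$ (conclusion (1)), and $F$ is SQ-universal, any countable group $Q$ embeds in a quotient $F/N$ with $N\trianglelefteq F$; one then needs $N$ to be (or to be enlarged to) a normal subgroup of $\Map(\Sigma)$ with controlled intersection, which is exactly the kind of statement the DGO framework supplies via the surjection $\Map(\Sigma)\to\Map(\Sigma)/\langle\langle N\rangle\rangle$ and an analysis of the image of $F$. Alternatively one invokes directly the theorem of Dahmani--Guirardel--Osin that a group admitting a proper infinite hyperbolically embedded subgroup is SQ-universal, which immediately yields (2) and (3) at once.

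The main obstacle I anticipate is the passage from the WWPD property (which is all Theorem~\ref{theointro:hyperbolic-actions}(1) guarantees, and which is genuinely weaker than WPD — indeed Bavard--Genevois show full WPD elements cannot exist here) to the hyperbolically-embedded / small-cancellation input that DGO's results require. One cannot simply quote ``WPD $\Rightarrow$ hyperbolically embedded''; instead one must check that the relevant rotating-family or very-rotating-family axioms still hold with only the WWPD hypothesis, using the explicit quasi-tree-of-spaces structure of $\mathbb{X}$ coming from the Bestvina--Bromberg--Fujiwara construction (the projection complex gives good control on how translates of the axis of $g$ interact, which should substitute for the missing WPD condition). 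Verifying that this weaker package is enough to run the DGO small-cancellation arguments — producing infinitely-presented free normal subgroups and the SQ-universality conclusion — is the technical heart of the argument; the rest is formal.
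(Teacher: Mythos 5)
Your proposal correctly identifies the ingredients (the quasi-tree $\mathbb{X}$, the WWPD element, and Dahmani--Guirardel--Osin small cancellation) and, to your credit, you explicitly flag the main difficulty: WWPD is strictly weaker than WPD, so one cannot simply quote the ``WPD $\Rightarrow$ hyperbolically embedded $\Rightarrow$ SQ-universal'' pipeline. But your proposal stops exactly there -- you assert that the projection-complex structure ``should substitute for the missing WPD condition'' and call this the technical heart without supplying it. That is the gap, and the way the paper closes it is structurally different from what you sketch: rather than trying to verify rotating-family axioms for the pair $(\langle g^N\rangle, A_g)$ on $\mathbb{X}$ with only WWPD in hand, the paper applies the DGO theorem \emph{twice, at two different levels}. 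First, inside the finite-type world: pseudo-Anosov elements act on $\calc(\widehat K)$ with the genuine WPD property (Bestvina--Fujiwara), so small cancellation applied to the $\Stab_G(K)$-action on $\calc_\Sigma(K)$ produces a normal subgroup $N\unlhd\Stab_G(K)$ that is free, supported on $K$, and all of whose nontrivial elements have translation length at least $L$ on the curve graph. Second, on the quasi-tree $\mathbb{X}$: the moving pair is \emph{not} a cyclic group with its axis but the pair $(N,\calc_\Sigma(K))$, where $\calc_\Sigma(K)$ is geodesically convex in $\mathbb{X}$ and distinct translates $\calc_\Sigma(K)$, $t\,\calc_\Sigma(K)$ have overlap uniformly bounded by the BBF constant $D$; since every element of $N$ moves points of $\calc_\Sigma(K)$ a lot, the $(A,\epsilon)$-condition is verified directly, with no WPD-type hypothesis needed at this stage. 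This restructuring is precisely what makes the argument go through despite the absence of WPD elements for the $\Map(\Sigma)$-action.

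Two further concrete problems with your plan. For conclusion~(2), taking normal closures $\langle\langle g^{N_i}\rangle\rangle$ over ``an uncountable family of sequences'' does not obviously produce uncountably many distinct normal subgroups (there are only countably many powers of $g$, and normal closures of different subsets can coincide); the paper instead deduces~(2) formally from the SQ-universality statement, by noting that the uncountably many $2$-generated groups all embed in quotients $F/(F\cap N)$ of a fixed rank-$2$ free subgroup $F$, forcing $F\cap N$ to take uncountably many values. For conclusion~(3), your ``take preimages'' step needs the normal closure of $N\unlhd F$ in $\Map(\Sigma)$ to meet $F$ exactly in $N$; this requires producing $F$ together with an almost convex set $Q_F\subseteq\calc_\Sigma(K)$ whose stabilizer is essentially $F$ (Proposition~\ref{prop:small-cancellation-free-subgroup}), which in turn uses the WPD property on $\calc(\widehat K)$ and a symmetryless pseudo-Anosov ($\mathcal{E}(g)=\langle g\rangle$) -- hypotheses your sketch never secures. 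So the skeleton is right, but the load-bearing steps are exactly the ones left unproved.
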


These statements were proved by Dahmani, Guirardel and Osin for finite-type surfaces \cite{DGO}; their techniques were recently developed by Clay, Mangahas and Margalit \cite{CMM} to find normal non-free right-angled Artin subgroups in the mapping class group. It is interesting that, in the infinite-type case, we still manage to get similar conclusions, despite $\Map(\Sigma)$ not being acylindrically hyperbolic \cite{BG}. We mention that the uncountably many normal subgroups we produce are all countable, made of mapping classes supported on finite-type subsurfaces. 

Our proof of Theorem~\ref{theointro:largeness} has similarities with the aforementioned work of Clay, Mangahas and Margalit -- in fact the first statement can probably be deduced from \cite[Theorem~1.6]{CMM}. It relies on the geometric small cancellation tools that were developed by Dahmani, Guirardel and Osin in \cite{DGO}, applied twice: once within the curve graph of a finite-type nondisplaceable subsurface, and once within the quasi-tree of metric spaces constructed in the proof of Theorem~\ref{theointro:hyperbolic-actions}. 

It is worth pointing out that allowing for quotients in the last conclusion of Theorem~\ref{theointro:largeness} is crucial, and it is not true in general that every countable group embeds in $\Map(\Sigma)$. In the case where $\Sigma$ is the plane minor a Cantor set, Calegari and Chen proved in \cite[Theorem~5.1]{CC} that a countable group $\Gamma$ embeds in $\Map(\Sigma)$ if and only if $\Gamma$ is circularly orderable. 

We also observe that the existence of nontrivial normal free subgroups, provided by the first conclusion of Theorem~\ref{theointro:largeness}, is in fact a characterization of the existence of a nondisplaceable subsurface of finite type.

\begin{theointro}\label{theointro:free-normal}
Let $\Sigma$ be a connected orientable surface of infinite type. Then $\Map(\Sigma)$ contains a nontrivial normal free subgroup if and only if $\Sigma$ contains a nondisplaceable subsurface of finite type.
\end{theointro}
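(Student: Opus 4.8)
The ``if'' direction is immediate from the first assertion of Theorem~\ref{theointro:largeness}: if $\Sigma$ contains a nondisplaceable subsurface of finite type, then $\Map(\Sigma)$ contains a normal nonabelian free subgroup, which is in particular a nontrivial normal free subgroup. For the ``only if'' direction I plan to argue by contraposition. So assume $\Sigma$ contains no nondisplaceable subsurface of finite type, and let $N\trianglelefteq\Map(\Sigma)$ be a nontrivial normal subgroup; it suffices to exhibit a non-cyclic abelian subgroup of $N$, since every abelian subgroup of a free group is trivial or infinite cyclic. The first step is to produce a nontrivial element of $N$ supported on a finite-type subsurface. Pick $1\neq g\in N$. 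Since the action of $\Map(\Sigma)$ on isotopy classes of essential, non-peripheral simple closed curves is faithful (a consequence of the Alexander method for infinite-type surfaces), there is such a curve $c$ with $g(c)\neq c$, and then $g_0:=[g,T_c]=g\,(T_cg^{-1}T_c^{-1})$ lies in $N$ by normality, equals $T_{g(c)}T_c^{-1}$, is nontrivial because $T_{g(c)}\neq T_c$, has infinite order, and is supported on a regular neighborhood of $c\cup g(c)$.

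The second step uses the hypothesis. Choose a connected, essential, finite-type subsurface $K$ containing $c\cup g(c)$ and such that the inclusion induces an injection $\Map(K)\hookrightarrow\Map(\Sigma)$ (for instance $K$ with essential, pairwise non-isotopic boundary curves); then $g_0$ is supported on $K$. Since $K$ has finite type, by hypothesis it is displaceable, so there is $h\in\Map(\Sigma)$ with $h(K)\cap K=\emptyset$. The element $g_0':=hg_0h^{-1}\in N$ is then supported on $h(K)$, which is disjoint from $K$; hence $g_0$ and $g_0'$ commute, and being nontrivial of infinite order with disjoint essential finite-type supports, the injection $\Map(K)\times\Map(h(K))\hookrightarrow\Map(\Sigma)$ gives $\langle g_0,g_0'\rangle\cong\langle g_0\rangle\times\langle g_0'\rangle\cong\mathbb{Z}^2$. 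Thus $N$ contains a copy of $\mathbb{Z}^2$ and is not free, which completes the contrapositive and hence the theorem.

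The one delicate point is the first step: extracting, from an arbitrary nontrivial element of $N$, a nontrivial element with finite-type support -- for which the identity $[g,T_c]=T_{g(c)}T_c^{-1}$ is the crucial device, combined with faithfulness of the curve-graph action to guarantee $g(c)\neq c$. Everything else relies on standard facts (displaceability of finite-type subsurfaces under our hypothesis, injectivity of the homomorphism induced by an essential subsurface inclusion on mapping class groups, and the structure of abelian subgroups of free groups), together with the routine bookkeeping of choosing the subsurface $K$ carrying $g_0$ so that its disjoint conjugate $g_0'$ yields an honest $\mathbb{Z}^2$ rather than a cyclic quotient.
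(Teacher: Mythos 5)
Your proposal is correct and follows essentially the same route as the paper: the ``if'' direction is quoted from Theorem~\ref{theointro:largeness}, and for the ``only if'' direction the paper likewise takes a nontrivial $g\in N$, finds a Dehn twist $T_c$ not commuting with $g$ (via faithfulness of the action on curves), forms the commutator $[g,T_c]=T_{g(c)}T_c^{-1}\in N$ supported on a finite-type subsurface, displaces that subsurface, and obtains a non-cyclic abelian subgroup of $N$. The only cosmetic difference is that you make the identity $[g,T_c]=T_{g(c)}T_c^{-1}$ and the infinite order of $g_0$ explicit, whereas the paper only needs that the two commuting conjugates generate a non-cyclic abelian group.
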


The only if statement is proved as follows. Assume that $\Sigma$ contains no nondisplaceable subsurface of finite type. Given a normal subgroup $N\unlhd\Map(\Sigma)$, we show that we can always find a commutator of the form $k=(ghg^{-1})h^{-1}=g(hg^{-1}h^{-1})$ with $g\in N$ and $h$ finitely supported which is nontrivial. Such an element $k$ belongs to $N$, and is finitely supported, being the product of the finitely supported elements $ghg^{-1}$ and $h$. As we are assuming that $\Sigma$ has no nondisplaceable subsurface of finite type, the support of $k$ is displaceable off itself by a mapping class $\eta$, and $k$ and $\eta k\eta^{-1}$ generate a noncyclic abelian subgroup of $N$ -- in particular $N$ is not free. Details are given in Section~\ref{sec:normal-free-subgroups}.

\paragraph*{Organization of the paper.} 
Section~\ref{sec:prelims} collects background material regarding surfaces of infinite type and group actions on hyperbolic spaces. In Section~\ref{sec:nondisplaceable}, we study the case of surfaces that have a nondisplaceable subsurface of finite type and establish the first half of Theorem~\ref{theointro:hyperbolic-actions}; we also give our application to bounded cohomology of subgroups of $\Map(\Sigma)$. The proofs of Theorems~\ref{theointro:largeness} and~\ref{theointro:free-normal}, which study normal subgroups of $\Map(\Sigma)$ are given in Section~\ref{sec:largeness}. Finally, Section~\ref{sec:displaceable} is concerned with surfaces having no nondisplaceable subsurface of finite type: we prove the second half of Theorem~\ref{theointro:hyperbolic-actions}.  

\paragraph*{Acknowledgments.} 
We are very grateful to Dan Margalit for pointing us the connections between the present work and his work with Clay and Mangahas on normal right-angled Artin subgroups of mapping class groups.

The first named author acknowledges support from the Agence Nationale de la Recherche under Grant ANR-16-CE40-0006 DAGGER. The third named author was partially supported by the Discovery grants from the Natural Sciences and Engineering Research Council of Canada (RGPIN 06486).

\setcounter{tocdepth}{2}
\tableofcontents

\section{General background}\label{sec:prelims}

\subsection{Surfaces}

A \emph{surface} is a (boundaryless) $2$-dimensional topological manifold, i.e.\ a second-countable Hausdorff space $\Sigma$ such that every point in $\Sigma$ has an open neighborhood homeomorphic to an open subset of $\mathbb{R}^2$. The \emph{mapping class group} of a connected, orientable surface $\Sigma$ is defined as the group $\Map(\Sigma)$ of all isotopy classes of orientation-preserving homeomorphisms of $\Sigma$. The group $\Map(\Sigma)$ is equipped with the quotient topology of the compact-open topology on the group $\Homeo^+(\Sigma)$ of all orientation-preserving homeomorphisms of $\Sigma$.

Given a connected, orientable surface $\Sigma$, we let $g(\Sigma)$ be the genus of $\Sigma$ (possibly infinite), we let $E(\Sigma)$ be the end space of $\Sigma$ and $E^g(\Sigma)\subseteq E(\Sigma)$ be the subspace made of ends that are accumulated by genus (\emph{non-planar} in the terminology from \cite{Ric}). By a theorem of Richards \cite{Ric}, connected, orientable surfaces $\Sigma$ are classified up to homeomorphism by the triple $(g(\Sigma),E(\Sigma),E^g(\Sigma))$. 

A surface $\Sigma$ is \emph{of finite type} if it has finitely many connected components and the fundamental group of every such connected component is finitely generated. When $\Sigma$ is connected, we defined the \emph{complexity} of $\Sigma$ as $\xi(\Sigma)=3g(\Sigma)+|E(\Sigma)|-3$. Thus $\Sigma$ is of finite type if and only if $\xi(\Sigma)<+\infty$.

A \emph{bordered surface} is a topological space obtained from a surface $\Sigma$ by removing finitely many pairwise disjoint open disks. It is \emph{of finite type} if the surface $\Sigma$ can be chosen to be of finite type.

A \emph{subsurface} of a surface $\Sigma$ is a closed subset of $\Sigma$ whose boundary consists in a finite number of pairwise nonintersecting simple closed curves, such that none of these boundary curves bounds a disk or encloses an end of $\Sigma$. Every subsurface of $\Sigma$ is naturally a bordered surface; we say that a subsurface of $\Sigma$ is \emph{of finite type} if so is the corresponding bordered surface.

Given a subsurface $R\subseteq\Sigma$, the endspace of $R$ naturally embeds into the endspace of $\Sigma$; we let $\Ends(R)\subseteq E(\Sigma)$ be the image of this embedding.

As a matter of fact, every surface $\Sigma$ can be exhausted by an increasing sequence of subsurfaces of finite type, which in addition can be chosen to be connected. In particular every compact subset of $\Sigma$ is contained in a subsurface of $\Sigma$ of finite type.

Every subsurface $K\subseteq\Sigma$ of finite type determines a partition $\Pi_K$ of the ends of $\Sigma$. Given two subsets $X,Y\subseteq E(\Sigma)$, we say that $K$ \emph{separates} $X$ and $Y$ if $X$ and $Y$ belong to distinct subsets of the partition $\Pi_K$. 

\begin{lemma}\label{lemma:partition-ends}
Let $\Sigma$ be a connected orientable surface, and let $E$ be the end space of $\Sigma$. Let $X_1,\dots,X_k$ be finitely many pairwise disjoint closed subsets of $E$.

Then there exists a $k$-holed sphere $K\subseteq\Sigma$ that pairwise separates $X_1,\dots,X_k$.
\end{lemma}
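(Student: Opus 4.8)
The plan is to separate the ends combinatorially first, and then realize the separation by a subsurface of the correct topological type. Recall that $E$, being the end space of a surface, is compact, Hausdorff, metrizable and totally disconnected, so its topology has a basis of clopen sets. Since $X_1,\dots,X_k$ are pairwise disjoint closed subsets of such a space, a standard argument (normality together with total disconnectedness) produces pairwise disjoint clopen subsets $U_1,\dots,U_k\subseteq E$ with $X_i\subseteq U_i$; absorbing the remaining clopen set $E\setminus(U_1\cup\dots\cup U_k)$ into $U_1$, I may assume that $\{U_1,\dots,U_k\}$ is a clopen partition of $E$. It then suffices to produce a $k$-holed sphere $K\subseteq\Sigma$ whose induced partition $\Pi_K$ is exactly $\{U_1,\dots,U_k\}$: since each $X_i$ lies in the distinct part $U_i$, such a $K$ separates the $X_i$ pairwise. (For $k=1$ there is nothing to separate, so I take $k\geq 2$, which guarantees each $U_i$ is a proper nonempty subset of $E$.)

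Next I realize the partition by a finite-type subsurface. Using the exhaustion of $\Sigma$ by connected finite-type subsurfaces, I choose one, say $S$, large enough that the partition of $E$ induced by the complementary components of $S$ refines $\{U_1,\dots,U_k\}$; this is possible because the clopen sets $\Ends(C)$, as $C$ ranges over complementary components of larger and larger finite-type subsurfaces, form a basis of the topology of $E$, and each $U_i$ is clopen. Grouping the complementary components of $S$ according to which $U_i$ their ends fall into yields, for each $i$, a (possibly disconnected) complementary region whose end set is exactly $U_i$. At this stage $S$ separates the $U_i$, but $S$ need not be a $k$-holed sphere: it may carry genus, and it may border a given part along several boundary circles.

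The main work, and the main obstacle, is to upgrade $S$ to a $k$-holed sphere without disturbing the induced end partition, which I do by two types of surgery. First, I eliminate genus: by drawing the separating curves far enough out toward the ends, all but finitely many handles of $\Sigma$ (those accumulating onto the non-planar ends of $E^g$) lie in the complementary regions, and the finitely many handles remaining in the compact core can be pushed into an adjacent complementary region by a tube surgery, making the core planar. Second, I reduce the number of boundary circles bordering each part to one: whenever a part $U_i$ is bordered by two distinct boundary circles $\gamma,\gamma'$ of the core, the corresponding complementary region is connected, so I join $\gamma$ and $\gamma'$ by an arc through that region and attach a band along it (a $1$-handle connecting two distinct boundary circles), which merges $\gamma$ and $\gamma'$ into a single circle while changing neither the genus of the core nor the end set of the complementary region. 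Iterating these moves produces a connected, genus-$0$ subsurface $K$ with exactly $k$ boundary circles, one per part, so that $\Sigma\setminus K$ has exactly the $k$ complementary regions with end sets $U_1,\dots,U_k$; thus $\Pi_K=\{U_1,\dots,U_k\}$. The points requiring care are that each surgery preserves connectedness of the core and of the relevant complementary region and never merges two distinct parts (automatic, since every band is attached along an arc contained in a single complementary region), and that the resulting boundary curves are essential, i.e.\ bound neither disks nor once-punctured disks; the latter is arranged by routing genus or additional ends into each complementary region, which is possible in the infinite-type setting relevant here.
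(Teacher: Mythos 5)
Your overall route --- produce a clopen partition, take a finite-type core $S$ whose complementary components refine it, then surger $S$ into a $k$-holed sphere --- is genuinely different from the paper's, which never surgers inside $\Sigma$ at all: it builds an abstract model by gluing, onto a $k$-holed sphere, bordered surfaces $\Sigma_i$ with one boundary circle, end space $Y_i$, and the correct genus and non-planar ends, and then invokes Richards' classification to identify the model with $\Sigma$, transporting the $k$-holed sphere into $\Sigma$. Your first two steps are fine, but your boundary-merging step contains two genuine errors. First, you assert that when a part $U_i$ is bordered by two distinct boundary circles $\gamma,\gamma'$ of the core, ``the corresponding complementary region is connected.'' This is false in general --- indeed you yourself noted two sentences earlier that the grouped region may be disconnected: $\gamma$ and $\gamma'$ may bound two \emph{different} complementary components of $S$ whose end sets both lie in $U_i$, and then there is no arc through the region joining them, so the band cannot be attached at all. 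This is exactly the configuration your procedure must resolve (it is the reason $S$ is not already the desired $K$), so the gap is not cosmetic. Second, even when the region is connected, attaching a $1$-handle whose two feet lie on distinct boundary circles does \emph{not} preserve the genus of the core: it decreases $\chi$ by one and the number of boundary components by one, hence (by $\chi = 2-2g-b$) increases the genus by one. So each merge undoes your genus-elimination step, and the procedure as written never terminates with a planar core.

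Both problems disappear if you dualize the move: instead of attaching a band through the complement, remove a strip from the core, i.e.\ cut $S$ along a properly embedded arc \emph{in $S$} joining $\gamma$ to $\gamma'$. Such an arc exists because $S$ is connected, and it is automatically non-separating (a push-in of $\gamma$ to the interior meets it exactly once, and every closed curve must cross a separating arc an even number of times), so the cut core stays connected; cutting along an arc joining two distinct boundary circles preserves genus and drops the boundary count by one; and the complement gains the strip, which merges the two complementary regions adjacent to $\gamma$ and $\gamma'$ (or joins one region to itself) without changing any end sets. Combined with your handle-removal step --- which is correct as stated, since deleting from $S$ a neighborhood of a handle plus an arc to the boundary pushes that genus into a complementary region without touching its ends --- this yields the desired $k$-holed sphere. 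One last remark: your closing claim that essentialness of the boundary curves ``is arranged by routing genus or additional ends'' into the regions cannot always be realized (e.g.\ when some $U_i$ is a single isolated planar end of a planar surface), but the paper's own construction has the identical looseness, so this is a defect of the statement's generality rather than of your argument relative to the paper's.
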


\begin{proof}
We can find a clopen partition $E=Y_1\dunion\dots\dunion Y_k$ such that for every $i\in\{1,\dots,k\}$, one has $X_i\subseteq Y_i$. It is therefore enough to show that there exists a $k$-holed sphere in $\Sigma$ that pairwise separates $Y_1,\dots,Y_k$. For every $i\in\{1,\dots,k\}$, it follows from \cite{Ric} that there exists a bordered surface $\Sigma_i$ with a single boundary component whose endspace is homeomorphic to $Y_i$, so that the subspace made of ends accumulated by genus is homeomorphic to $Y_i\cap E^g$. In addition we can ensure that the sum of the genera of the surfaces $\Sigma_i$ is equal to the genus of $\Sigma$. Gluing a $k$-holed sphere along the boundary components of the surfaces $\Sigma_i$ thus yields a surface which is homeomorphic to $\Sigma$ by the classification of surfaces \cite{Ric}, and the lemma follows. 
\end{proof}

\subsection{Nondisplaceable subsurfaces}

A key concept in the work of Mann and the third named author \cite{MR}, which is also central in the present work, is that of a \emph{nondisplaceable} subsurface of $\Sigma$ -- most specifically, the important point is whether $\Sigma$ contains nondisplaceable subsurfaces of finite type. As in \cite[Definition~1.8]{MR}, we say that a connected subsurface $K\subseteq\Sigma$ is \emph{nondisplaceable} if for every $\phi\in\Homeo(\Sigma)$, one has $\phi(K)\cap K\neq\emptyset$. This definition can be extended to disconnected subsurfaces of $\Sigma$ in the following way (see \cite[Definition~2.7]{MR}): a subsurface $K\subseteq\Sigma$ is \emph{nondisplaceable} if for every $\phi\in\Homeo(\Sigma)$ and every connected component $K_1$ of $K$, there exists a connected component $K_2$ of $K$ such that \[\phi(K_1)\cap K_2\neq\emptyset.\] Notice that if $K\subseteq K'$ are two subsurfaces with $K$ nondisplaceable and $K'$ connected, then $K'$ is nondisplaceable. In particular, whenever a connected orientable surface $\Sigma$ contains a nondisplaceable subsurface of finite type, it actually contains one which is connected (since every connected surface has an exhaustion by connected subsurfaces of finite type).  

\subsection{Hyperbolic actions and the WWPD property}

We assume the reader to be familiar with basics on hyperbolic spaces in the sense of Gromov \cite{Gro} and isometric group actions on those. 

An isometric action of a group $G$ on a hyperbolic space $X$ is \emph{nonelementary} if $G$ has unbounded orbits in $X$ and does not have any finite orbit in the Gromov boundary $\partial_\infty X$ -- the terminology we use here departs from Gromov's \cite{Gro}, where this was called an action \emph{of general type}, but seems to prevail in the current literature. 

In the present paper, we will consider certain dynamical properties of isometric group actions on hyperbolic metric spaces. In particular, we will make use of the WWPD property, introduced by Bestvina, Bromberg and Fujiwara in \cite{BBF0} as a weakening of the WPD property introduced by Bestvina and Fujiwara in \cite{BF}. A possible definition is the following (see \cite[Proposition~2.3]{HM} for its equivalence with the original definition): given an isometric action of a group $G$ on a hyperbolic space $X$, an element $g\in G$ is \emph{WWPD} with respect to the $G$-action on $X$ if $g$ is loxodromic and the $G$-orbit of $(g^{-\infty},g^{+\infty})$ is a discrete subspace of $(\partial_\infty X\times\partial_\infty X)\setminus\Delta$ -- where $\Delta$ denotes the diagonal in $\partial_\infty X\times\partial_\infty X$. It is \emph{WPD} if in addition, the $G$-stabilizer of the pair $(g^{-\infty},g^{+\infty})$ is virtually cyclic (see \cite[Corollary~2.4]{HM}).

\section{Nondisplaceable subsurfaces and hyperbolic actions}\label{sec:nondisplaceable}

Let $\Sigma$ be an orientable surface. In this section, we prove that whenever $\Sigma$ contains a nondisplaceable subsurface $K$ of finite type, then $\Map(\Sigma)$ admits a nonelementary continuous action on a hyperbolic space. In addition this action can be constructed so that elements of $\Map(\Sigma)$ that restrict to pseudo-Anosov mapping classes on $K$ have the WWPD property with respect to the action. As a consequence, using a theorem of Handel and Mosher \cite{HM}, we deduce in Section~\ref{sec:cohomology} that the second bounded cohomology $H^2_b(\Map(\Sigma),\mathbb{R})$ is infinite-dimensional, and in fact contains an embedded copy of $\ell^1$.

\subsection{Curves, homotopies and mapping class groups}

\subsubsection{Curve graphs}\label{sec:curve-graph}

A simple closed curve on a surface $\Sigma$ is \emph{essential} if it does not bound a disk or a once-punctured disk. Let $\Sigma$ be a surface, and let $K\subseteq\Sigma$ be a subsurface of finite type. We let $\calc_{\Sigma}(K)$ be the graph whose vertices are the isotopy classes of simple closed curves on $\Sigma$ that have a representative contained in $K$ which is essential in $K$ (in particular not homotopic to one of the boundary curves of $K$), where two distinct isotopy classes are joined by an edge if they have disjoint representatives in $\Sigma$. As such the graph $\calc_{\Sigma}(K)$ is an induced subgraph of the curve graph $\calc(\Sigma)$ of $\Sigma$ -- i.e.\ the vertex set of $\calc_\Sigma(K)$ is a subset of $\calc(\Sigma)$, and two vertices are joined by an edge in $\calc_\Sigma(K)$ if and only if they are joined by an edge in $\calc(\Sigma)$. Viewing $K$ as a bordered surface, we can also consider its curve graph $\calc(K)$; the following two lemmas show that the inclusion $K\subseteq \Sigma$ induces an inclusion of $\calc(K)$ into $\calc(\Sigma)$ whose image is precisely $\calc_\Sigma(K)$. Also $\calc_\Sigma(K)$ only depends on the isotopy class of $K$: if $K$ and $K'$ are isotopic, then there is a natural identification between $\calc_\Sigma(K)$ and $\calc_\Sigma(K')$. 

\begin{lemma}\label{lemma:isotopy-curve}
Let $\Sigma$ be a surface, and let $K\subseteq\Sigma$ be a subsurface of finite type. Let $c$ and $c'$ be two essential simple closed curves on $\Sigma$ which are homotopic in $\Sigma$ and both contained and essential in $K$. Then $c$ and $c'$ are homotopic within $K$, i.e.\ there exists a homotopy $H:S^1\times [0,1]\to\Sigma$ with $H(S^1\times\{0\})=c$, $H(S^1\times\{1\})=c'$ and $H(S^1\times [0,1])\subseteq K$.
\end{lemma}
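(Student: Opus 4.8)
The plan is to argue entirely with the bigon criterion and innermost‑region arguments, using the two structural facts at hand: each component of $\partial K$ is an essential simple closed curve in $\Sigma$ (by the definition of a subsurface), and $c,c'$ are essential and non‑peripheral in $K$.

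First I would isotope $c'$ within $K$ so that it is transverse to $c$ and realizes the geometric intersection number $i_K(c,c')$, and then show that this number vanishes. Suppose not. Since $c$ and $c'$ are homotopic in $\Sigma$ we have $i_\Sigma(c,c')=0$, so this transverse pair is not in minimal position in $\Sigma$, and the bigon criterion applied in $\Sigma$ yields an innermost bigon $B\subseteq\Sigma$: an embedded disc bounded by an arc of $c$ and an arc of $c'$, with $\mathrm{int}(B)\cap(c\cup c')=\emptyset$. Because $c,c'\subseteq\mathrm{int}(K)$, the boundary $\partial B$ lies in $\mathrm{int}(K)$, hence misses $\partial K$; therefore each component of $\partial K$ is either disjoint from $B$ or entirely contained in the open disc $\mathrm{int}(B)$, and the latter is impossible since components of $\partial K$ are essential in $\Sigma$. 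Thus $B\cap\partial K=\emptyset$, so $B$ lies in the component of $\Sigma\setminus\partial K$ containing $\partial B$, namely $\mathrm{int}(K)$. This is a bigon between $c$ and $c'$ inside $K$, contradicting minimality in $K$. Hence $i_K(c,c')=0$, and we may take $c$ and $c'$ disjoint in $K$.

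Now $c$ and $c'$ are disjoint and freely homotopic in $\Sigma$, so they cobound an embedded annulus $A\subseteq\Sigma$ with $\partial A=c\sqcup c'\subseteq\mathrm{int}(K)$. Since $\partial A\cap\partial K=\emptyset$, each component of $\partial K$ is either disjoint from $A$ or a full circle lying in $\mathrm{int}(A)$, and in the latter case that circle is essential in $A$ (being essential in $\Sigma$), hence parallel to the core. If $A\cap\partial K=\emptyset$, then as above $A\subseteq\mathrm{int}(K)$, and $A$ is exactly a homotopy from $c$ to $c'$ inside $K$, proving the lemma. Otherwise, let $\delta$ be the component of $A\cap\partial K$ closest to $c$, bounding together with $c$ a sub‑annulus $A'\subseteq A$ with $\mathrm{int}(A')\cap\partial K=\emptyset$; then $A'\subseteq K$, and $A'$ is a homotopy in $K$ from $c$ to the boundary component $\delta$ of $K$, contradicting the hypothesis that $c$ is essential (non‑peripheral) in $K$. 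So this case does not occur, and we are done.

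The only delicate ingredients are the classical surface facts invoked — the bigon criterion, the existence of innermost bigons, and the statement that two disjoint freely homotopic essential simple closed curves cobound an embedded annulus — and I expect the verification of this last fact (equivalently, the observation that a curve in $K$ which is boundary‑parallel in $\Sigma$ through an annulus meeting $\partial K$ must already be boundary‑parallel in $K$) to be the only point genuinely requiring care; the rest is bookkeeping about how a circle meets a disc or an annulus rel its boundary. An alternative route, which the authors may prefer, is to first reduce to the finite‑type case by choosing a connected finite‑type subsurface $\Sigma'\supseteq K$ containing the image of a homotopy from $c$ to $c'$, then put a hyperbolic metric on $\Sigma'$ in which $\partial K$ is geodesic (so $K$ is convex), and note that the geodesic representatives of $c$ and $c'$ lie in $\mathrm{int}(K)$ and coincide since $c$ and $c'$ are homotopic in $\Sigma'$; there the only friction is in arranging the metric.
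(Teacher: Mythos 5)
Your argument is correct, but it takes a genuinely different route from the paper's. The paper's proof is a two-line reduction: the image of a homotopy from $c$ to $c'$ is compact, hence contained in a finite-type subsurface of $\Sigma$, and the statement for finite-type ambient surfaces is then quoted from \cite[Lemma~3.16]{FM}. What you do instead is essentially reprove that finite-type statement from first principles: the innermost-bigon step shows $i_K(c,c')=0$ (an innermost bigon has boundary in $\mathrm{int}(K)$, and no essential component of $\partial K$ can be swallowed by an embedded disc, so the bigon lies in $K$ and contradicts minimal position there), and the annulus step shows that the annulus cobounded by the now-disjoint curves either lies entirely in $K$ or exhibits $c$ as parallel to a component of $\partial K$, contradicting the hypothesis that $c$ is essential (non-peripheral) in $K$ --- and indeed that hypothesis is exactly where the lemma would fail without it, so it is good that your proof visibly consumes it. The one point to make explicit is that the classical inputs you invoke (the bigon criterion, the fact that disjoint freely homotopic essential curves cobound an embedded annulus) are standardly proved for finite-type surfaces; for infinite-type $\Sigma$ you should first pass to a finite-type subsurface containing $c\cup c'$ and the image of a homotopy between them --- which is precisely the compactness observation that constitutes the paper's entire proof. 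Your "alternative route" sketched at the end is, modulo replacing the hyperbolic-geometry step by a citation, the paper's argument. In short: your approach buys a self-contained proof at the cost of length, while the paper buys brevity by outsourcing the finite-type case to the literature.
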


\begin{proof}
Let $H:S^1\times [0,1]\to \Sigma$ be a homotopy from $c$ to $c'$. Then $H(S^1\times [0,1])$ is compact, whence contained within a finite-type subsurface $K'\subseteq K$. The conclusion therefore follows from the particular case where the ambient surface is of finite type, established in \cite[Lemma~3.16]{FM}. 
\end{proof}

Similarly, the following lemma can be proved by reducing to the case of finite type surfaces.

\begin{lemma}\label{lemma:disjointness-curves}
Let $\Sigma$ be a surface, and let $K\subseteq\Sigma$ be a subsurface of finite type. Let $c$ and $c'$ be two essential simple closed curves on $\Sigma$ which are both contained in $K$. If the isotopy classes of $c$ and $c'$ have disjoint representatives, then they have disjoint representatives contained in $K$.
\qed
\end{lemma}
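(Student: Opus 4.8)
The plan is to reduce to the case where the ambient surface is of finite type, where the statement becomes a short surgery argument resting only on the fact that the boundary curves of $K$ are essential.

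Two reductions come first. Since $c$ and $c'$ are connected, each lies in a single component of $K$; if these components are distinct we are already done, so we may assume $K$ is connected. Also, $c$ and $c'$ are essential in $K$ (a simple closed curve contained in $K$ that bounded a disk or a once-punctured disk in $K$ would bound one in $\Sigma$, using $\Ends(K)\subseteq E(\Sigma)$), so by Lemma~\ref{lemma:isotopy-curve} their isotopy classes in $K$ and in $\Sigma$ coincide; in particular it makes unambiguous sense to speak of representatives of $[c]$ and $[c']$ contained in $K$. Now fix representatives $a,a'$ of $[c],[c']$ inside $K$, disjoint representatives $d,d'$ of $[c],[c']$ in $\Sigma$, and homotopies $H,H'$ in $\Sigma$ from $a$ to $d$ and from $a'$ to $d'$. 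The set $A$ formed by $a,a',d,d'$ and the images of $H,H'$ is compact, so --- using that $\Sigma$ admits an exhaustion by connected finite-type subsurfaces whose first term may be taken to contain $K$ --- one obtains a connected finite-type subsurface $S\subseteq\Sigma$ with $K\cup A\subseteq S$. Inside $S$: the boundary curves of $K$ remain essential (a curve bounding a disk in $S$ would bound one in $\Sigma$), and $H,H'$ exhibit isotopies in $S$ from $a$ to $d$ and $a'$ to $d'$, so $a$ and $a'$ still admit disjoint representatives in $S$. Replacing $\Sigma$ by $S$ and $c,c'$ by $a,a'$, it remains to prove: \emph{if $\Sigma$ is of finite type, $K\subseteq\Sigma$ is a connected finite-type subsurface with essential boundary, and $c,c'\subseteq K$ admit disjoint representatives in $\Sigma$, then they admit disjoint representatives inside $K$.}

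For this, realize the isotopy classes of $c$ and $c'$ by simple closed curves $\alpha,\alpha'\subseteq\mathrm{int}(K)$ that are transverse and in minimal position within $K$, and set $n=|\alpha\cap\alpha'|$. Suppose towards a contradiction that $n>0$. As $c$ and $c'$ admit disjoint representatives in $\Sigma$, the pair $\alpha,\alpha'$ is not in minimal position in $\Sigma$, so the bigon criterion \cite{FM} yields a bigon $B\subseteq\Sigma$: an embedded closed disk whose boundary is a subarc of $\alpha$ together with a subarc of $\alpha'$. I claim $B\subseteq\mathrm{int}(K)$. Indeed $\partial B\subseteq\alpha\cup\alpha'$ is disjoint from $\partial K$, so $B\cap\partial K=\mathrm{int}(B)\cap\partial K$ is a subset of the compact $1$-manifold $\partial K$ that is both closed (being compact) and open (being the intersection with the open set $\mathrm{int}(B)$), hence either empty or containing a whole component of $\partial K$; the latter is impossible, since such a component, lying in the open disk $\mathrm{int}(B)$, would bound a disk in $\Sigma$, contradicting that $\partial K$ is essential. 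Therefore $B\cap\partial K=\emptyset$, and since $B$ is connected with $\partial B\subseteq\mathrm{int}(K)\neq\emptyset$, we get $B\subseteq\mathrm{int}(K)$. But then $B$ is a bigon between $\alpha$ and $\alpha'$ inside $K$, which by the bigon criterion (applied in $K$) contradicts that $\alpha,\alpha'$ are in minimal position within $K$. Hence $n=0$, so $\alpha$ and $\alpha'$ are disjoint representatives of $[c]$ and $[c']$ contained in $K$.

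The step I expect to require the most care is the reduction to finite type: producing a connected finite-type subsurface $S$ of $\Sigma$ that contains both the finite-type subsurface $K$ and the compact set $A$, and verifying that passage between $\Sigma$ and $S$ preserves both the essentialness of $\partial K$ and the existence of disjoint representatives of $c$ and $c'$ --- the latter being precisely why the homotopies $H,H'$ must be absorbed into $A$. Once one is inside a finite-type surface, the bigon argument is elementary, and the hypothesis that the boundary curves of $K$ are essential in $\Sigma$ is exactly what makes it go through --- which is also why subsurfaces are required to have essential boundary.
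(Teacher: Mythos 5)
Your proof is correct and follows exactly the strategy the paper indicates: the paper offers no details beyond ``reduce to the case of finite-type surfaces,'' and you carry out precisely that reduction (absorbing the curves and homotopies into a finite-type subsurface containing $K$) and then supply the standard bigon-criterion argument for the finite-type case that the paper leaves implicit. No discrepancies to report.
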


\subsubsection{Restriction homomorphisms}

Let $\Sigma$ be a surface, and let $K\subseteq\Sigma$ be an essential subsurface of finite type. Viewing $K$ as a bordered surface, we let $\widehat{K}$ be a surface obtained from $K$ by gluing a once-punctured disk on each boundary component of $K$. By \cite[Proposition~3.9]{FM}, the inclusion $K\hookrightarrow\widehat{K}$ induces a homomorphism $\Map(K)\to\Map(\widehat{K})$, whose kernel is free abelian, generated by twists about the boundary curves of $K$.

Let now $\Stab_{\Map(\Sigma)}(K)$ be the subgroup of $\Map(\Sigma)$ made of all mapping classes that preserve the isotopy class of $K$. The following lemma yields a homomorphism $\Stab_{\Map(\Sigma)}(K)\to\Map(\widehat{K})$.

\begin{lemma}
Let $\Sigma$ be a surface, and let $K\subseteq\Sigma$ be a subsurface of finite type. Then every $\Phi\in\Stab_{\Map(\Sigma)}(K)$ has a representative $\phi\in\Homeo(\Sigma)$ such that $\phi(K)=K$, and in addition any two such representatives induce the same element of $\Map(\widehat{\Sigma})$. 
\end{lemma}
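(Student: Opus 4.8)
The plan is to establish the two assertions in turn: first the existence of a representative $\phi\in\Homeo(\Sigma)$ with $\phi(K)=K$, and then the independence of the induced mapping class of $\widehat K$ from the choice of such a representative (which, combined with the restriction homomorphism $\Map(K)\to\Map(\widehat K)$ recalled just above, yields the well-defined homomorphism $\Stab_{\Map(\Sigma)}(K)\to\Map(\widehat K)$). The existence part is routine: I would start from an arbitrary representative $\psi\in\Homeo(\Sigma)$ of $\Phi$; since $\Phi$ preserves the isotopy class of $K$, the subsurface $\psi(K)$ is isotopic to $K$, so the isotopy extension theorem applied to the multicurve $\partial K$ produces an ambient isotopy $(h_t)_{t\in[0,1]}$ with $h_0=\mathrm{id}$ and $h_1(\psi(K))=K$. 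Then $\phi:=h_1\circ\psi$ is isotopic to $\psi$, hence represents $\Phi$, is orientation-preserving, and satisfies $\phi(K)=K$.

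The substance of the lemma is the independence statement. Given two representatives $\phi,\phi'$ of $\Phi$ with $\phi(K)=\phi'(K)=K$, I set $f:=\phi'\circ\phi^{-1}$. Then $f$ is isotopic to $\mathrm{id}_\Sigma$, it satisfies $f(K)=K$ (as $\phi^{-1}(K)=K$), and $f|_K=\phi'|_K\circ(\phi|_K)^{-1}$; so it suffices to show that $f|_K$ induces the \emph{trivial} element of $\Map(\widehat K)$. Fixing a basepoint $x_0\in K$ and letting $\gamma$ be the trace $t\mapsto H_t(x_0)$ of an isotopy $H$ from $\mathrm{id}_\Sigma$ to $f$, the map $f_*$ on $\pi_1(\Sigma,x_0)$ is ``conjugation by $\gamma$'', i.e.\ $f_*(\alpha)=\overline\gamma\,\alpha\,\gamma$. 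Since $K$ is connected and $f(x_0)\in K$, I choose a path $\delta\subseteq K$ from $f(x_0)$ to $x_0$ and form the automorphism $\theta$ of $\pi_1(K,x_0)$ obtained from $f|_K$ by correcting the basepoint along $\delta$; concatenating $\gamma$ and $\delta$ gives an element $c\in\pi_1(\Sigma,x_0)$ with $\theta(\alpha)=c\,\alpha\,c^{-1}$ for all $\alpha\in\pi_1(K,x_0)$.

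The key step is to upgrade this to the statement that $\theta$ is an \emph{inner} automorphism of $\pi_1(K,x_0)$. Because $\theta$ carries the subgroup $\pi_1(K,x_0)$ onto itself, the element $c$ normalizes $\pi_1(K,x_0)$ inside $\pi_1(\Sigma,x_0)$; and for an essential subsurface $K$ with nonabelian fundamental group, $\pi_1(K)$ is self-normalizing in $\pi_1(\Sigma)$, so $c\in\pi_1(K,x_0)$ and $\theta$ is inner. Since the inclusion $K\hookrightarrow\widehat K$ is a homotopy equivalence (each capping once-punctured disk deformation retracts onto its boundary circle), $\theta$ corresponds to an inner automorphism of $\pi_1(\widehat K)$; as $\widehat K$ has finite type, the Dehn--Nielsen--Baer theorem ($\Map(\widehat K)\hookrightarrow\Out(\pi_1\widehat K)$) then forces the class of $f|_K$ in $\Map(\widehat K)$ to be trivial. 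The remaining cases, where $\pi_1(\widehat K)$ is abelian or $\Map(\widehat K)$ is finite (disk, annulus, pair of pants, torus, low-complexity punctured spheres), fall outside this input and must be dispatched directly, but there $\Map(\widehat K)$ is trivial or the conclusion is an immediate check.

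The hard part, as this makes clear, is transferring the hypothesis ``$f$ is isotopic to the identity of the \emph{big} surface $\Sigma$'' down to the finite-type surface $K$. The naive hope that the $\Sigma$-isotopy can be arranged to preserve $K$ setwise throughout is false, and a boundary Dehn twist of $K$ can be isotopic to $\mathrm{id}_\Sigma$ yet nontrivial in $\Map(K)$, which is exactly why the target must be $\Map(\widehat K)$ and not $\Map(K)$; the self-normalizing property of $\pi_1(K)$ is what cleanly encodes that the conjugating path $\gamma$ may be homotoped into $K$. I note that everything relevant here---the path $\gamma$, finitely many loops generating $\pi_1(K)$, and their images under $H$---is compact, so one could equivalently run the argument inside a single finite-type subsurface of $\Sigma$ containing $K\cup\gamma$ and quote the finite-type statements directly, in the spirit of the earlier reductions in this section.
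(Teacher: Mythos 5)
Your proof is correct in substance, but for the part that carries all the content -- the independence of the induced mapping class from the choice of representative -- you take a genuinely different route from the paper. The existence step is the same in both (the paper extends an isotopy between $K$ and $\phi_0(K)$ to an ambient isotopy after observing its image lies in a finite-type subsurface; you invoke isotopy extension for the multicurve $\partial K$ directly). For uniqueness, the paper argues with curves: two representatives preserving $K$ act identically on isotopy classes of essential simple closed curves of $K$, since their images are isotopic in $\Sigma$ and hence in $K$ by Lemma~\ref{lemma:isotopy-curve}, and one concludes by faithfulness of the $\Map(\widehat{K})$-action on curves (the Alexander method). You instead argue with $\pi_1$: the difference $f=\phi'\circ\phi^{-1}$ is isotopic to $\mathrm{id}_\Sigma$, hence acts on $\pi_1(\Sigma)$ by a conjugation; self-normalization of $\pi_1(K)$ in $\pi_1(\Sigma)$ forces the conjugator into $\pi_1(K)$, so the induced outer automorphism of $\pi_1(K)\cong\pi_1(\widehat{K})$ is trivial, and injectivity of $\Map(\widehat{K})\to\Out(\pi_1(\widehat{K}))$ (Dehn--Nielsen--Baer) finishes. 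Both routes work, and the comparison is instructive. Your approach needs two inputs you assert rather than prove: self-normalization (standard, e.g.\ via Bass--Serre theory applied to the splitting of $\pi_1(\Sigma)$ along $\partial K$, using that a noncyclic vertex group cannot fix an edge with cyclic edge group) and DNB injectivity; in exchange it is more robust in low complexity, where the paper's one-line argument is actually vacuous ($K$ an annulus or a pair of pants contains no essential curves, and for a one-holed torus the action on unoriented curves has kernel) and would need exactly the kind of orientation argument -- no essential curve on an orientable surface is freely homotopic to its inverse -- that also dispatches your deferred cases. In fact your list of exceptions is overly cautious: $\Map(\widehat{K})\to\Out(\pi_1(\widehat{K}))$ is injective for every finite-type orientable surface (including the twice- and thrice-punctured spheres), and self-normalization holds for the annulus as well since a simple closed curve is primitive and never conjugate to its inverse, so essentially the whole argument runs uniformly. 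Your closing observation -- that $f$ isotopic to $\mathrm{id}_\Sigma$ can restrict to a boundary twist of $K$, which is precisely why the target must be $\Map(\widehat{K})$ rather than $\Map(K)$ -- is correct and is left implicit in the paper.
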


\begin{proof}
Let $\phi_0$ be a representative of $\Phi$ in $\Homeo(\Sigma)$. Every isotopy $K\times [0,1]\to\Sigma$ between $K$ and $\phi_0(K)$ has image contained in a subsurface of $\Sigma$ of finite type, and therefore can be extended to an isotopy of $\Sigma$. The first part of the lemma follows. The additional part follows from the observation that any two such representatives have the same action on homotopy classes of essential simple closed curves of $K$.
\end{proof}

We also let $\Fix_{\Map(\Sigma)}(K)$ be the subgroup of $\Map(\Sigma)$ made of all elements that have a representative $\phi\in\Homeo(\Sigma)$ such that $\phi(K)=K$ and $\phi_{|K}=\mathrm{id}_K$.  We note that Dehn twists about boundary curves of $K$ belong to $\Fix_{\Map(\Sigma)}(K)$.

\begin{lemma}\label{lemma:ses-mcg}
Let $\Sigma$ be a surface, let $K\subseteq\Sigma$ be a subsurface of finite type, and let $\widehat{K}$ be a surface obtained from $K$ by gluing a once-punctured disk on each boundary component of $K$. 

Then there exists a homomorphism $\Stab_{\Map(\Sigma)}(K)\to\Map(\widehat{K})$ whose kernel is equal to $\Fix_{\Map(\Sigma)}(K)$.
\end{lemma}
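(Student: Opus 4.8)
The plan is to realize the homomorphism concretely by a ``coning-off'' construction and then compute its kernel by hand. First I would \textbf{construct the map} $\rho\from\Stab_{\Map(\Sigma)}(K)\to\Map(\widehat K)$: given $\Phi$, choose by the preceding lemma a representative $\phi\in\Homeo(\Sigma)$ with $\phi(K)=K$; as $\phi$ then permutes the boundary curves of $K$, I extend $\phi|_K$ over the once-punctured disks of $\widehat K$, sending the disk attached to a boundary curve $\partial_i K$ homeomorphically to the disk attached to $\phi(\partial_i K)$ and puncture to puncture, to obtain $\widehat\phi\in\Homeo(\widehat K)$, and set $\rho(\Phi):=[\widehat\phi]$. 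The preceding lemma is precisely the assertion that $[\widehat\phi]$ does not depend on the chosen representative $\phi$ (Alexander's lemma absorbs the choices made inside the disks), so $\rho$ is well defined; it is a homomorphism because the extension of a composite $\phi\circ\psi$ can be taken to be the composite of the extensions. The inclusion $\Fix_{\Map(\Sigma)}(K)\subseteq\ker\rho$ is then immediate: a representative $\phi$ with $\phi|_K=\mathrm{id}_K$ extends across the punctured disks by the identity, so $\widehat\phi=\mathrm{id}_{\widehat K}$ and $\rho(\Phi)=1$.

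The reverse inclusion $\ker\rho\subseteq\Fix_{\Map(\Sigma)}(K)$ is the heart of the matter. Let $\Phi\in\ker\rho$, with a representative $\phi$ satisfying $\phi(K)=K$. Since $\widehat\phi$ is isotopic to $\mathrm{id}_{\widehat K}$ it fixes each puncture of $\widehat K$ individually, hence $\phi$ preserves each boundary curve of $K$; extending an isotopy of each boundary circle to the identity to a compactly supported ambient isotopy of $\Sigma$, I may assume $\phi$ fixes $\partial K$ pointwise. Then $\phi|_K$ represents an element of $\Map(K)$ whose image under the capping homomorphism $\Map(K)\to\Map(\widehat K)$ of \cite[Proposition~3.9]{FM} is $[\widehat\phi]=1$. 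Since the kernel of that homomorphism is free abelian on the Dehn twists about the boundary curves of $K$, the class $\phi|_K$ is isotopic rel $\partial K$ to a product $\prod_i T_i^{n_i}$ of twists about curves parallel to the boundary components of $K$, which I take supported in pairwise disjoint collar annuli contained in the interior of $K$; after one more compactly supported ambient isotopy I may assume $\phi|_K=\prod_i T_i^{n_i}$ exactly. Let $h\in\Homeo(\Sigma)$ be $\prod_i T_i^{n_i}$ on $K$ and the identity off $K$. Then $\phi\circ h^{-1}$ restricts to $\mathrm{id}_K$ on $K$ and preserves $K$, so $[\phi h^{-1}]\in\Fix_{\Map(\Sigma)}(K)$; and $[h]=\prod_i[T_{\partial_i K}]^{n_i}$ in $\Map(\Sigma)$, each twist $[T_{\partial_i K}]$ lying in $\Fix_{\Map(\Sigma)}(K)$ since it can be realized in a collar annulus on the side of $\partial_i K$ \emph{outside} $K$. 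As $\Fix_{\Map(\Sigma)}(K)$ is a subgroup, $[h]\in\Fix_{\Map(\Sigma)}(K)$, whence $\Phi=[\phi h^{-1}]\,[h]\in\Fix_{\Map(\Sigma)}(K)$.

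I expect the main obstacle to be exactly this last step, specifically the temptation to isotope $\phi$ directly to the identity on $K$: this fails, because $\phi$ is completely unconstrained on $\Sigma\setminus K$ and the residual boundary twists $\prod_i T_i^{n_i}$ are genuinely nontrivial in $\Map(\Sigma)$ (the curves $\partial_i K$ are essential). The correct observation is that $\Fix_{\Map(\Sigma)}(K)$ already contains every boundary Dehn twist, so these leftover twists can be absorbed. Everything else is bookkeeping: I must check that each reduction — from $\phi(K)=K$ to $\phi|_{\partial K}=\mathrm{id}_{\partial K}$, and from ``isotopic rel $\partial K$ to $\prod_i T_i^{n_i}$'' to ``equal to $\prod_i T_i^{n_i}$'' — is obtained by extending an isotopy with compact support in $K$ to a compactly supported ambient isotopy of $\Sigma$, which is routine; finiteness of type of $K$ enters only in guaranteeing that $\widehat K$ is of finite type, so that \cite[Proposition~3.9]{FM} applies.
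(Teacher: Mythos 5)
Your proof is correct and follows essentially the same route as the paper's (much terser) argument: elements of the kernel restrict on $K$ to products of peripheral Dehn twists by \cite[Proposition~3.9]{FM}, and these can be absorbed into $\Fix_{\Map(\Sigma)}(K)$ by realizing the twists in collar annuli outside $K$ — the paper records exactly this observation just before the lemma. Your write-up merely makes explicit the bookkeeping (well-definedness of the capping map, reduction to $\phi|_{\partial K}=\mathrm{id}$) that the paper leaves implicit.
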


\begin{proof}
Every element in the kernel of this homomorphism has a representative $\phi\in\Homeo(\Sigma)$ such that $\phi(K)=K$, and being in the kernel implies that this representative $\phi$ is a product of peripheral Dehn twists. One can thus isotope $\phi$ to get a representative $\phi'$ such that $\phi'_{|K}=\mathrm{id}$.
\end{proof}

\begin{lemma}\label{lemma:fix-stab-curves}
Let $\Sigma$ be a surface, and let $K\subseteq\Sigma$ be a subsurface of finite type. 
\begin{enumerate}
\item The setwise stabilizer of $\calc_\Sigma(K)$ in the $\Map(\Sigma)$-action on $\calc(\Sigma)$ is equal to $\Stab_{\Map(\Sigma)}(K)$.
\item The pointwise stabilizer of $\calc_\Sigma(K)$ in the $\Map(\Sigma)$-action on $\calc(\Sigma)$ is equal to $\Fix_{\Map(\Sigma)}(K)$.
\end{enumerate}
\end{lemma}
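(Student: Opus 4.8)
The plan is to prove each equality by a double inclusion, the inclusion $\supseteq$ being immediate in both cases and the reverse inclusion carrying the content. For the first item, if $\Phi\in\Stab_{\Map(\Sigma)}(K)$ has a representative $\phi$ with $\phi(K)=K$, then $\phi$ carries any curve with an essential representative in $K$ to another such curve and preserves disjointness, so it stabilizes $\calc_\Sigma(K)$; this gives one inclusion. Conversely, given $\Phi$ stabilizing $\calc_\Sigma(K)$ setwise, I would fix a representative $\phi$ and note that $\phi(\calc_\Sigma(K))=\calc_\Sigma(\phi(K))$, so that the hypothesis becomes $\calc_\Sigma(\phi(K))=\calc_\Sigma(K)$. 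The heart of the matter is then a reconstruction statement: a finite-type subsurface with nonempty curve graph is determined up to isotopy by the set of isotopy classes of curves it contains. To see this I would pick a finite collection $\mathcal{F}$ of curves filling $K$; each lies in $\calc_\Sigma(K)=\calc_\Sigma(\phi(K))$ and hence is essential in $\phi(K)$, and $\mathcal{F}$ must also fill $\phi(K)$ — otherwise an essential curve of $\phi(K)$ disjoint from all of $\mathcal{F}$ would, via Lemma~\ref{lemma:disjointness-curves}, be an essential curve of $K$ disjoint from $\mathcal{F}$, contradicting filling. Realizing $\mathcal{F}$ in minimal position, both $K$ and $\phi(K)$ are recovered up to isotopy as the subsurface filled by $\mathcal{F}$; here the requirement that boundary curves of a subsurface be essential in $\Sigma$ is exactly what forces the disk and once-punctured-disk complementary regions to be capped off consistently, removing any ambiguity about the enclosed ends. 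Thus $\phi(K)$ is isotopic to $K$ and $\Phi\in\Stab_{\Map(\Sigma)}(K)$.

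For the second item, any representative restricting to the identity on $K$ fixes every curve contained in $K$, which gives the inclusion of $\Fix_{\Map(\Sigma)}(K)$ into the pointwise stabilizer. For the converse, I would suppose $\Phi$ fixes $\calc_\Sigma(K)$ pointwise; in particular it stabilizes $\calc_\Sigma(K)$ setwise, so by the first item $\Phi\in\Stab_{\Map(\Sigma)}(K)$, and by Lemma~\ref{lemma:ses-mcg} it induces a class $\widehat\Phi\in\Map(\widehat K)$, with $\Fix_{\Map(\Sigma)}(K)$ the kernel of $\Stab_{\Map(\Sigma)}(K)\to\Map(\widehat K)$. Under the identification of $\calc_\Sigma(K)$ with $\calc(\widehat K)$ (furnished by Lemmas~\ref{lemma:isotopy-curve} and~\ref{lemma:disjointness-curves} after capping the boundary components), the pointwise-fixing hypothesis says that $\widehat\Phi$ fixes every vertex of $\calc(\widehat K)$. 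I would then invoke the standard fact, provable by the Alexander method applied to a filling system of curves, that a mapping class of a finite-type surface acting trivially on the isotopy classes of its essential simple closed curves is the identity; this yields $\widehat\Phi=\mathrm{id}$ and hence $\Phi\in\Fix_{\Map(\Sigma)}(K)$.

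The main obstacle in both parts is the reverse inclusion: for the first item it is the reconstruction of $K$ from its curve set, where the delicate point is the bookkeeping of ends enclosed by complementary once-punctured disks — handled, as above, by the essentiality of boundary curves; for the second item it is the curve-complex rigidity input. I would also flag two nondegeneracy hypotheses that are implicit in the statement: one needs $\calc_\Sigma(K)$ nonempty (equivalently $K$ of positive complexity) for a filling system to exist, and the rigidity statement must steer clear of the low-complexity surfaces — such as $S_{1,1}$, $S_{0,4}$ and the closed genus-two surface — on which a hyperelliptic involution acts trivially on the curve complex. In the setting of this paper $K$ supports pseudo-Anosov mapping classes, so one works in the range of complexity where these exceptions do not interfere.
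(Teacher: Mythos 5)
Your proposal is correct and follows essentially the same route as the paper's (very terse, two-line) proof: part 1 comes down to the fact that the vertex set of $\calc_\Sigma(K)$ determines $K$ up to isotopy — which you establish carefully via a filling system, where the paper merely asserts that a setwise stabilizer has a representative fixing the boundary curves of $K$ — and part 2 reduces, via the restriction homomorphism of Lemma~\ref{lemma:ses-mcg} and the identification $\calc_\Sigma(K)\cong\calc(\widehat{K})$, to curve-graph rigidity (the Alexander method) for finite-type surfaces, exactly as the paper indicates. The caveats you flag are genuine gaps in the statement as written — one does need $\calc_\Sigma(K)\neq\emptyset$, and part 2 fails when $\widehat{K}$ is one of the exceptional surfaces whose hyperelliptic involution acts trivially on the curve graph — but these are harmless in the paper's applications, where $K$ may be enlarged to avoid them.
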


\begin{proof}
An element of $\Map(\Sigma)$ is in the stabilizer of $\calc(\Sigma)$ if and only if it has a representative $\phi$ that fixes the boundary curves of $K$ (and therefore such that $\phi(K)=K$). The second statement then follows from the case of finite-type surfaces.
\end{proof}

\subsection{Review on quasi-trees of metric spaces}\label{sec:bbf}

We now review the celebrated construction of Bestvina, Bromberg and Fujiwara from \cite{BBF} which will be used in the next section. 

An action of a group $G$ on a collection $\mathbb{Y}$ of metric spaces is \emph{metric-preserving} if for every $g\in G$ and every $Y\in\mathbb{Y}$, there exists an isometry $\iota_g^Y:Y\to gY$, so that for all $g,h\in G$ and every $Y\in\mathbb{Y}$, one has $\iota_{gh}^Y=\iota_g^{hY}\circ\iota_h^Y$. A \emph{$G$-equivariant projection family} is a pair $(\mathbb{Y},(\pi_Y(Z))_{Y\neq Z\in\mathbb{Y}})$ where 
\begin{itemize}
\item $\mathbb{Y}$ is a collection of metric spaces equipped with a metric-preserving $G$-action, 
\item  $\pi_Y(Z)$ is a nonempty subset of $Y$ for any two distinct $Y,Z\in\mathbb{Y}$,
\item for every $g\in G$ and any two distinct $Y,Z\in\mathbb{Y}$, one has $\pi_{gY}(gZ)=\iota_g^Y(\pi_Y(Z))$.
\end{itemize}  

\begin{de}\label{de:bbf}
Let $G$ be a group. A $G$-equivariant projection family $(\mathbb{Y},(\pi_Y(Z))_{Y\neq Z\in\mathbb{Y}})$ is a \emph{BBF family} for $G$ if, letting $d_Y(X,Z):=\diam(\pi_Y(X)\cup\pi_Y(Z))$ for every $X,Y,Z\in\mathbb{Y}$ with $Y\neq X,Z$, there exists $\theta>0$ such that the following conditions hold:
\begin{itemize}
	\item[(P0)] For all distinct $X,Y\in\mathbb{Y}$, one has $d_Y(X,X) \leq \theta$;
	\item[(P1)] For all pairwise distinct $X,Y,Z\in\mathbb{Y}$, if $d_Y(X,Z)> \theta$ then $d_X(Y,Z) \leq \theta$;
	\item[(P2)] For all $X,Z\in\mathbb{Y}$, the set $\{Y \neq X,Z \mid d_Y(X,Z) > \theta\}$ is finite.
\end{itemize}
\end{de}

The following statement records the output of the Bestvina--Bromberg--Fujiwara construction.

\begin{theo}[{Bestvina--Bromberg--Fujiwara \cite{BBF}}]\label{theo:bbf}
Let $G$ be a group. Assume that there exists a BBF family $(\mathbb{Y},(\pi_Y(Z))_{Y\neq Z\in\mathbb{Y}})$ for $G$, with all spaces in $\mathbb{Y}$ uniformly hyperbolic. Then $G$ acts by isometries on a hyperbolic metric space $\calc(\mathbb{Y})$ with the following properties:
\begin{enumerate}
\item every $Y\in\mathbb{Y}$ embeds as a geodesically convex subspace of $\calc(\mathbb{Y})$,
\item there exists $D>0$ such that for all $Y,Z\in\mathbb{Y}$ with $Y\neq Z$, one has $\Delta_{\calc(\mathbb{Y})}(Y,Z)<D$, 
\item for every $Y\in\mathbb{Y}$ and every $g\in\Stab_G(Y)$, if $g$ is loxodromic WPD for the action of $\Stab_G(Y)/\Fix_G(Y)$ on $Y$, then $g$ is loxodromic WWPD for the $G$-action on $\calc(\mathbb{Y})$.
\end{enumerate} 
\end{theo}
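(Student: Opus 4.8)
The plan is to derive this statement from the construction of \cite{BBF}, supplemented by the discreteness criterion for loxodromic axes in a quasi-tree of metric spaces; most of the ``proof'' is really a recollection of what the space $\calc(\mathbb{Y})$ is, after which the three properties can be read off. So I would first recall the construction. Fix the constant $\theta$ from Definition~\ref{de:bbf}. As in \cite{BBF}, one replaces the functions $d_Y(\cdot,\cdot)$ by coarsely equivalent functions $d^\pi_Y(\cdot,\cdot)$ obtained through a median-type correction, which satisfy a strengthened version of (P0)--(P2); in particular, for $L$ large (depending only on $\theta$), the sets $\{W\neq X,Z\mid d^\pi_W(X,Z)>L\}$ are totally ordered. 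The projection complex $\calp_L(\mathbb{Y})$ is then the graph with vertex set $\mathbb{Y}$ in which $Y$ and $Z$ are adjacent whenever $d^\pi_W(Y,Z)\leq L$ for every $W\neq Y,Z$; by \cite{BBF} it is connected, quasi-isometric to a tree, and carries a simplicial $G$-action induced from the metric-preserving $G$-action on $\mathbb{Y}$. The quasi-tree of metric spaces $\calc(\mathbb{Y})$ is built from $\bigsqcup_{Y\in\mathbb{Y}}Y$ by gluing, for every edge $\{Y,Z\}$ of $\calp_L(\mathbb{Y})$, an edge joining a point of $\pi_Y(Z)$ to a point of $\pi_Z(Y)$, and taking the induced path metric; equivariance of the projection family makes this into a $G$-space containing each $Y$ as a subspace.

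Next I would quote the geometric output of \cite{BBF}. Since the spaces $Y\in\mathbb{Y}$ are uniformly hyperbolic, their combination theorem gives that $\calc(\mathbb{Y})$ is hyperbolic and, after normalising the gluing edges, that each $Y$ sits in it as an isometrically embedded, geodesically convex subspace, which is item~(1). For item~(2) one checks that the nearest-point projection in $\calc(\mathbb{Y})$ of $Z$ onto $Y\neq Z$ lies within bounded distance of $\pi_Y(Z)$; since $\diam\pi_Y(Z)\leq\theta$ by~(P0), the diameter $\Delta_{\calc(\mathbb{Y})}(Y,Z)$ is bounded by a constant $D$ depending only on $\theta$ and the hyperbolicity constant. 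Both are essentially quotations from \cite{BBF}; the only care needed is to match conventions and, in the applications, to note that the required uniform hyperbolicity holds because the curve-graph hyperbolicity constant is uniform over finite-type surfaces.

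The point that takes real work is item~(3). Fix $Y$ and $g\in\Stab_G(Y)$ loxodromic and WPD for the $\Stab_G(Y)/\Fix_G(Y)$-action on $Y$. As $Y$ is quasi-isometrically embedded and convex in $\calc(\mathbb{Y})$, the element $g$ is loxodromic on $\calc(\mathbb{Y})$ with an axis staying, up to bounded error, inside $Y$, and $\{g^{-\infty},g^{+\infty}\}\subseteq\partial_\infty Y\subseteq\partial_\infty\calc(\mathbb{Y})$. To prove that the $G$-orbit of $(g^{-\infty},g^{+\infty})$ is discrete in $(\partial_\infty\calc(\mathbb{Y})\times\partial_\infty\calc(\mathbb{Y}))\setminus\Delta$, I would take $h_n\in G$ with $h_n\cdot g^{\pm\infty}\to g^{\pm\infty}$ and argue that for large $n$ the axis of $h_ngh_n^{-1}$ fellow-travels that of $g$ along an arbitrarily long segment inside $Y$. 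A geodesic of $\calc(\mathbb{Y})$ penetrating deep into a piece $Y$ cannot leave it without a definite detour through some $\pi_{Y'}(Y)$, which is $\theta$-small; hence such fellow-travelling forces $h_nY=Y$ and forces $h_n$ to translate along the axis inside $Y$ coarsely as $g$ does. The WPD hypothesis inside $Y$ then leaves only finitely many possibilities for $h_n$ modulo $\Fix_G(Y)$, while $\Fix_G(Y)$ fixes $\partial_\infty Y$ pointwise; so $h_n\cdot g^{\pm\infty}=g^{\pm\infty}$ for $n$ large, which is the desired discreteness. This implication follows from the analysis of geodesics in $\calc(\mathbb{Y})$ in \cite{BBF} together with the characterisation of WWPD in \cite{BBF0} (see also \cite{HM}), so I would cite it rather than reprove it.

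The main obstacle is keeping track of the error constants in the ``no back-tracking through small projections'' estimate underlying item~(3): the whole argument there rests on geodesics that penetrate far into a piece being trapped inside it. This estimate is already packaged in \cite{BBF}'s description of geodesics in $\calc(\mathbb{Y})$, so in practice the difficulty is bibliographic --- pinning down the precise reference for the WPD $\Rightarrow$ WWPD implication and checking that its hypotheses match ours --- rather than a genuinely new argument.
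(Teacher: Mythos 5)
Your proposal is correct and follows essentially the same route as the paper: the paper's proof is likewise a sequence of citations to \cite{BBF} (Definition~4.1 for the construction, Theorem~4.17 for hyperbolicity, Lemma~4.2 for geodesic convexity, Corollary~4.10 together with axiom (P0) for the bound $D$, and Proposition~4.20 for the WPD $\Rightarrow$ WWPD implication). Your additional sketches of the construction and of the discreteness argument for item~(3) are consistent with what is being cited and do not change the approach.
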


\begin{proof}
Let $\calc(\mathbb{Y})$ be the quasi-tree of metric spaces defined in \cite[Definition~4.1]{BBF}. As all spaces in $\mathbb{Y}$ are uniformly hyperbolic, the space $\calc(\mathbb{Y})$ is hyperbolic \cite[Theorem~4.17]{BBF}. Every space in $\mathbb{Y}$ is geodesically convex in $\calc(\mathbb{Y})$ by \cite[Lemma~4.2]{BBF}. The existence of the constant $D$ follows from the fact that projections have uniformy bounded diameter in view of Axiom~$(P0)$ together with \cite[Corollary~4.10]{BBF}. The conclusion about WWPD isometries is given in \cite[Proposition~4.20]{BBF}. 
\end{proof}

\subsection{Quasi-trees of curve graphs for big mapping class groups}

Let $\Sigma$ be a connected orientable surface. Assume that $\Sigma$ contains a connected nondisplaceable subsurface $K$ of finite type, and denote by $[K]$ the isotopy class of $K$. Then $\Map(\Sigma)$ acts in a metric-preserving way on $$\mathbb{Y}_K:=\{\calc_\Sigma([K'])|[K']\in\Map(\Sigma)\cdot [K]\}.$$ As $K$ is nondisplaceable and of finite type, given any two distinct subsurfaces $K_1,K_2\in\Homeo(\Sigma)\cdot K$, at least one of the boundary components of $K_2$ intersects the subsurface $K_1$ in an essential curve or arc. This observation yields, for every $K_1\in\Map(\Sigma)\cdot K$, a projection $$\pi_{\calc_\Sigma(K_1)}(\calc_{\Sigma}(K_2))\subseteq\calc_\Sigma(K_1),$$ equal to the set of all isotopy classes of essential simple closed curves on $K_1$ that are disjoint from some boundary curve of $K_2$ that intersects $K_1$, see \cite{MM2}. The family $\mathbb{Y}_K$ together with these projections form a $\Map(\Sigma)$-equivariant projection family.

\begin{prop}\label{prop:bbf-family}
Let $\Sigma$ be a connected orientable surface which contains a connected nondisplaceable subsurface $K$ of finite type.

Then the projection family $(\mathbb{Y}_K,(\pi_Y(Z))_{Y\neq Z\in\mathbb{Y}_K})$ is a BBF family for $\Map(\Sigma)$.
\end{prop}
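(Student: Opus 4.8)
The plan is to verify directly the three axioms (P0), (P1), (P2) of Definition~\ref{de:bbf} for a single constant $\theta$, reducing each verification to a statement about subsurface projections in a \emph{finite-type} subsurface of $\Sigma$ and then invoking the fact that the relevant Masur--Minsky constants are uniform over all finite-type surfaces. Throughout I would use that every $Y=\calc_\Sigma(K_Y)$ with $K_Y\in\Map(\Sigma)\cdot K$ is isometric to the curve graph of the fixed finite-type surface $\widehat{K}$, and that $\pi_{\calc_\Sigma(K_1)}(\calc_\Sigma(K_2))$ is exactly the Masur--Minsky subsurface projection of the boundary multicurve $\partial K_2$ into $K_1$ \cite{MM2}. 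The crucial point, already emphasised after Theorem~\ref{theointro:hyperbolic-actions}, is that although the ambient surface $\Sigma$ is of infinite type, each axiom involves only finitely many of the subsurfaces $K_Y$, whose union is contained in some connected finite-type subsurface $S\subseteq\Sigma$; inside $S$ the projections computed in $\Sigma$ and in $S$ agree, because subsurface projection is a local operation.

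For (P0), I would bound $d_Y(X,X)=\diam(\pi_Y(X))$ by noting that $\pi_{K_Y}(K_X)$ is the union of the projections into $K_Y$ of those boundary curves of $K_X$ that meet $K_Y$. These boundary curves are pairwise disjoint and there are at most $|\partial K|$ of them, so their projections lie within uniformly bounded distance of one another; this yields a bound on $\diam(\pi_Y(X))$ depending only on the topological type of $K$. For (P1), the inequality to establish is precisely the Behrstock inequality for the three finite-type subsurfaces $K_X,K_Y,K_Z$: placing them inside a common finite-type subsurface $S$ and applying the finite-type Behrstock inequality there, I would obtain $\min\bigl(d_{K_Y}(K_X,K_Z),\,d_{K_X}(K_Y,K_Z)\bigr)\le B$ for a constant $B$ that, by the standard uniform versions of the inequality, does not depend on $S$. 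Taking $\theta\ge\max\bigl(B,\,\text{the (P0) bound}\bigr)$ then settles both (P0) and (P1).

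The main obstacle is (P2): showing that for fixed $X=\calc_\Sigma(K_X)$ and $Z=\calc_\Sigma(K_Z)$ only finitely many $Y\in\mathbb{Y}_K$ satisfy $d_Y(X,Z)>\theta$. In the finite-type world this is the classical finiteness-of-large-projections statement, proved from the Bounded Geodesic Image theorem \cite{MM2}: a geodesic in the curve graph joining a curve of $K_X$ to a curve of $K_Z$ is finite, and every subsurface with large projection must be disjoint from one of its vertices. The difficulty in our setting is that the complement of a single curve in the infinite-type surface $\Sigma$ contains infinitely many essential subsurfaces, so this count does not terminate directly. To overcome this I would first use nondisplaceability -- which forces every $K_Y$ to overlap both $K_X$ and $K_Z$ -- together with the Behrstock inequality (P1): when $d_{K_Y}(K_X,K_Z)$ is large, $d_{K_X}(K_Y,K_Z)$ and $d_{K_Z}(K_X,K_Y)$ are small, so $\pi_{K_X}(K_Y)$ is confined to a bounded, hence finite, set of vertices of the finite-type curve graph $\calc(\widehat{K_X})$, and symmetrically in $K_Z$.

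I would then combine this confinement with the Bounded Geodesic Image theorem applied inside $K_Y$ itself, using the separating curves $\partial S$ of a fixed finite-type subsurface $S\supseteq K_X\cup K_Z$ as a barrier disjoint from both projections, to conclude that any $K_Y$ with large projection is in fact isotopic into $S$; finiteness then follows from the finite-type statement applied within $S$. Establishing this localisation -- that large projection confines $K_Y$ to a fixed finite-type subsurface -- is the heart of the proof, and the place where the interplay between nondisplaceability and the uniformity of the Masur--Minsky constants is essential.
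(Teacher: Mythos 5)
Your overall architecture coincides with the paper's: (P0) from the uniform bound on the diameter of projections, (P1) by placing $K_X\cup K_Y\cup K_Z$ inside a common finite-type subsurface and invoking the finite-type Behrstock inequality with a threshold independent of the topology of that subsurface, and (P2) by a localisation step confining every subsurface with large projection to a fixed finite-type subsurface, followed by the finite-type finiteness statement (again with uniform constants) applied there. This is exactly the paper's proof, which runs the localisation with $K_{xy}$, the smallest finite-type subsurface containing the two relevant curves, where you use $S\supseteq K_X\cup K_Z$.

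However, your execution of (P2) contains one genuinely false step and one misattributed tool. The false step: from the Behrstock inequality you conclude that $\pi_{K_X}(K_Y)$ is confined to a ``bounded, hence finite, set of vertices'' of the curve graph of $K_X$. Curve graphs are locally infinite, so a subset of bounded diameter is typically infinite; moreover, even if the projections took only finitely many values, infinitely many distinct subsurfaces $K_Y$ could share the same projection, so this confinement cannot by itself bound the number of $Y$'s. Fortunately this whole Behrstock detour is dispensable. The misattributed tool: the localisation does not need the Bounded Geodesic Image theorem. The correct (and the paper's) argument is elementary: if $K_Y$ cannot be isotoped into $S$, then -- being connected, and forced by nondisplaceability to meet $K_X\subseteq S$ -- it must essentially cross $\partial S$, and an essential arc or curve of $\partial S\cap K_Y$ is disjoint from $\partial K_X\cup\partial K_Z$; the projections to $K_Y$ of $\partial K_X$ and of $\partial K_Z$ are then both uniformly close to the surgeries of that arc, so $d_{K_Y}(K_X,K_Z)$ is bounded by a universal constant. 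It is this disjointness (Lipschitz) property of subsurface projection, not BGI, that makes the barrier work; BGI enters only inside the finite-type finiteness statement that you quote at the end. With these two repairs your proof becomes the paper's.
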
  

\begin{proof}
The argument is the same as in the proof of \cite[Proposition~3.2]{CMM}. Given $Y\in\mathbb{Y}$ and $X,Z\in\mathbb{Y}\setminus\{Y\}$, we let $d_Y(X,Z):=\diam_Y(\pi_Y(X)\cup\pi_Y(Z))$. We will now check the various projection axioms from Definition~\ref{de:bbf}.

Condition (P0) is satisfied because the projections have uniformly bounded diameter.

We now check Condition (P1). Let $X,Y,Z\in\mathbb{Y}$ be pairwise distinct -- these are curve complexes associated to finite-type subsurfaces $K_X,K_Y,K_Z$ of $\Sigma$. As $K_X\cup K_Y\cup K_Z$ is compact, it is contained in some subsurface $\widetilde{K}\subseteq \Sigma$ of finite type. By working within the surface $\widetilde{K}$, Condition~(P1) follows from \cite[Lemma~5.2]{BBF} -- notice indeed that the threshold constant $\theta$ given that lemma is independent from the topology of $\widetilde{K}$.

We finally check Condition (P2). The proof comes from \cite[Lemma~5.3]{BBF}. It is enough to prove that given any two essential simple closed curves $x,y$ on $\Sigma$, there are only finitely many isotopy classes of subsurfaces $K'$ in the $\Map(\Sigma)$-orbit of $K$ such that $x$ and $y$ both intersect $K'$, and whenever $x',y'$ are simple closed curves that are contained and essential in $K'$, and are disjoint from $x$ and $y$, respectively, then $d_{\calc_\Sigma(K')}(x',y')>10$. Let $K_{xy}\subseteq \Sigma$ be the smallest subsurface of $\Sigma$ (of finite type) that contains $x$ and $y$: this is well-defined up to isotopy. If $K'$ cannot be isotoped to be contained in $K_{xy}$, then there is a curve or an arc in $K'\setminus K_{xy}$, so $d_{\calc(K')}(x',y')$ is bounded. We can thus restrict to only considering subsurfaces $K'$ with $K'\subseteq K_{xy}$, and in this case then the result follows from the finite type case \cite[Lemma~5.3]{BBF} -- again it is important to observe that the constants given by that lemma are independent from the topology of $K'$.
\end{proof}

Given a subsurface $K$ of $\Sigma$, we say that an element $f\in\Map(\Sigma)$ is \emph{$K$-pseudo-Anosov} if $f$ preserves the isotopy class of $K$ and, denoting by $\widehat{K}$ a surface obtained from $K$ by gluing a once-punctured disk on every boundary component of $K$, the mapping class $f$ induces a pseudo-Anosov mapping class of $\widehat{K}$.

\begin{theo}\label{theo:big-bbf}
Let $\Sigma$ be a connected orientable surface with $\xi(\Sigma)>0$, and assume that $\Sigma$ contains a nondisplaceable connected subsurface $K$ of finite type.

Then there exists an unbounded hyperbolic space $\mathbb{X}$ equipped with a continuous nonelementary isometric action of $\Map(\Sigma)$ such that every element of $\Map(\Sigma)$ which is $K$-pseudo-Anosov is a WWPD loxodromic element for the $\Map(\Sigma)$-action on $\mathbb{X}$.
\end{theo}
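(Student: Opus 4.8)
The plan is to set $\mathbb{X}:=\calc(\mathbb{Y}_K)$, the quasi-tree of metric spaces produced by the Bestvina--Bromberg--Fujiwara machinery, and to read off every required property from Theorem~\ref{theo:bbf}. Proposition~\ref{prop:bbf-family} already establishes that $(\mathbb{Y}_K,(\pi_Y(Z))_{Y\neq Z})$ is a BBF family for $\Map(\Sigma)$. The pieces of this family are the curve graphs $\calc_\Sigma(K')\cong\calc(\widehat{K'})$ of finite-type surfaces, and these are $\delta$-hyperbolic with a constant $\delta$ independent of the surface (uniform hyperbolicity of curve graphs, as in \cite{MM}); hence the pieces are uniformly hyperbolic and the hypotheses of Theorem~\ref{theo:bbf} hold. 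This gives at once a hyperbolic space $\mathbb{X}$ with an isometric $\Map(\Sigma)$-action in which each piece $\calc_\Sigma(K')$ embeds geodesically convexly.

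Before extracting the dynamical conclusions I would arrange, after possibly enlarging $K$, that $\widehat{K}$ has positive complexity. Replacing $K$ by a connected finite-type $K'\supseteq K$ preserves nondisplaceability (as noted just after the definition of nondisplaceable subsurfaces), and since $\Sigma$ is of infinite type one can choose $K'$ with $\xi(\widehat{K'})\geq 1$; in the degenerate case where $\widehat{K}$ carries no pseudo-Anosov the WWPD conclusion is vacuous, so this replacement only serves to build a nonelementary action. Under this assumption $\widehat{K}$ supports two independent pseudo-Anosov mapping classes $\psi_1,\psi_2$ (any surface carrying one pseudo-Anosov carries infinitely many independent ones). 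Realizing suitable pure powers of $\psi_1,\psi_2$ by homeomorphisms supported in $K$ and extending them by the identity on $\Sigma\setminus K$ produces two $K$-pseudo-Anosov elements $f_1,f_2\in\Map(\Sigma)$.

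For the WWPD statement, let $f$ be any $K$-pseudo-Anosov element. Then $f\in\Stab_{\Map(\Sigma)}(K)$, which by Lemma~\ref{lemma:fix-stab-curves} is exactly the setwise stabilizer $\Stab_{\Map(\Sigma)}(Y)$ of the piece $Y=\calc_\Sigma(K)$; by Lemma~\ref{lemma:ses-mcg} the quotient $\Stab_{\Map(\Sigma)}(Y)/\Fix_{\Map(\Sigma)}(Y)$ is a subgroup of $\Map(\widehat{K})$ acting on $Y\cong\calc(\widehat{K})$, and $f$ maps to a pseudo-Anosov of $\widehat{K}$. By Masur--Minsky \cite{MM,MM2} this image is loxodromic on $\calc(\widehat{K})$, and by Bestvina--Fujiwara \cite{BF} it is WPD for the action of $\Map(\widehat{K})$; since the WPD condition only shrinks when one restricts to a subgroup, $f$ is loxodromic WPD for $\Stab_{\Map(\Sigma)}(Y)/\Fix_{\Map(\Sigma)}(Y)\curvearrowright Y$. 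Theorem~\ref{theo:bbf}(3) then upgrades this to the conclusion that $f$ is loxodromic WWPD for $\Map(\Sigma)\curvearrowright\mathbb{X}$. Applying this to $f_1,f_2$ yields two loxodromics whose four fixed points lie in $\partial_\infty Y$; as $Y$ embeds geodesically convexly in $\mathbb{X}$, these points stay distinct in $\partial_\infty\mathbb{X}$, so $f_1,f_2$ are independent and the action is nonelementary, in particular unbounded.

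The step I expect to require the most care is continuity of the action, this being the genuinely infinite-type point since $\Map(\Sigma)$ is a non-discrete Polish group. Here I would use that $\Map(\Sigma)$ permutes the pieces of $\mathbb{Y}_K$ and the curves inside them, that $\mathbb{X}$ carries a $\Map(\Sigma)$-invariant cell structure whose $0$-cells are isotopy classes of simple closed curves, and that the stabilizer in $\Map(\Sigma)$ of any such curve (and more generally of any cell) is open: any mapping class with a representative restricting to the identity on a finite-type subsurface containing the relevant curves fixes those curves, so each such stabilizer contains a basic neighborhood of the identity. An isometric action on such a space with open cell-stabilizers is continuous, which completes the verification of all the asserted properties of $\mathbb{X}$.
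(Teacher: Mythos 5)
Your proposal is correct and follows essentially the same route as the paper's proof: the same quasi-tree $\calc(\mathbb{Y}_K)$ via Proposition~\ref{prop:bbf-family} and Theorem~\ref{theo:bbf}, the same reduction of WWPD to Bestvina--Fujiwara's WPD on $\calc(\widehat{K})$ through the identification of $\Stab_{\Map(\Sigma)}(\calc_\Sigma(K))/\Fix_{\Map(\Sigma)}(\calc_\Sigma(K))$ with a subgroup of $\Map(\widehat{K})$ (Lemmas~\ref{lemma:ses-mcg} and~\ref{lemma:fix-stab-curves}), and the same underlying continuity fact -- your open-stabilizer formulation is equivalent to the paper's ``images of curves and finite-type subsurfaces are eventually constant along convergent sequences.'' You supply some details the paper leaves implicit (the explicit pair of independent $K$-pseudo-Anosov elements for nonelementarity, and the degenerate case where $\widehat{K}$ carries no pseudo-Anosov); note only that uniform hyperbolicity of the pieces needs no deep input here, since all pieces are isomorphic as graphs to $\calc(K)$, so Masur--Minsky's hyperbolicity of that single curve graph suffices.
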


\begin{proof}
Let $K\subseteq \Sigma$ be a nondisplaceable connected subsurface of $\Sigma$ of finite type; without loss of generality we can assume that $K$ supports a pseudo-Anosov mapping class. All graphs $\calc_{\Sigma}([K'])$ with $[K']\in\Map(\Sigma)\cdot [K]$ are isomorphic to the curve graph of $K$; in particular, in view of work of Masur and Minsky \cite{MM}, they are all uniformly hyperbolic and unbounded. Theorem~\ref{theo:bbf} thus yields us an unbounded hyperbolic space $\mathbb{X}=\calc(\mathbb{Y}_K)$ associated to the BBF family $(\mathbb{Y}_K,(\pi_Y(Z))_{Y\neq Z\in\mathbb{Y}_K})$, on which $\Map(\Sigma)$ acts by isometries.

The action of $\Map(\Sigma)$ on $\mathbb{X}$ is continuous because if $f\in\Map(\Sigma)$ and $(f_n)_{n\in\mathbb{N}}\in\Map(\Sigma)^\mathbb{N}$ converges to $f$, then for every isotopy class $K$ of finite-type subsurfaces, the images $f_n(K)$ are eventually constant, and for every isotopy class $c$ of essential simple closed curves, the images $f_n(c)$ are eventually constant. 

We finally check that all $K$-pseudo-Anosov elements of $\Map(\Sigma)$ are loxodromic WWPD elements for the $\Map(\Sigma)$-action on $\mathbb{X}$. By work of Bestvina and Fujiwara \cite[Proposition~11]{BF}, the action of every pseudo-Anosov element of $\Map(\widehat{K})$ on the curve graph $\calc(\widehat{K})$ is WPD. The conclusion therefore follows from Theorem~\ref{theo:bbf}, together with the identification of $\Stab_{\Map(\Sigma)}(\calc_\Sigma(K))/\Fix_{\Map(\Sigma)}(\calc_\Sigma(K))$ with a subgroup of $\Map(\widehat{K})$, provided by Lemmas~\ref{lemma:ses-mcg} and~\ref{lemma:fix-stab-curves}.    
\end{proof}

\subsection{Application to bounded cohomology}\label{sec:cohomology}

Given a subgroup $G\subseteq\Map(\Sigma)$ and a subsurface $K\subseteq\Sigma$ of finite type, we let $\Stab_G(K)$ be the subgroup of $G$ made of all mapping classes that preserve the isotopy class of $K$ -- in other words $\Stab_G(K)=\Stab_{\Map(\Sigma)}(K)\cap G$.

\begin{de}\label{de:K-nonelementary}
Let $\Sigma$ be a connected orientable surface, and let $K\subseteq\Sigma$ be a finite-type subsurface. Let $\widehat{K}$ be a surface obtained from $K$ by gluing a once-punctured disk on every boundary component of $K$. 

A subgroup $G\subseteq\Map(\Sigma)$ is \emph{$K$-nonelementary} if the image of $\Stab_{G}(K)$ in $\Map(\widehat{K})$ contains two independent pseudo-Anosov mapping classes of $\widehat{K}$.
\end{de} 

A theorem of Handel and Mosher \cite[Theorem~2.10]{HM} asserts that for every group $G$ acting isometrically on a hyperbolic space with two independent loxodromic elements and at least one loxodromic WWPD element, the second bounded cohomology $H^2_b(G,\mathbb{R})$ contains an embedded $\ell^1$. As a consequence of Theorem~\ref{theo:big-bbf}, we thus obtain the following corollary, which generalizes earlier results of Bavard and Walker \cite[Theorem~9.1.1]{BW} and Rasmussen \cite[Corollary~1.2]{Ras}. Again, the case of finite-type surfaces is due to Bestvina and Fujiwara \cite{BF}.

\begin{cor}
Let $\Sigma$ be a connected orientable surface with $\xi(\Sigma)>0$, and assume that $\Sigma$ contains a nondisplaceable subsurface of finite type. Then $H^2_b(\Map(\Sigma),\mathbb{R})$ contains an embedded $\ell^1$.

More generally, let $K$ be a nondisplaceable connected subsurface of finite type of $\Sigma$, and let $H\subseteq\Map(\Sigma)$ be a subgroup which is $K$-nonelementary. Then $H^2_b(H,\mathbb{R})$ contains an embedded $\ell^1$. 
\qed
\end{cor}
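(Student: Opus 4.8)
The plan is to derive both assertions from Theorem~\ref{theo:big-bbf} together with the theorem of Handel and Mosher \cite[Theorem~2.10]{HM}, which guarantees that any group acting isometrically on a hyperbolic space with two independent loxodromic elements, at least one of which is WWPD, has $H^2_b(\cdot,\mathbb{R})$ containing an embedded $\ell^1$. I would treat the second (more general) statement as the main one and recover the first as the special case $H=\Map(\Sigma)$, once I check that $\Map(\Sigma)$ is $K$-nonelementary for a suitable nondisplaceable subsurface $K$.

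For that reduction, I would first fix a connected nondisplaceable subsurface $K_0\subseteq\Sigma$ of finite type (it exists by hypothesis, and can be chosen connected), and then enlarge it, using $\xi(\Sigma)>0$, to a connected finite-type subsurface $K\supseteq K_0$ whose capped surface $\widehat K$ satisfies $\xi(\widehat K)\ge 1$; such a $K$ is still nondisplaceable since it contains $K_0$. Mapping classes of $\Sigma$ supported on $K$ lie in $\Stab_{\Map(\Sigma)}(K)$ and, through the homomorphism of Lemmas~\ref{lemma:ses-mcg} and~\ref{lemma:fix-stab-curves}, have image in $\Map(\widehat K)$ containing all pseudo-Anosov mapping classes of $\widehat K$ that fix each puncture coming from a boundary component of $K$. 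Since $\xi(\widehat K)\ge 1$, there are two independent such pseudo-Anosov classes, so $\Map(\Sigma)$ is $K$-nonelementary, and the first statement follows from the second.

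To prove the general statement, I would take $K$ nondisplaceable connected of finite type and $H\subseteq\Map(\Sigma)$ a $K$-nonelementary subgroup, and pick $h_1,h_2\in\Stab_H(K)$ inducing independent pseudo-Anosov mapping classes $\bar h_1,\bar h_2$ of $\widehat K$; these $h_i$ are in particular $K$-pseudo-Anosov elements of $\Map(\Sigma)$. Applying Theorem~\ref{theo:big-bbf} produces a hyperbolic space $\mathbb X$ on which $\Map(\Sigma)$ — hence $H$ — acts by isometries, and on which $h_1$ and $h_2$ are loxodromic WWPD elements for the $\Map(\Sigma)$-action. I would then observe that the WWPD property only asserts discreteness of the orbit of the pair of fixed points in $(\partial_\infty\mathbb X)^2\setminus\Delta$, and that this orbit only shrinks on passing to the subgroup $H$, so $h_1,h_2$ remain loxodromic WWPD for the $H$-action. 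Finally, to see that $h_1$ and $h_2$ are independent in $\mathbb X$: each acts on the copy of $\calc(\widehat K)$ embedded in $\mathbb X$ as $\calc_\Sigma(K)$ via the loxodromic isometry $\bar h_i$, and since $\calc_\Sigma(K)$ is geodesically convex in $\mathbb X$ (Theorem~\ref{theo:bbf}) its Gromov boundary embeds into $\partial_\infty\mathbb X$; thus $h_i$ is loxodromic in $\mathbb X$ with fixed-point set $\{h_i^{\pm\infty}\}=\{\bar h_i^{\pm\infty}\}\subseteq\partial_\infty\calc_\Sigma(K)$, and independence of $\bar h_1,\bar h_2$ in $\calc(\widehat K)$ forces independence of $h_1,h_2$ in $\mathbb X$. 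So $H$ acts on $\mathbb X$ with two independent loxodromic elements, one of them WWPD, and \cite[Theorem~2.10]{HM} yields the embedded $\ell^1$ in $H^2_b(H,\mathbb R)$.

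The substance of the argument is in Theorem~\ref{theo:big-bbf}, so I expect no serious obstacle here; the points needing care are the bookkeeping in the reduction step (enlarging $K$ while keeping it nondisplaceable and acquiring enough complexity to carry two independent pseudo-Anosovs) and the transfer of the two dynamical inputs — that WWPD-ness survives restriction to $H$ (immediate, a subset of a discrete set being discrete) and that independence inside a single curve graph $\calc(\widehat K)$ survives the passage to the ambient quasi-tree of metric spaces $\mathbb X$, which is exactly where the geodesic convexity of the pieces provided by Theorem~\ref{theo:bbf} is used.
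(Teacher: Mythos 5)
Your proposal is correct and follows exactly the route the paper intends: the corollary is stated with no written proof beyond the preceding remark that it is "a consequence of Theorem~\ref{theo:big-bbf}" combined with Handel--Mosher \cite[Theorem~2.10]{HM}. The details you supply — enlarging the nondisplaceable subsurface to acquire independent pseudo-Anosovs, the (immediate) persistence of WWPD under passage to a subgroup, and the use of geodesic convexity of $\calc_\Sigma(K)$ in $\mathbb{X}$ to transfer independence — are precisely the bookkeeping the authors leave implicit, and they all check out.
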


\section{Normal subgroups via geometric small cancellation}\label{sec:largeness}

The goal of the present section is to prove Theorem~\ref{theointro:largeness} from the introduction: if $\Sigma$ contains a nondisplaceable subsurface of finite type, then $\Map(\Sigma)$ contains uncountably many normal subgroups, including nonabelian free normal subgroups. This is proved in Section~\ref{sec:sq} below, after we recalled background on geometric small cancellation as developed by Dahmani, Guirardel and Osin \cite{DGO} in Section~\ref{sec:dgo}. Finally, in Section~\ref{sec:normal-free-subgroups}, we show that in fact $\Map(\Sigma)$ contains nontrivial normal free subgroups exactly when $\Sigma$ has nondisplaceable subsurfaces of finite type.

\subsection{Review on geometric small cancellation}\label{sec:dgo}

We now review work of Dahmani, Guirardel and Osin \cite{DGO} that will be used in the next section, following the exposition from \cite{Gui} and \cite{Cou}. 

Let $X$ be a hyperbolic metric space equipped with an isometric action of a group $G$, and let $\delta\ge 0$ be the hyperbolicity constant of $X$. Given a subset $Q\subseteq X$ and $r\ge 0$, we denote by $Q^{+r}$ the closed $r$-neighborhood of $Q$. Given two subsets $Q_1,Q_2\subseteq X$, we define the \emph{overlap} of $Q_1$ and $Q_2$ as $$\Delta_X(Q_1,Q_2):=\diam(Q_1^{+20\delta}\cap Q_2^{+20\delta}).$$

We say that $Q$ is \emph{almost convex} if given any two points $x,y\in Q$, there exist $x',y'\in Q$ with $d(x,x')\le 8\delta$ and $d(y,y')\le 8\delta$, such that there exist geodesic segments $[x,x'],[x',y'],[y',y]$ that are contained in $Q$.

A \emph{moving pair} for the $G$-action on $X$ is a pair $(H,Q)$, where $Q$ is an almost convex subset of $X$ and $H$ is a normal subgroup of the setwise $G$-stabilizer of $Q$, which we denote by $\Stab_G(Q)$.
The \emph{injectivity radius} of the pair $(H,Q)$ is defined as $$\inj_X(H,Q)=\inf\{d_X(x,hx)|x\in Q, h\in H \setminus\{1\}\},$$ and its \emph{fellow traveling constant} is $$\Delta^*_X(H,Q)=\sup\{\Delta(Q,tQ)|t\in G\setminus\Stab_G(Q)\}.$$ Given $A>0$ and $\epsilon>0$, we say that a moving pair $(H,Q)$ satisfies the \emph{$(A,\epsilon)$-small cancellation condition} if $\inj_X(H,Q)\ge A$ and $\Delta^*_X(H,Q)\le\epsilon\cdot\inj_X(H,Q)$.

\begin{figure}
\centering
\input{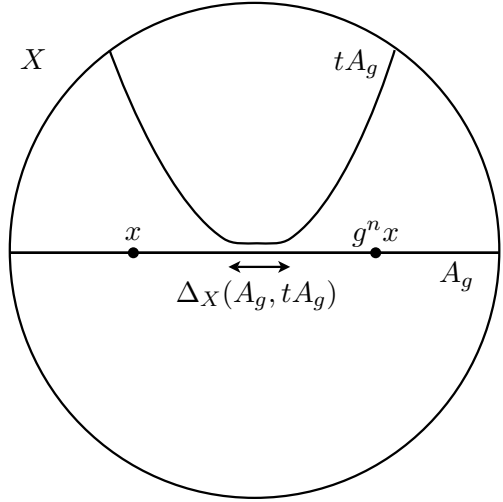}
\caption{The small cancellation condition.}
\label{fig:dgo}
\end{figure}

\begin{ex}
A first example to keep in mind is the following: let $G$ be a group acting on a hyperbolic space $X$, and let $g\in G$ be a WPD loxodromic element. For simplicity, let us assume that $G$ is torsion-free and that $g$ has an invariant axis $A_g$ on which it acts by translation -- for example $G$ could be a surface group acting properly discontinuously on the hyperbolic plane $\mathbb{H}^2$. Then for every $n\in\mathbb{N}$, the pair $(A_g,\langle g^n\rangle)$ is a moving pair. Using the WPD property, we can always choose $n\in\mathbb{N}$ large enough to ensure that the overlap between distinct $G$-translates of $A_g$ is always small compared to the translation length of $g^n$ (as in Figure~\ref{fig:dgo}). In other words, for every $A>0$ and $\epsilon>0$, there exists $n\in\mathbb{N}$ so that the moving pair $(A_g,\langle g^n\rangle)$ satisfies the $(A,\epsilon)$-small cancellation condition.
\end{ex}

We define the \emph{translation length} of an isometry $g$ of $X$ as $||g||_X=\inf_{x\in X}d(x,gx)$. The following theorem is (in a slightly simplified version) the main theorem of geometric small cancellation theory, due to Dahmani, Guirardel and Osin \cite{DGO} (see also \cite[Theorem~1.3]{Gui}).

\begin{theo}[Dahmani--Guirardel--Osin \cite{DGO}]\label{theo:dgo}
Let $G$ be a group, and let $X$ be a hyperbolic metric space equipped with an isometric $G$-action. For every $C\ge 0$, there exist $A=A(C)>0$ and $\epsilon=\epsilon(C)>0$ such that for every moving pair $(H,Q)$ satisfying the $(A,\epsilon)$-small cancellation condition, the following hold:
\begin{enumerate}
\item there exists a subset $Z\subseteq G$ such that the normal subgroup $\langle\langle H\rangle\rangle$ of $G$ generated by $H$ satisfies $\langle\langle H\rangle\rangle=\ast_{t\in Z} (tHt^{-1})$;
\item the projection $G\to G/\langle\langle H\rangle\rangle$ induces an injective homomorphism $\Stab_G(Q)/H\to G/\langle\langle H\rangle\rangle$; 
\item all nontrivial elements of $\langle\langle H\rangle\rangle$ act loxodromically on $X$ with translation length at least $C$.
\end{enumerate}
\end{theo}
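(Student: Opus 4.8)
The plan is to follow the cone-off strategy of Dahmani, Guirardel and Osin. Fix $C$ and recall that $\delta$ is the hyperbolicity constant of $X$. First I would attach to $X$, for every $G$-translate $tQ$ of $Q$, a hyperbolic cone of some large radius over $tQ$, forming a space $\dot X$ whose \emph{apexes} are the tips of these cones. The role of the $(A,\epsilon)$-small cancellation hypothesis is twofold. On the one hand, the bound $\inj_X(H,Q)\ge A$ lets one arrange that the rotation subgroups $tHt^{-1}$ act around the corresponding apexes as a \emph{very rotating family} in the sense of \cite{DGO}: every nontrivial element of $tHt^{-1}$ moves points near the apex by a definite amount and turns geodesics through the apex by a large angle. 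On the other hand, the bound $\Delta^*_X(H,Q)\le\epsilon\cdot\inj_X(H,Q)$ forces distinct translates of $Q$ to overlap by a small amount compared to $A$, which is what one needs in order to show that $\dot X$ is again hyperbolic with a constant depending only on $\delta$ (in particular not on $A$). Choosing $A$ large and $\epsilon$ small in terms of $C$ and $\delta$ then reduces everything to the study of the $G$-action on $\dot X$ together with this very rotating family.

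On $\dot X$ I would then run the standard rotating-family machinery. A Cartan--Hadamard-type argument shows that $\dot X/\langle\langle H\rangle\rangle$ is hyperbolic, and a Greendlinger-type lemma shows that a geodesic word in generators drawn from the factors $tHt^{-1}$ is trivial in $G$ only if it is already trivial as a word in the free product of these factors; choosing a transversal $Z$, this yields the decomposition $\langle\langle H\rangle\rangle=\ast_{t\in Z}(tHt^{-1})$ of conclusion~(1). Applying the same local injectivity of the quotient map to loops based near an apex shows that an element of $\Stab_G(Q)$ mapping trivially to $G/\langle\langle H\rangle\rangle$ already lies in $H$, which is conclusion~(2).

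For conclusion~(3), I would take a nontrivial $g\in\langle\langle H\rangle\rangle$ and write it in reduced form as a product of elements of the factors $t_iHt_i^{-1}$. In $\dot X$ such a $g$ admits a quasi-axis passing successively near the corresponding apexes, with translation length bounded below linearly in the syllable length of $g$. One then transfers this estimate back to $X$: a geodesic of $X$ realizing $d_X(x,gx)$ must, between two consecutive apexes along this axis, pass through the corresponding translate of $Q$, and since the element of $H$ acting there has injectivity radius at least $A$, this contributes at least roughly $A$ to the $X$-length — bypassing the translate being impossible once $A$ dominates $\delta$ and the overlap is small. Summing over syllables gives $\|g\|_X\ge C$, and in particular $g$ is loxodromic on $X$.

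The hardest part will be the uniformity and the combinatorics of the cone-off: one must exhibit $A(C)$ and $\epsilon(C)$ depending only on $C$ and $\delta$, which requires proving hyperbolicity of $\dot X$ with a constant independent of $A$, deriving the very-rotating property from the injectivity-radius lower bound, and above all establishing the Greendlinger lemma, since it is this lemma that simultaneously forces $\langle\langle H\rangle\rangle$ to split as the stated free product and prevents its nontrivial elements from collapsing to elliptic or short isometries of $X$. As this is precisely the content of \cite{DGO}, we will only lay out this structure and refer to \cite{DGO}, and to the expositions \cite{Gui,Cou}, for the details.
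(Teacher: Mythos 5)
Your outline accurately summarizes the cone-off/very-rotating-family strategy of Dahmani--Guirardel--Osin and correctly defers the technical details to \cite{DGO} (and the expositions \cite{Gui,Cou}); this matches the paper, which gives no proof of this statement beyond citing those same sources. No gap to report.
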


Following \cite{DGO}, given a group $G$ and an element $g\in G$, we define the \emph{elementary subgroup} $\mathcal{E}_G(g)$ as $$\mathcal{E}_G(g)=\{h\in G|\exists n,m\in\mathbb{Z}\setminus\{0\}, g^m=hg^nh^{-1}\}.$$ We will need the following proposition, which provides a situation where one can apply the geometric small cancellation theorem. See \cite[Section~6.2]{DGO} or \cite[Proposition~2.15]{Gui}. 

\begin{prop}\label{prop:small-cancellation-free-subgroup}
Let $G$ be a group, and let $X$ be a hyperbolic metric space equipped with a nonelementary isometric $G$-action. Let $H\subseteq G$ be a subgroup. Assume that $H$ is not cyclic, and that there exists $g\in H$ which acts loxodromically on $X$, has the WPD property with respect to the $G$-action on $X$, and such that $\mathcal{E}_G(g)$ is isomorphic to $\mathbb{Z}$. 

Then for every $A>0$ and every $\epsilon>0$, there there exists a rank $2$ free subgroup $F\subseteq H$ and an almost convex subspace $Q_F\subseteq X$ with $\Stab_G(Q_F)=F$, such that $(F,Q)$ satisfies the $(A,\epsilon)$-small cancellation condition. 
\end{prop}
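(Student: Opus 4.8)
The goal is to produce, inside the non-cyclic subgroup $H$, a rank-$2$ free subgroup $F$ together with an almost convex $Q_F$ whose $G$-stabilizer is exactly $F$ and which satisfies the $(A,\epsilon)$-small cancellation condition. The natural candidate for $Q_F$ is (a suitable neighborhood of) an orbit of $F$, built from the axis of $g$. First I would fix the WPD loxodromic element $g\in H$ with $\mathcal{E}_G(g)\cong\bbZ$. Since $H$ is non-cyclic and contains the loxodromic $g$, and the $G$-action on $X$ is nonelementary, by the standard ping-pong / Tits-alternative argument for groups acting on hyperbolic spaces there exists $h\in H$ such that $h$ does not fix the pair $\{g^{-\infty},g^{+\infty}\}$; then for large $N$ the elements $g^N$ and $hg^Nh^{-1}$ are independent loxodromics generating a rank-$2$ free subgroup $F_0\subseteq H$. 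The point of passing to $g^N$ (for $N$ chosen depending on $A,\epsilon$) is to make the translation lengths — hence the injectivity radius of the pair — as large as we wish compared to any fixed overlap constant.

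**The main steps.** (1) Set $Q$ to be a quasiconvex neighborhood of an $F_0$-orbit, or more concretely the union of quasi-axes of the generators thickened by a uniform constant; replace it by a genuinely almost convex set by the standard trick of taking a controlled neighborhood of a connected union of geodesics (almost convexity with constant $8\delta$ is a soft consequence of hyperbolicity once $Q$ is built from geodesic segments joining orbit points). (2) Arrange $\Stab_G(Q) = F$: here is where $\mathcal{E}_G(g)\cong\bbZ$ enters. An element $t\in G$ stabilizing $Q$ coarsely must permute the endpoints at infinity of the axes of the $F_0$-generators; using that the elementary closures are cyclic and that $F_0$ is a free group acting properly cocompactly on its own Cayley graph (which quasi-isometrically embeds via the orbit map, by the standard Bestvina–Fujiwara WPD argument), one shows $\Stab_G(Q)$ is contained in the "virtual normalizer" of $F_0$, which equals $F_0$ itself after possibly enlarging $F_0$ slightly (replace the generators by high powers so that $\mathcal{E}_G$ of each generator is the cyclic group it generates inside $F_0$). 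This is the step to be careful with. (3) Estimate the two small-cancellation quantities: $\inj_X(F,Q)\geq A$ is arranged by taking $N$ large, since every nontrivial $f\in F$ is a loxodromic of translation length comparable to a positive power times $\|g^N\|_X$, and moves every point of $Q$ by at least (roughly) its translation length minus a uniform constant; and $\Delta^*_X(F,Q)=\sup_{t\notin\Stab_G(Q)}\Delta_X(Q,tQ)$ is bounded above by a constant $B=B(g)$ independent of $N$ — this is exactly the WPD-type bounded-overlap statement (two distinct $G$-translates of a neighborhood of the axis of a WPD element fellow-travel only a bounded amount, by \cite{BF}-style arguments / \cite[Section~6]{DGO}). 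Then choosing $N$ with $A\leq \epsilon\cdot(\text{const}\cdot N\|g\|_X - \text{const})$ and $\geq A$ gives $\Delta^*_X(F,Q)\leq B\leq\epsilon\cdot\inj_X(F,Q)$, as required.

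**The main obstacle.** The genuinely delicate point is step (2), getting $\Stab_G(Q_F)$ to be exactly $F$ and not merely commensurable to it or equal to some finite extension. This is where one really uses the hypothesis $\mathcal{E}_G(g)\cong\bbZ$ (so in particular $g$ — and its powers, and by a ping-pong argument the other generator and its powers — have no "hidden" symmetries in $G$), together with the freeness of $F_0$ to rule out a nontrivial element of $G$ conjugating $F_0$ into itself nontrivially. Once $Q_F$ is chosen as an $F$-invariant almost convex set with $\Stab_G(Q_F)=F$, one has a moving pair $(F,Q_F)$ in the sense of Section~\ref{sec:dgo}, and the estimates above give the $(A,\epsilon)$-small cancellation condition; this is the content of \cite[Section~6.2]{DGO} / \cite[Proposition~2.15]{Gui}, which we are essentially reproving in the form stated. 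I would present (2) carefully and refer to those sources for the remaining routine verifications.
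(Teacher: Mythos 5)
A preliminary remark: the paper does not actually prove this proposition --- it quotes it from \cite[Section~6.2]{DGO} and \cite[Proposition~2.15]{Gui} --- so your proposal has to be measured against the construction in those sources. Your skeleton is the right one: produce $h\in H$ with $h\{g^{-\infty},g^{+\infty}\}\neq\{g^{-\infty},g^{+\infty}\}$ (valid, since otherwise $H\subseteq\Stab_G(\{g^{-\infty},g^{+\infty}\})=\mathcal{E}_G(g)\cong\mathbb{Z}$ would be cyclic), build a rank~$2$ free group by ping-pong, take $Q$ to be an almost convex hull of an orbit, get the injectivity radius from high powers and the overlap bound from WPD. But the specific choice $F_0=\langle g^N, hg^Nh^{-1}\rangle$ does not work, and the failure is precisely at the point you flagged as delicate. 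Any almost convex set $Q$ containing an orbit of $\langle g^N\rangle$ coarsely contains the entire quasi-axis of $g^N$, which coincides with the quasi-axis of $g$. The element $g$ translates this axis onto itself, so $Q$ and $gQ$ overlap along a bi-infinite quasi-geodesic and $\Delta_X(Q,gQ)=\infty$. Since you need $\inj_X(F,Q)\geq A$ with $A$ arbitrarily large, you cannot have $g\in F$ (it moves points on its axis by only about $\|g\|_X$), so $g\notin\Stab_G(Q)=F$ and hence $\Delta^*_X(F,Q)=\infty$: the $(A,\epsilon)$-condition fails outright. In particular the claim in your step (3) that $\Delta^*_X(F,Q)\leq B(g)$ independently of $N$ is false for these generators.

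Your proposed repair in step (2) --- ``replace the generators by high powers so that $\mathcal{E}_G$ of each generator is the cyclic group it generates'' --- goes in the wrong direction: directly from the definition one checks $\mathcal{E}_G(g^N)=\mathcal{E}_G(g)$, which strictly contains $\langle g^N\rangle$ for $N\geq 2$, and likewise for conjugates; taking powers only worsens the discrepancy. The fix used in \cite[Section~6.2]{DGO} and \cite{Gui} is to take the generators of $F$ to be alternating products such as $a=g^{N}\,hg^{N}h^{-1}$ and $b=g^{2N}\,hg^{2N}h^{-1}$ (still elements of $H$): for $N$ large these are loxodromic WPD elements whose quasi-axes are broken geodesics alternating between translates of the axes of $g$ and of $hgh^{-1}$, and the rigidity of this pattern together with $\mathcal{E}_G(g)\cong\mathbb{Z}$ yields $\mathcal{E}_G(a)=\langle a\rangle$ and $\mathcal{E}_G(b)=\langle b\rangle$. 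With such generators no coset of $\mathcal{E}_G(g)$ outside $F$ can slide $Q$ along itself, the equality $\Stab_G(Q)=F$ becomes provable, and the overlap of $Q$ with $tQ$ for $t\notin F$ is bounded in terms of the WPD constants of $g$ alone, so that $\Delta^*_X(F,Q)\leq\epsilon\cdot\inj_X(F,Q)$ is achieved by taking $N$ large. The remaining parts of your outline (almost convexity of the hull, the injectivity radius estimate via the quasi-isometric embedding of $F$) are fine.
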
 

\subsection{Abundance of normal subgroups}\label{sec:sq}

Given a group $G$ and an element $g\in G$, we say that $g$ is \emph{symmetryless} if $\mathcal{E}_G(g)=\langle g\rangle$. We recall the definition of a $K$-nonelementary subgroup from Definition~\ref{de:K-nonelementary}

\begin{de}
Let $\Sigma$ be a connected orientable surface, and let $K\subseteq\Sigma$ be a finite-type subsurface. Let $\widehat{K}$ be a surface obtained from $K$ by gluing a once-punctured disk on every boundary component of $K$. 

A subgroup $G\subseteq\Map(\Sigma)$ is \emph{$K$-generic} if $G$ is $K$-nonelementary, and in addition $G$ contains an element $g$ supported on $K$ whose image in $\Map(\widehat{K})$ is a symmetryless pseudo-Anosov mapping class. 
\end{de}

Notice that when $\Sigma$ is an infinite-type surface that contains a nondisplaceable subsurface of finite type, one can always find a nondisplaceable subsurface $K\subseteq\Sigma$ of finite type such that $\Map(\Sigma)$ is $K$-generic -- this can be viewed using \cite[Lemma~6.18]{DGO}, for instance. This condition is also satisfied by many interesting subgroups of $\Map(\Sigma)$, such as the Torelli subgroup, the pure mapping class group (acting trivially on the space of ends), or the countable subgroup made of finitely supported elements, for instance.

We would also like to observe that a $K$-generic subgroup $G$ of $\Map(\Sigma)$ actually contains many elements that are supported on $K$: indeed, starting from a pseudo-Anosov element $f$ supported on $K$ and an element $g$ that preserves $K$ and induces a pseudo-Anosov mapping class of $\widehat{K}$, one builds a new element supported on $K$ by considering the commutator $[f,g]$.

Recall that a group $G$ is \emph{SQ-universal} if every countable group embeds in some quotient of $G$. The goal of the present section is to prove the following theorem, which is an elaboration on Theorem~\ref{theointro:largeness} from the introduction.

\begin{theo}\label{theo:sq}
Let $\Sigma$ be a connected orientable surface with $\xi(\Sigma)>0$, and assume that $\Sigma$ contains a nondisplaceable subsurface $K$ of finite type. Let $G\subseteq\Map(\Sigma)$ be a $K$-generic subgroup. Then the following conclusions hold.
\begin{enumerate}
\item The group $G$ contains a normal nonabelian free subgroup.
\item There exists a rank $2$ free subgroup $F\subseteq G$ such that for every countable group $\Gamma$, there exists a quotient $\theta:G\twoheadrightarrow \overline{G}$ such that $\Gamma$ embeds in $\theta(F)$. In particular $G$ is SQ-universal and contains uncountably many normal subgroups.
\end{enumerate}
\end{theo}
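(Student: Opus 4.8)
The plan is to apply the geometric small cancellation machinery of Dahmani, Guirardel and Osin (Theorem~\ref{theo:dgo}) \emph{twice}, in a nested fashion, to produce a normal free subgroup with the strong embedding property claimed in item~(2); item~(1) is then an immediate byproduct.

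\medskip

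\textbf{Step 1: A free subgroup supported on $K$ with controlled elementary subgroups inside $\Map(\widehat{K})$.} Since $G$ is $K$-generic, $\Stab_G(K)$ maps to $\Map(\widehat{K})$ with image containing two independent pseudo-Anosov classes, and $G$ contains an element $g$ supported on $K$ whose image $\bar g\in\Map(\widehat{K})$ is a symmetryless pseudo-Anosov. By Bestvina--Fujiwara \cite{BF} (already invoked in the proof of Theorem~\ref{theo:big-bbf}), pseudo-Anosov classes act loxodromically and WPD on $\calc(\widehat{K})$, and for a symmetryless pseudo-Anosov the elementary subgroup $\mathcal{E}_{\Map(\widehat{K})}(\bar g)$ is infinite cyclic. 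The image $\bar G$ of $\Stab_G(K)$ in $\Map(\widehat{K})$ is therefore a noncyclic group acting nonelementarily on $\calc(\widehat{K})$ with a WPD element $\bar g$ whose elementary subgroup is $\mathbb{Z}$, so Proposition~\ref{prop:small-cancellation-free-subgroup} applies: for any $A_1,\epsilon_1$ we get a rank $2$ free subgroup $\bar F\subseteq\bar G$ and an almost convex $Q_{\bar F}\subseteq\calc(\widehat{K})$ with $\Stab_{\bar G}(Q_{\bar F})=\bar F$ satisfying the $(A_1,\epsilon_1)$-small cancellation condition. One should choose the constants, via Theorem~\ref{theo:dgo} applied to the $\bar G$-action on $\calc(\widehat{K})$, large enough that the resulting normal closure $\langle\langle\bar F\rangle\rangle$ in $\bar G$ is a free product of conjugates of $\bar F$ (hence free), and so that every nontrivial element of it acts loxodromically on $\calc(\widehat{K})$ with translation length $\ge$ any prescribed bound. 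Lifting $\bar F$ back through the surjection $\Stab_G(K)\to\bar G$ and intersecting the normal closure appropriately, one extracts a rank $2$ free subgroup $F_0\subseteq\Stab_G(K)$, supported on $K$ (replacing generators by suitable commutators with $g$ as in the remark preceding Theorem~\ref{theo:sq} if needed), all of whose nontrivial elements are $K$-pseudo-Anosov in the sense of Theorem~\ref{theo:big-bbf}.

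\medskip

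\textbf{Step 2: A second round of small cancellation inside $\mathbb{X}=\calc(\mathbb{Y}_K)$.} Now bring in the hyperbolic space $\mathbb{X}$ of Theorem~\ref{theo:big-bbf} on which $\Map(\Sigma)$, hence $G$, acts nonelementarily and continuously, and on which every $K$-pseudo-Anosov element is loxodromic WWPD. The curve graph $\calc_\Sigma(K)$ sits inside $\mathbb{X}$ as a geodesically convex subspace (Theorem~\ref{theo:bbf}(1)), and the finite-support elements $F_0$ all preserve $\calc_\Sigma(K)$ and act on it as pseudo-Anosovs of $\widehat K$; by Theorem~\ref{theo:bbf}(3) they are WWPD for the $G$-action on $\mathbb{X}$. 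The idea is to take as the second moving pair $(F,Q)$ a free subgroup $F\subseteq F_0$ of rank $2$ together with an almost convex neighborhood $Q$ of an orbit of $F$ inside $\mathbb{X}$, arranged so that $\Stab_G(Q)=F$. To make this work I would use that elements of $F_0$ are loxodromic WWPD on $\mathbb{X}$, so their quasi-axes have uniformly bounded overlaps with their non-stabilizing $G$-translates (this is exactly what discreteness of the orbit of the fixed-point pair gives), and then apply the WWPD version of Proposition~\ref{prop:small-cancellation-free-subgroup} — or argue directly from Theorem~\ref{theo:dgo} — to obtain, for suitable constants $A_2,\epsilon_2$ dictated by the target translation length $C=0$ (we just need loxodromicity), a rank $2$ free $F\subseteq F_0$ and $Q$ with $(F,Q)$ an $(A_2,\epsilon_2)$-small cancellation moving pair for the $G$-action on $\mathbb{X}$, with $\Stab_G(Q)=F$.

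\medskip

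\textbf{Step 3: Extracting the conclusions.} Apply Theorem~\ref{theo:dgo} to the $G$-action on $\mathbb{X}$ with this moving pair. Conclusion~(1) is immediate: $N:=\langle\langle F\rangle\rangle$ is a free product of conjugates of the rank $2$ free group $F$, hence a nontrivial normal free subgroup of $G$. For conclusion~(2), given a countable group $\Gamma$, embed $\Gamma$ into a quotient of $F$ — e.g.\ using that the rank $2$ free group is SQ-universal, so there is a surjection $F\twoheadrightarrow Q_\Gamma$ with $\Gamma\hookrightarrow Q_\Gamma$ (and one may even arrange $Q_\Gamma$ to be generated by two elements). Let $H\unlhd F$ be the corresponding kernel; then $(H,Q)$ is again a moving pair with the \emph{same} $Q$, and its injectivity radius is at least that of $(F,Q)$, so after rerunning Step~2's estimates with $C$ replaced by whatever Theorem~\ref{theo:dgo} demands we may assume $(H,Q)$ still satisfies the small cancellation condition. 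Then $\langle\langle H\rangle\rangle\unlhd G$ and Theorem~\ref{theo:dgo}(2) gives an injection $\Stab_G(Q)/H=F/H\cong Q_\Gamma\hookrightarrow G/\langle\langle H\rangle\rangle$, so $\Gamma$ embeds in $\theta(F)\subseteq\overline G:=G/\langle\langle H\rangle\rangle$, proving SQ-universality. Finally, uncountably many normal subgroups: vary $\Gamma$ over the uncountably many isomorphism types of $2$-generated groups (or directly over uncountably many normal subgroups of $F$), noting that distinct $H\unlhd F$ give distinct $\langle\langle H\rangle\rangle\unlhd G$ because, by Theorem~\ref{theo:dgo}(1), $\langle\langle H\rangle\rangle\cap F=H$ (each $\langle\langle H\rangle\rangle$ is a free product of $G$-conjugates of $H$, and one recovers $H$ as the intersection with $\Stab_G(Q)=F$).

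\medskip

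\textbf{Main obstacle.} The delicate point is Step~2: producing the \emph{second} moving pair inside $\mathbb{X}$ with $\Stab_G(Q)=F$ exactly, using only the WWPD (not WPD) property. WPD would let us cite Proposition~\ref{prop:small-cancellation-free-subgroup} directly, but here the stabilizers of fixed-point pairs of $K$-pseudo-Anosov elements in $\mathbb{X}$ are genuinely large (containing $\Fix_{\Map(\Sigma)}(K)$, which is huge), so one cannot expect $\mathcal{E}_G(g)\cong\mathbb{Z}$ for the $\mathbb{X}$-action. The fix is that we do not need the elementary-subgroup hypothesis to run Theorem~\ref{theo:dgo} itself — we only need a genuine moving pair with good small cancellation constants. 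Thus the real work is: (i) upgrade discreteness of the orbit of $(g^{-\infty},g^{+\infty})$ to a uniform bound on overlaps $\Delta_{\mathbb{X}}(Q,tQ)$ for $t\notin\Stab_G(Q)$, after possibly replacing $F$ by a finite-index (still rank $2$ free) subgroup and enlarging $Q$; and (ii) verify $\Stab_G(Q)=F$, which follows once $Q$ is taken to be a suitably chosen almost convex hull of an $F$-orbit that is ``too thin'' to be preserved by anything outside $F$ — here one uses that $F\subseteq F_0$ consists of finite-support mapping classes, so an element preserving $Q$ must preserve the relevant configuration of curves inside $\calc_\Sigma(K)$ and hence lie in $\Stab_G(K)$, at which point Step~1's control over $\bar F$ pins it down to $F$.
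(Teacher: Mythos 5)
Your overall architecture --- two nested applications of the Dahmani--Guirardel--Osin theorem, first on $\calc(\widehat{K})$ to manufacture a rank $2$ free group of elements supported on $K$, then on the quasi-tree $\mathbb{X}$ of Theorem~\ref{theo:big-bbf} --- is the paper's. Your Step 1 matches the paper in spirit, though the paper makes the lift to $\Stab_G(K)$ canonical by first passing to the normal closure $\langle\langle g^n\rangle\rangle$ of a power of the symmetryless element, which consists of elements supported on $K$ and maps bijectively onto its image in $\Map(\widehat{K})$, and only then applying Proposition~\ref{prop:small-cancellation-free-subgroup} inside that image.

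The genuine gap is in your Step 2, and you have correctly located it without resolving it. You want an almost convex $Q\subseteq\mathbb{X}$, built from an $F$-orbit, with $\Stab_G(Q)=F$. This is impossible: $\Fix_G(K)$ fixes $\calc_\Sigma(K)$ pointwise (Lemma~\ref{lemma:fix-stab-curves}), hence fixes any $F$-orbit in $\calc_\Sigma(K)$ pointwise and preserves every metrically defined hull or neighborhood of it in $\mathbb{X}$; since $\Fix_G(K)$ is enormous for an infinite-type surface (it contains everything supported off $K$, for instance), $\Stab_G(Q)$ can never be cut down to $F$. Your proposed fix (ii) --- that Step 1's control over $\bar F$ ``pins it down to $F$'' --- only controls the image in $\Map(\widehat{K})$, not the kernel $\Fix_G(K)$ of $\Stab_G(K)\to\Map(\widehat{K})$.

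The paper sidesteps this entirely: the second moving pair is $(N',\calc_\Sigma(K))$, where the almost convex set is the \emph{whole} curve graph $\calc_\Sigma(K)$, geodesically convex in $\mathbb{X}$ with $G$-stabilizer exactly $\Stab_G(K)$ and with overlaps with its translates uniformly bounded by the constant $D$ of Theorem~\ref{theo:bbf}; and $N'$ is the normal closure in $\Stab_G(K)$ of the normal subgroup $N\unlhd F$ with $\Gamma\hookrightarrow F/N$. The first round of small cancellation (Lemma~\ref{lemma:small-cancellation-on-curve-graph}) is precisely what guarantees both that $F/N$ still embeds into $\Stab_G(K)/N'$ and that every nontrivial element of $N'$ acts on $\calc_\Sigma(K)$ with translation length at least $L$, which supplies the injectivity radius for the second application. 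The conclusion then follows from the chain $\Gamma\hookrightarrow F/N\hookrightarrow\Stab_G(K)/N'\hookrightarrow G/N''$. To salvage your version you would have to abandon the requirement $\Stab_G(Q)=F$ and, as the paper does, push normality up to the actual stabilizer by taking normal closures there.
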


Here is a comment about the second conclusion: the last part of this conclusion, namely, that $G$ contains uncountably many normal subgroups, follows from the first in the following way. Since $F$ is countable, any given quotient of $F$ is countable, and therefore can only contain countably many $2$-generated subgroups. On the other hand, there are uncountably many $2$-generated groups, and every such group embeds in a quotient of $F$ of the form $F/(F\cap N)$ for some normal subgroup $N\unlhd G$. Therefore, the groups $F\cap N$ take uncountably many values as $N$ ranges over all normal subgroups of $G$. This shows that $G$ has uncountably many normal subgroups.

\begin{rk}
Theorem~\ref{theo:sq} shows in particular that every countable group embeds in a quotient of $\Map(\Sigma)$. It is natural to ask whether there exist surfaces $\Sigma$ for which every countable group embdes in $\Map(\Sigma)$, without having to pass to quotients. Aougab, Patel and Vlamis \cite{APV2} have recently announced that such examples exist, but to our knowledge there is no example so far where $\Sigma$ contains nondisplaceable surfaces. In fact, this stronger statement is not true of all surfaces $\Sigma$: Calegari and Chen proved in \cite[Theorem~5.1]{CC} that a countable group embeds in the mapping class group of the plane minus a Cantor set if and only if it is circularly orderable. 

Notice also that Theorem~\ref{theo:sq} also states the SQ-universality of many countable subgroups of $\Map(\Sigma)$, for which this stronger conclusion can never hold as there are uncountably many $2$-generated groups.

We also mention that it is also natural to further ask the following question: which countable groups quasi-isometrically embed in $\Map(\Sigma)$ when this group is CB-generated?   
\end{rk}

\begin{rk}
We can be a bit more precise about the output of our construction. In particular, the group $F$ that arises in the statement of Theorem~\ref{theo:sq} is made of elements that are supported on $K$, and all elements in the normal subgroups $N$ we produce are countable, supported on finite-type subsurfaces.
\end{rk}

We now turn to the proof of Theorem~\ref{theo:sq}. We make the following definition.

\begin{de}\label{de:small-cancellation-subgroup}
Let $\Sigma$ be a connected orientable surface, let $K\subseteq\Sigma$ be a connected nondisplaceable subsurface of finite type, and let $G\subseteq\Map(\Sigma)$ be a subgroup. Let $H\subseteq \Stab_G(K)$ be a subgroup made of elements that are supported on $K$, and let $Q\subseteq\calc_\Sigma(K)$ be a subspace. Denote by $\widehat{G}$ and $\widehat{H}$ the respective images of $\Stab_G(K)$ and $H$ in $\Map(\widehat{K})$, and by $\widehat{Q}$ the image of $Q$ under the natural identification between $\calc_\Sigma(K)$ and $\calc(\widehat{K})$.

Let $A>0$ and $\epsilon>0$. We say that $(H,Q)$ is an \emph{$(A,\epsilon)$-small cancellation pair} if the following conditions hold:
\begin{enumerate}
\item $(\widehat{H},\widehat{Q})$ is a moving pair and satisfies the $(A,\epsilon)$-small cancellation condition with respect to the $\widehat{G}$-action on $\calc(\widehat{K})$, and 
\item the $\widehat{G}$-stabilizer of $\widehat{Q}$ has a lift in $\Stab_G(K)$ containing $H$ and made of elements that are supported on $K$.
\end{enumerate} 
We say that $H$ is an \emph{$(A,\epsilon)$-small cancellation subgroup} if there exists $Q\subseteq\calc_\Sigma(K)$ such that $(H,Q)$ is an $(A,\epsilon)$-small cancellation pair.
\end{de}

Our proof of Theorem~\ref{theo:sq} relies on three essential steps: finding appropriate small cancellation subgroups in $\Stab_G(K)$, applying small cancellation theory to the $\Stab_G(K)$-action on the curve graph $\calc_\Sigma(K)$, and applying small cancellation theory once more to the $G$-action on the quasi-tree of metric spaces constructed in Theorem~\ref{theo:big-bbf}.

\paragraph*{Step 1: Small cancellation on the curve graph of $\widehat{K}$.}

\begin{lemma}\label{lemma:small-cancellation-on-curve-graph}
For every $L>0$, there exist $A=A(L)>0$ and $\epsilon=\epsilon(L)>0$ such that the following holds. Let $\Sigma$ be a connected orientable surface which contains a connected nondisplaceable subsurface $K$ of finite type, and let $G\subseteq\Map(\Sigma)$ be a subgroup. Let $H\subseteq\Stab_G(K)$ be a subgroup made of elements supported on $K$, and let $\langle\langle H\rangle\rangle$ be the normal subgroup of $\Stab_G(K)$ generated by $H$. Assume that there exists a subset $Q\subseteq\calc_\Sigma(K)$ such that $(H,Q)$ is an $(A,\epsilon)$-small cancellation pair. 

Then $(H,Q)$ is a moving pair and satisfies the $(A,\epsilon)$-small cancellation condition with respect to the action of $\Stab_G(K)$ on $\calc_\Sigma(K)$. In particular,
\begin{enumerate}
\item $\langle\langle H\rangle\rangle$ is equal to a free product of conjugates of $H$;
\item denoting by $G_Q$ the setwise stabilizer of $Q$ in $\Stab_G(K)$, the group $H$ is normal in $G_Q$ and the inclusion $G_Q\subseteq\Stab_G(K)$ induces an injective homomorphism $$G_Q/H\hookrightarrow\Stab_G(K)/\langle\langle H\rangle\rangle;$$
\item all nontrivial elements in $\langle\langle H\rangle\rangle$ have translation length at least $L$ on $\calc_\Sigma(K)$.
\end{enumerate}
\end{lemma}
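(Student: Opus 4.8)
The plan is to transfer the $(A,\epsilon)$-small cancellation condition from the $\widehat{G}$-action on $\calc(\widehat{K})$ to the $\Stab_G(K)$-action on $\calc_\Sigma(K)$, and then invoke Theorem~\ref{theo:dgo} to read off conclusions (1)--(3). The essential point is that the natural identification between $\calc_\Sigma(K)$ and $\calc(\widehat{K})$ (from Section~\ref{sec:curve-graph}) is an isometry, so all the metric data used in the small cancellation condition --- the injectivity radius $\inj$, the fellow-traveling constant $\Delta^*$, and the almost convexity of $Q$ --- agree on the two graphs up to the identification. First I would fix $L>0$ and let $A=A(L)>0$ and $\epsilon=\epsilon(L)>0$ be the constants supplied by Theorem~\ref{theo:dgo} applied with $C=L$; these depend only on $L$ (and on the uniform hyperbolicity constant of curve graphs of finite-type surfaces, via Masur--Minsky \cite{MM}), which is exactly what makes the statement uniform over all $\Sigma$ and $K$.

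Next I would check that $(H,Q)$ is a moving pair for the $\Stab_G(K)$-action. Condition (2) of Definition~\ref{de:small-cancellation-subgroup} provides a lift in $\Stab_G(K)$ of the $\widehat{G}$-stabilizer of $\widehat{Q}$, consisting of elements supported on $K$ and containing $H$; this lift is precisely $G_Q$, the setwise stabilizer of $Q$ in $\Stab_G(K)$, and $H$ is normal in it because $\widehat{H}$ is normal in the $\widehat{G}$-stabilizer of $\widehat{Q}$ and the restriction homomorphism $\Stab_G(K)\to\Map(\widehat{K})$ (Lemma~\ref{lemma:ses-mcg}) carries normality across. Almost convexity of $Q$ in $\calc_\Sigma(K)$ follows from almost convexity of $\widehat{Q}$ in $\calc(\widehat{K})$, again because the identification is an isometry. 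Then I would verify that the injectivity radius and fellow-traveling constant are preserved: for $\inj$, every $h\in H\setminus\{1\}$ is supported on $K$, so its image $\widehat{h}$ in $\Map(\widehat{K})$ is nontrivial (as $h$ acts on $\calc_\Sigma(K)$ exactly as $\widehat{h}$ acts on $\calc(\widehat{K})$), and hence $\inj_{\calc_\Sigma(K)}(H,Q)=\inj_{\calc(\widehat{K})}(\widehat{H},\widehat{Q})\ge A$. For $\Delta^*$, one must compare overlaps $\Delta(Q,tQ)$ over $t\in\Stab_G(K)\setminus G_Q$ with overlaps $\Delta(\widehat{Q},\widehat{t}\widehat{Q})$ over $\widehat{t}$ outside the $\widehat{G}$-stabilizer of $\widehat{Q}$; since $t\notin G_Q$ forces $\widehat{t}$ out of that stabilizer, the supremum over the upstairs index set is bounded by the supremum downstairs, giving $\Delta^*_{\calc_\Sigma(K)}(H,Q)\le\Delta^*_{\calc(\widehat{K})}(\widehat{H},\widehat{Q})\le\epsilon\cdot\inj$.

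With the $(A,\epsilon)$-small cancellation condition established for the $\Stab_G(K)$-action on $\calc_\Sigma(K)$, conclusions (1) and (3) are immediate from parts (1) and (3) of Theorem~\ref{theo:dgo}, and conclusion (2) follows from part (2) together with the identification $G_Q=\Stab_{\Stab_G(K)}(Q)$ just described. The main obstacle I anticipate is the bookkeeping around the two-level quotient structure in condition~(2) of Definition~\ref{de:small-cancellation-subgroup}: one must be careful that the lift of the $\widehat{G}$-stabilizer really coincides with the downstairs stabilizer $G_Q$ rather than merely mapping into it, and that the kernel $\Fix_{\Map(\Sigma)}(K)$ of the restriction map (Lemmas~\ref{lemma:ses-mcg} and~\ref{lemma:fix-stab-curves}) does not interfere --- elements of this kernel act trivially on $\calc_\Sigma(K)$, so they stabilize $Q$ pointwise and thus lie in $G_Q$, but they must be reconciled with the condition that the lift consists of elements supported on $K$. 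Handling this interplay between the restriction homomorphism, its kernel, and the stabilizers at both levels is the delicate part; the purely metric transfer of the small cancellation inequalities is routine once the isometric identification is in hand.
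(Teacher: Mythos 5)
Your proposal follows the same overall route as the paper's proof --- transfer the small cancellation data through the isometric identification between $\calc_\Sigma(K)$ and $\calc(\widehat{K})$, then quote Theorem~\ref{theo:dgo} --- but the treatment of the kernel $\Fix_G(K)$ contains a genuine gap, and it is exactly the point you flag as ``delicate'' at the end without resolving. Your claim that the lift $\widetilde{\Stab}(\widehat{Q})$ furnished by condition~(2) of Definition~\ref{de:small-cancellation-subgroup} ``is precisely $G_Q$'' is false whenever $\Fix_G(K)\neq\{1\}$: every element of $\Fix_G(K)$ acts trivially on $\calc_\Sigma(K)$, hence lies in $G_Q$, whereas the lift meets $\Fix_G(K)$ trivially (it maps isomorphically onto $\Stab_{\widehat{G}}(\widehat{Q})$); for instance, Dehn twists about the boundary curves of $K$ lie in $G_Q$ but in no such lift. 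Consequently normality of $H$ in $G_Q$ does not follow by ``carrying normality across'' the restriction homomorphism: the preimage of $\widehat{H}$ in $G_Q$ is $H\cdot\Fix_G(K)$, not $H$, and for $z\in\Fix_G(K)$ one only gets $zhz^{-1}\in h\,\Fix_G(K)$ a priori. Your justification of the injectivity-radius equality has the same flaw: being supported on $K$ does \emph{not} imply having nontrivial image in $\Map(\widehat{K})$ (boundary twists are supported on $K$ and die in $\Map(\widehat{K})$); what is actually needed is $H\cap\Fix_G(K)=\{1\}$, which holds because $H$ is contained in the lift.

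The missing ingredient, which is how the paper closes both gaps at once, is a commutation fact: an element supported on $K$ commutes with every element of $\Fix_G(K)$, since they admit representatives with essentially disjoint supports (one is the identity off $K$, the other the identity on $K$). Hence the split extension $1\to\Fix_G(K)\to G_Q\to\Stab_{\widehat{G}}(\widehat{Q})\to 1$ upgrades to a direct product decomposition $G_Q\cong\widetilde{\Stab}(\widehat{Q})\times\Fix_G(K)$. Normality of $H$ in $G_Q$ then follows from normality of $H$ in the factor $\widetilde{\Stab}(\widehat{Q})$ (which the isomorphism with $\Stab_{\widehat{G}}(\widehat{Q})$ genuinely does give), because the other factor centralizes $H$; and $H\cap\Fix_G(K)=\{1\}$ gives the injectivity-radius comparison. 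This is also precisely why condition~(2) of Definition~\ref{de:small-cancellation-subgroup} insists that the lift consist of elements \emph{supported on $K$}: without that hypothesis the commutation, and with it the whole argument, breaks down. Your computation of the fellow-traveling constant (using that $t\notin G_Q$ forces $\hat{t}\notin\Stab_{\widehat{G}}(\widehat{Q})$) and the final application of Theorem~\ref{theo:dgo} with $C=L$ agree with the paper and are correct.
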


\begin{proof}
We focus on proving that $(H,Q)$ is a moving pair and satisfies the $(A,\epsilon)$-small cancellation condition with respect to the action of $\Stab_G(K)$ on $\calc_\Sigma(K)$; the rest of the conclusion then follows from Theorem~\ref{theo:dgo}. As in Definition~\ref{de:small-cancellation-subgroup}, we will denote by $\widehat{G}$ and $\widehat{H}$ the respective images of $\Stab_G(K)$ and $H$ in $\Map(\widehat{K})$, and by $\widehat{Q}$ the image of $Q$ under the natural identification between $\calc_\Sigma(K)$ and $\calc(\widehat{K})$.

We first prove that $(H,Q)$ is a moving pair for the $\Stab_G(K)$-action on $\calc_\Sigma(K)$. By the first assumption from Definition~\ref{de:small-cancellation-subgroup}, the pair $(\widehat{H},\widehat{Q})$ is a moving pair for the $\widehat{G}$-action on $\calc(\widehat{K})$, so $\widehat{Q}$ (and hence $Q$) is almost convex. Therefore, we only need to check that $H$ is normal in $G_Q$. By the second assumption of Definition~\ref{de:small-cancellation-subgroup}, the $\widehat{G}$-stabilizer of $\widehat{Q}$ has a lift $\widetilde{\Stab}(\widehat{Q})$ in $G_Q$ which contains $H$ and is made of elements that are supported on $K$. This implies that the extension $$1\to\Fix_G(K)\to G_Q\to\Stab_{\widehat{G}}(\widehat{Q})\to 1$$ is split and every element of $\widetilde{\Stab}(\widehat{Q})$ commutes with every element of $\Fix_G(K)$. Therefore $G_Q$ splits as a direct product isomorphic to $\widetilde{\Stab}(\widehat{Q})\times \Fix_G(K)$. In addition, as $\widehat{H}$ is normal in the $\widehat{G}$-stabilizer of $\widehat{Q}$, it follows that $H$ is normal in $\widetilde{\Stab}(\widehat{Q})$, whence in $G_Q$. 

We now prove that $(H,Q)$ satisfies the $(A,\epsilon)$-small cancellation condition with respect to the action of $\Stab_G(K)$ on $\calc_\Sigma(K)$. As $H\subseteq\widetilde{\Stab}(\widehat{Q})$, we have $H\cap\Fix_G(K)=\{1\}$, and therefore the injectivity radius of $(H,Q)$ is equal to the injectivity radius of $(\widehat{H},\widehat{Q})$. Let now $t\in\Stab_G(K)\setminus G_Q$. Then the image $\hat{t}$ of $t$ in $\widehat{G}$ does not stabilize $\widehat{Q}$, and $$\Delta_{\calc_{\Sigma}(K)}(Q,tQ)=\Delta_{\calc(\widehat{K})}(\widehat{Q},\hat{t}\widehat{Q}).$$ As $(\widehat{H},\widehat{Q})$ satisfies the $(A,\epsilon)$-small cancellation condition (Assumption~1 from Definition~\ref{de:small-cancellation-subgroup}), the conclusion follows. 
\end{proof}

\paragraph*{Step 2: Construction of small cancellation subgroups.}
This is the crucial place in the argument where we use the fact that $G$ is $K$-generic. 

\begin{lemma}\label{lemma:ppa}
Let $\Sigma$ be a connected orientable surface, let $K\subseteq\Sigma$ be a connected nondisplaceable subsurface of finite type, and let $G\subseteq\Map(\Sigma)$ be a $K$-generic subgroup. Let $g\in G$ be an element supported on $K$ which induces a symmetryless pseudo-Anosov mapping class of $\widehat{K}$. 

Then for every $A>0$ and $\epsilon>0$, there exists $n\in\mathbb{N}$ such that the cyclic group $\langle g^n\rangle$ is an $(A,\epsilon)$-small cancellation subgroup. 
\end{lemma}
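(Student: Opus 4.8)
The plan is to reduce the statement to the example from Section~\ref{sec:dgo} (a WPD loxodromic element with cyclic elementary subgroup) applied to the action of $\widehat{G}$ on $\calc(\widehat{K})$, and then to verify the two bookkeeping conditions of Definition~\ref{de:small-cancellation-subgroup}. First I would let $\widehat{g}\in\Map(\widehat{K})$ be the image of $g$; by hypothesis it is a symmetryless pseudo-Anosov, so by Bestvina--Fujiwara \cite[Proposition~11]{BF} it acts loxodromically on $\calc(\widehat{K})$ with the WPD property for the full mapping class group action, hence also for the $\widehat{G}$-action. Symmetrylessness means $\mathcal{E}_{\Map(\widehat{K})}(\widehat{g})=\langle\widehat{g}\rangle$, so a fortiori $\mathcal{E}_{\widehat{G}}(\widehat{g})=\langle\widehat{g}\rangle\cong\mathbb{Z}$. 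Using the WPD property (as in the worked example following Theorem~\ref{theo:dgo}, or via \cite[Lemma~6.18]{DGO}), for the given $A,\epsilon$ there is $n\in\mathbb{N}$ and an almost convex $\widehat{Q}\subseteq\calc(\widehat{K})$ --- one may take a quasi-axis of $\widehat{g}$, suitably thickened --- with $\Stab_{\widehat{G}}(\widehat{Q})=\mathcal{E}_{\widehat{G}}(\widehat{g})=\langle\widehat{g}\rangle$ and such that the moving pair $(\langle\widehat{g}^{\,n}\rangle,\widehat{Q})$ satisfies the $(A,\epsilon)$-small cancellation condition for the $\widehat{G}$-action on $\calc(\widehat{K})$. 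This establishes condition (1) of Definition~\ref{de:small-cancellation-subgroup} with $H=\langle g^n\rangle$ and $Q$ the preimage of $\widehat{Q}$ under the identification $\calc_\Sigma(K)\cong\calc(\widehat{K})$.

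Next I would check condition (2): that the $\widehat{G}$-stabilizer $\Stab_{\widehat{G}}(\widehat{Q})=\langle\widehat{g}\rangle$ lifts to a subgroup of $\Stab_G(K)$ that contains $H=\langle g^n\rangle$ and consists of elements supported on $K$. But this is immediate: the lift is simply $\langle g\rangle$, since $g$ is supported on $K$ by hypothesis, $g$ maps to $\widehat{g}$, and $\langle g\rangle$ obviously contains $\langle g^n\rangle$. Thus $(H,Q)=(\langle g^n\rangle,Q)$ is an $(A,\epsilon)$-small cancellation pair, so $\langle g^n\rangle$ is an $(A,\epsilon)$-small cancellation subgroup, which is exactly the desired conclusion.

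The only genuinely nontrivial input is producing the pair $(\langle\widehat g^{\,n}\rangle,\widehat Q)$ with the stabilizer equal to $\langle\widehat g\rangle$ rather than something larger; this is where I expect the main obstacle, and it is handled by the combination of WPD-ness (which controls the coarse stabilizer of the axis) and symmetrylessness (which forces that coarse stabilizer to be exactly $\langle\widehat g\rangle$). In more detail: WPD of a loxodromic $\widehat g$ implies $\mathcal{E}_{\widehat G}(\widehat g)$ is the setwise stabilizer of the pair of endpoints $(\widehat g^{-\infty},\widehat g^{+\infty})$, and one can choose $\widehat Q$ (a thickened quasi-axis) so that its setwise stabilizer coincides with this pair-stabilizer; then $\mathcal{E}_{\widehat G}(\widehat g)=\langle\widehat g\rangle$ gives $\Stab_{\widehat G}(\widehat Q)=\langle\widehat g\rangle$. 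Once $\Stab_{\widehat G}(\widehat Q)=\langle\widehat g\rangle$ is cyclic and $\widehat g$ is WPD, the standard argument (the example after Theorem~\ref{theo:dgo}, or \cite[Section~6.2]{DGO}, or \cite[Proposition~2.15]{Gui}) shows that taking $n$ large makes $\inj_{\calc(\widehat K)}(\langle\widehat g^{\,n}\rangle,\widehat Q)\approx n\cdot\|\widehat g\|$ arbitrarily large while the fellow-traveling constant $\Delta^*_{\calc(\widehat K)}(\langle\widehat g^{\,n}\rangle,\widehat Q)$ stays bounded (independently of $n$) by WPD-ness; hence $\inj\ge A$ and $\Delta^*\le\epsilon\cdot\inj$ for $n$ large enough, as required.
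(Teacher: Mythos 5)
Your proposal is correct and follows essentially the same route as the paper: pass to $\widehat{g}\in\Map(\widehat{K})$, invoke Bestvina--Fujiwara's WPD property for pseudo-Anosovs on $\calc(\widehat{K})$ together with symmetrylessness to produce, for large $n$, an almost convex $\widehat{Q}$ (a thickened quasi-axis) with stabilizer $\langle\widehat{g}\rangle$ so that $(\langle\widehat{g}^{\,n}\rangle,\widehat{Q})$ satisfies the $(A,\epsilon)$-small cancellation condition, and then observe that $\langle g\rangle$ is the required lift supported on $K$. The paper packages the quasi-axis construction into a single citation of \cite[Proposition~2.8]{Gui}, whereas you spell out the WPD/elementary-subgroup mechanics, but the content is identical.
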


\begin{proof}
Let $\widehat{g}\in \widehat{G}$ be the image of $g$. By \cite[Proposition~11]{BF}, the pseudo-Anosov element $\widehat{g}$ satisfies the WPD property for the action of $\widehat{G}$ on $\calc(\widehat{K})$. As $\widehat{g}$ is symmetryless, we deduce that there exist $n\in\mathbb{N}$ and a $\widehat{g}$-invariant almost convex subset $Q\subseteq\calc(\widehat{K})$ whose stabilizer in $\Map(\widehat{K})$ is equal to $\langle \widehat{g}\rangle$, such that $(\langle \widehat{g}^n\rangle ,Q)$ is a moving pair for the $\widehat{G}$-action on $\calc(\widehat{K})$ which satisfies the $(A,\epsilon)$-small cancellation condition (see e.g.\ \cite[Proposition~2.8]{Gui}). As $\langle g\rangle$ is a lift of $\langle\widehat{g}\rangle$ to $\Stab_G(K)$, the second assumption from Definition~\ref{de:small-cancellation-subgroup} is also satisfied. The lemma follows.
\end{proof}

\begin{lemma}\label{lemma:free-scc}
Let $\Sigma$ be a connected orientable surface, let $K\subseteq\Sigma$ be a connected nondisplaceable subsurface of finite type, and let $G\subseteq\Map(\Sigma)$ be a $K$-generic subgroup.

Then for every $A>0$ and every $\epsilon>0$, there exist a rank two nonabelian free subgroup $F\subseteq\Stab_G(K)$ made of elements supported on $K$, and a subspace $Q\subseteq\calc_\Sigma(K)$ stabilized by $F$, such that for every normal subgroup $N\unlhd F$, the pair $(N,Q)$ is an $(A,\epsilon)$-small cancellation pair.  
\end{lemma}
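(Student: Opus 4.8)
The plan is to transfer the entire construction into the mapping class group $\Map(\widehat{K})$, where the small cancellation machinery of Section~\ref{sec:dgo} is available through Proposition~\ref{prop:small-cancellation-free-subgroup}, and then to lift the resulting data back into $\Stab_G(K)$. Throughout I write $\widehat{G}$ for the image of $\Stab_G(K)$ in $\Map(\widehat{K})$ and work with the $\widehat{G}$-action on $\calc(\widehat{K})$, which is hyperbolic by Masur--Minsky \cite{MM}. The guiding observation, and the reason the conclusion can hold for \emph{every} normal subgroup $N\unlhd F$ simultaneously, is that the fellow-traveling constant $\Delta^*_X(\widehat{N},\widehat{Q})$ depends only on $\widehat{Q}$ and not on the subgroup $\widehat{N}$, whereas the injectivity radius $\inj_X(\widehat{N},\widehat{Q})$ can only increase as $\widehat{N}$ shrinks; hence once the small cancellation condition is arranged for $\widehat{F}$ itself, it is inherited automatically by all subgroups of $\widehat{F}$.

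First I would set up Proposition~\ref{prop:small-cancellation-free-subgroup}. Let $\widehat{H}\subseteq\widehat{G}$ be the image of the subgroup of $\Stab_G(K)$ consisting of elements supported on $K$. Since $G$ is $K$-generic, it contains an element $g$ supported on $K$ whose image $\widehat{g}$ is a symmetryless pseudo-Anosov; thus $\widehat{g}$ is loxodromic on $\calc(\widehat{K})$, is WPD by \cite[Proposition~11]{BF}, and satisfies $\mathcal{E}_{\widehat{G}}(\widehat{g})=\langle\widehat{g}\rangle\cong\mathbb{Z}$. To see that $\widehat{H}$ is non-cyclic I would use the $K$-nonelementarity of $G$ to choose $h\in\Stab_G(K)$ inducing a pseudo-Anosov independent of $\widehat{g}$; then $g$ and $hgh^{-1}$ are both supported on $K$ (as $h$ preserves the isotopy class of $K$) and their images are two independent pseudo-Anosovs, so $\widehat{H}$ contains two independent loxodromics, which also shows the $\widehat{G}$-action on $\calc(\widehat{K})$ is nonelementary. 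Applying Proposition~\ref{prop:small-cancellation-free-subgroup} with $H=\widehat{H}$ and the element $\widehat{g}$ then yields, for the prescribed $A$ and $\epsilon$, a rank two free subgroup $\widehat{F}\subseteq\widehat{H}$ and an almost convex subset $\widehat{Q}\subseteq\calc(\widehat{K})$ with $\Stab_{\widehat{G}}(\widehat{Q})=\widehat{F}$ such that $(\widehat{F},\widehat{Q})$ satisfies the $(A,\epsilon)$-small cancellation condition.

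Next I would lift this data to $\Sigma$. Since $\widehat{F}\subseteq\widehat{H}$, each free generator of $\widehat{F}$ is the image of an element of $\Stab_G(K)$ supported on $K$; as $\widehat{F}$ is free, the restriction homomorphism splits over $\widehat{F}$, producing a rank two free subgroup $F\subseteq\Stab_G(K)$, made of elements supported on $K$, that maps isomorphically onto $\widehat{F}$. Transporting $\widehat{Q}$ through the natural identification of $\calc(\widehat{K})$ with $\calc_\Sigma(K)$ gives the desired subspace $Q\subseteq\calc_\Sigma(K)$, which is stabilized by $F$ because $\widehat{F}=\Stab_{\widehat{G}}(\widehat{Q})$.

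Finally I would verify the two conditions of Definition~\ref{de:small-cancellation-subgroup} for an arbitrary normal subgroup $N\unlhd F$. Writing $\widehat{N}$ for the image of $N$, normality gives $\widehat{N}\unlhd\widehat{F}=\Stab_{\widehat{G}}(\widehat{Q})$, so $(\widehat{N},\widehat{Q})$ is a moving pair (almost convexity of $\widehat{Q}$ is supplied by the Proposition). For the metric condition, $\Delta^*_X(\widehat{N},\widehat{Q})=\Delta^*_X(\widehat{F},\widehat{Q})$, since this quantity is defined using $\Stab_{\widehat{G}}(\widehat{Q})$ alone, while $\inj_X(\widehat{N},\widehat{Q})\ge\inj_X(\widehat{F},\widehat{Q})\ge A$ because the defining infimum runs over the smaller set $\widehat{N}\setminus\{1\}$; combining these gives $\Delta^*_X(\widehat{N},\widehat{Q})\le\epsilon\,\inj_X(\widehat{F},\widehat{Q})\le\epsilon\,\inj_X(\widehat{N},\widehat{Q})$, which is the first requirement. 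The second requirement holds because $F$ itself is a lift of $\Stab_{\widehat{G}}(\widehat{Q})$ that contains $N$ and is made of elements supported on $K$. I expect the only genuinely delicate points to be the lifting step, where one must keep the generators supported on $K$ while preserving freeness, and the recognition that the monotonicity of $\inj_X$ together with the $N$-independence of $\Delta^*_X$ is exactly what promotes the single small cancellation statement for $\widehat{F}$ to the uniform statement over all normal subgroups $N$; the remaining verifications are routine.
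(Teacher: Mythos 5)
Your proposal is correct, and its skeleton coincides with the paper's: apply Proposition~\ref{prop:small-cancellation-free-subgroup} to the $\widehat{G}$-action on $\calc(\widehat{K})$, transport the resulting pair $(\widehat{F},\widehat{Q})$ back to $\calc_\Sigma(K)$, and observe that the small cancellation condition is inherited by every normal subgroup $N\unlhd F$ because the fellow-traveling constant $\Delta^*$ depends only on $\widehat{Q}$ while the injectivity radius can only increase when passing to subgroups --- the paper asserts this last inheritance without proof, and your monotonicity argument is exactly the justification it omits. Where you genuinely diverge is in the subgroup fed into the Proposition and in the lifting step. The paper first runs an extra round of small cancellation (Lemmas~\ref{lemma:ppa} and~\ref{lemma:small-cancellation-on-curve-graph}) to build $H=\langle\langle g^n\rangle\rangle\unlhd\Stab_G(K)$, a nonabelian free subgroup supported on $K$ all of whose nontrivial elements act with large translation length on $\calc_\Sigma(K)$; the payoff of this detour is that the restriction homomorphism is injective on $H$ (no nontrivial element of $H$ lies in $\Fix_G(K)$), so $\widehat{F}\subseteq\widehat{H}$ lifts canonically to $F\subseteq H$. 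You instead take $\widehat{H}$ to be the image of the full subgroup of elements supported on $K$, obtain non-cyclicity by conjugating $\widehat{g}$ by an independent pseudo-Anosov, and build the lift by choosing preimages of the two free generators and splitting over the free group $\widehat{F}$. Your route is more economical, avoiding one application of the Dahmani--Guirardel--Osin machinery, and it suffices because Definition~\ref{de:small-cancellation-subgroup} only asks for \emph{some} lift of $\Stab_{\widehat{G}}(\widehat{Q})$ containing $N$ and supported on $K$, which your section provides. Two small points you should make explicit: first, the claim that $\widehat{g}$ and $\widehat{h}\widehat{g}\widehat{h}^{-1}$ are independent (and, before that, that one of the two independent pseudo-Anosovs furnished by $K$-nonelementarity is independent of $\widehat{g}$) relies on the standard dichotomy that two pseudo-Anosov mapping classes have either equal or disjoint fixed sets in $\PML(\widehat{K})$, which deserves a citation; second, $\mathcal{E}_{\widehat{G}}(\widehat{g})=\langle\widehat{g}\rangle$ follows from symmetrylessness in $\Map(\widehat{K})$ by intersecting with $\widehat{G}$, as you implicitly use.
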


\begin{proof}
Let $g\in G$ be an element supported on $K$ which induces a symmetryless pseudo-Anosov mapping class of $\widehat{K}$ -- this exists as $G$ is $K$-generic. Let $L>0$, and let $A=A(L)>0$ and $\epsilon=\epsilon(L)>0$ be the constants provided by Lemma~\ref{lemma:small-cancellation-on-curve-graph}. By Lemma~\ref{lemma:ppa}, there exists $n\in\mathbb{N}$ such that $\langle g^n\rangle$ is an $(A,\epsilon)$-small cancellation subgroup. Applying Lemma~\ref{lemma:small-cancellation-on-curve-graph}, the normal subgroup $\langle\langle g^n\rangle\rangle$ of $\Stab_G(K)$ generated by $g^n$ is free, and every nontrivial element of $\langle\langle g^n\rangle\rangle$ induces a pseudo-Anosov mapping class of $\widehat{K}$ (because it acts on $\calc_\Sigma(K)$ with translation length at least $L$ by the third conclusion of Lemma~\ref{lemma:small-cancellation-on-curve-graph}). In addition every element of $\langle\langle g^n\rangle\rangle$ is supported on $K$, and $\langle\langle g^n\rangle\rangle$ is not cyclic because $G$ is $K$-nonelementary. Let $H=\langle\langle g^n\rangle\rangle$, and let $\widehat{H}$ be its image in $\Map(\widehat{K})$. Notice that the natural homomorphism $\Stab_G(K)\to\Map(\widehat{K})$ restricts to a bijection $H\to\widehat{H}$.

By \cite[Proposition~11]{BF}, every pseudo-Anosov element of $\Map(\widehat{K})$ has the WPD property for the $\Map(\widehat{K})$-action on $\calc(\widehat{K})$. As $\mathcal{E}_{\widehat{G}}(\widehat{g})$ is cyclic, we can apply Proposition~\ref{prop:small-cancellation-free-subgroup} and get a rank $2$ free subgroup $\widehat{F}\subseteq\widehat{H}$ and an almost convex subspace $\widehat{Q}\subseteq\calc(\widehat{K})$ such that $\widehat{F}=\Stab_{\widehat{G}}(\widehat{Q})$ and the moving pair $(\widehat{F},\widehat{Q})$ satisfies the $(A,\epsilon)$-small cancellation condition with respect to the $\widehat{G}$-action on $\calc(\widehat{K})$.

Let $F\subseteq H$ be the preimage of $\widehat{F}$ under the natural bijection $H\to\widehat{H}$, so $F$ is a rank two nonabelian free subgroup of $\Stab_G(K)$ made of elements supported on $K$. Let $Q\subseteq\calc_\Sigma(K)$ be the image of $\widehat{Q}$ under the natural identification between $\calc(\widehat{K})$ and $\calc_\Sigma(K)$. Then $Q$ is stabilized by $F$, and for every normal subgroup $N\unlhd F$, the pair $(N,Q)$ is an $(A,\epsilon)$-small cancellation pair, as required.
\end{proof}

\paragraph*{Step 3: Small cancellation on the quasi-tree of metric spaces.}

\begin{lemma}\label{lemma:small-cancellation-on-bbf}
Let $\Sigma$ be a connected orientable surface, let $K\subseteq\Sigma$ be a nondisplaceable subsurface of finite type, and let $G\subseteq\Map(\Sigma)$. There exists $L>0$ such that for every normal subgroup $N\unlhd\Stab_G(K)$, if every element of $N$ acts on $\calc(\widehat{K})$ with translation length at least $L$, then 
\begin{enumerate}
\item the normal subgroup $\langle\langle N\rangle\rangle$ of $G$ generated by $N$ is a free product of conjugates of $N$, and
\item the inclusion $\Stab_G(K)\subseteq G$ induces an injective homomorphism $$\Stab_G(K)/N\hookrightarrow G/\langle\langle N\rangle\rangle.$$ 
\end{enumerate}
\end{lemma}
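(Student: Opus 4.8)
The plan is to apply the Dahmani--Guirardel--Osin geometric small cancellation theorem (Theorem~\ref{theo:dgo}) to the $G$-action on the quasi-tree of metric spaces $\mathbb{X}=\calc(\mathbb{Y}_K)$ constructed in Theorem~\ref{theo:big-bbf}, using as moving pair the group $N$ together with the convex piece $Y_K=\calc_\Sigma(K)\subseteq\mathbb{X}$. First I would recall from Theorem~\ref{theo:bbf}(1) that $Y_K$ embeds as a geodesically convex subspace of $\mathbb{X}$, hence in particular is almost convex, and from Lemma~\ref{lemma:fix-stab-curves} that $\Stab_G(Y_K)=\Stab_G(K)$; since $N$ is assumed normal in $\Stab_G(K)$, the pair $(N,Y_K)$ is a moving pair for the $G$-action on $\mathbb{X}$. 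Applying Theorem~\ref{theo:dgo} with any constant $C\ge 0$ produces thresholds $A=A(C)$ and $\epsilon=\epsilon(C)$; conclusions (1) and (2) of the lemma are then exactly conclusions (1) and (2) of Theorem~\ref{theo:dgo}, so the whole content of the lemma is to produce a single $L>0$, depending only on $\Sigma$, $K$ and $G$, such that the hypothesis ``every element of $N$ acts on $\calc(\widehat{K})$ with translation length at least $L$'' forces the $(A,\epsilon)$-small cancellation condition for $(N,Y_K)$ in $\mathbb{X}$.

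The key step is therefore to bound the two quantities entering the small cancellation condition: the injectivity radius $\inj_\mathbb{X}(N,Y_K)$ from below in terms of $L$, and the fellow-traveling constant $\Delta^*_\mathbb{X}(N,Y_K)$ from above by a universal constant. For the fellow-traveling constant: for $t\in G\setminus\Stab_G(Y_K)$, the translate $tY_K$ is a distinct piece $\calc_\Sigma(tK)$ of the BBF family, and Theorem~\ref{theo:bbf}(2) gives a uniform bound $D$ on $\Delta_\mathbb{X}(Y_K,tY_K)$ — this is precisely the ``distance between distinct pieces is uniformly bounded'' feature of the BBF construction, which follows from axiom (P0); so $\Delta^*_\mathbb{X}(N,Y_K)\le D$ is a constant independent of $N$. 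For the injectivity radius: an element $h\in N\setminus\{1\}$ lies in $\Stab_G(K)$ and acts on the convex subspace $Y_K=\calc_\Sigma(K)$; via the identification of $\Stab_G(K)/\Fix_G(K)$ with a subgroup of $\Map(\widehat{K})$ (Lemmas~\ref{lemma:ses-mcg} and~\ref{lemma:fix-stab-curves}) and the identification $\calc_\Sigma(K)\cong\calc(\widehat{K})$, the displacement of points of $Y_K$ under $h$ is controlled below by the displacement of the image $\widehat h$ on $\calc(\widehat{K})$ — here one must be slightly careful, since an element of $N$ might be a product of peripheral twists and thus act as the identity on $Y_K$, but the hypothesis that $\widehat h$ has translation length $\ge L$ on $\calc(\widehat{K})$ rules this out and moreover forces $\inf_{x\in Y_K}d_\mathbb{X}(x,hx)\ge L$ since $Y_K$ is isometrically the curve graph and is convex in $\mathbb{X}$. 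Hence $\inj_\mathbb{X}(N,Y_K)\ge L$.

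Putting these together: given the BBF output bound $D$ and the DGO thresholds $A(C),\epsilon(C)$ for a fixed choice of $C$ (say $C=1$, since the conclusions of the lemma do not mention translation lengths in $\mathbb{X}$), it suffices to take $L:=\max\{A(C),\,D/\epsilon(C)\}$. Then the hypothesis gives $\inj_\mathbb{X}(N,Y_K)\ge L\ge A(C)$ and $\Delta^*_\mathbb{X}(N,Y_K)\le D\le \epsilon(C)\cdot L\le\epsilon(C)\cdot\inj_\mathbb{X}(N,Y_K)$, so $(N,Y_K)$ satisfies the $(A(C),\epsilon(C))$-small cancellation condition, and Theorem~\ref{theo:dgo} yields conclusions (1) and (2). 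The main obstacle — really the only subtle point — is the lower bound on the injectivity radius: one needs that the $\Stab_G(K)$-action on the convex piece $Y_K\subseteq\mathbb{X}$ ``is'' the $\widehat G$-action on $\calc(\widehat{K})$ up to the harmless kernel $\Fix_G(K)$, and that no nontrivial element of $N$ can act with bounded displacement on $Y_K$; the translation-length hypothesis on $\calc(\widehat{K})$ is exactly what is engineered (via Lemma~\ref{lemma:small-cancellation-on-curve-graph}) to guarantee this, which is why $L$ is stated as a hypothesis on the $\calc(\widehat{K})$-action rather than on $\mathbb{X}$ directly.
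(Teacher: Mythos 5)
Your proposal follows essentially the same route as the paper's proof: apply the Dahmani--Guirardel--Osin theorem to the $G$-action on the quasi-tree $\mathbb{X}=\calc(\mathbb{Y}_K)$ with moving pair $(N,\calc_\Sigma(K))$, bound the fellow-traveling constant by the uniform BBF overlap bound $D$ from Theorem~\ref{theo:bbf}(2), identify $\Stab_G(\calc_\Sigma(K))$ with $\Stab_G(K)$ via Lemma~\ref{lemma:fix-stab-curves}, and convert the translation-length hypothesis on $\calc(\widehat{K})$ into a lower bound on the injectivity radius using the geodesic convexity of the piece, taking $L=\max\{A,D/\epsilon\}$. The paper's write-up differs only in inessential details (it fixes $C=0$ and phrases the choice of $L$ as an implication between translation lengths rather than an equality of displacements), so your argument is correct and matches the intended one.
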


\begin{proof}
Let $\mathbb{X}$ be the quasi-tree of metric spaces associated to the BBF family $\mathbb{Y}_K$ provided by Proposition~\ref{prop:bbf-family}. For every $K'\in\Map(\Sigma)\cdot K$, the space $\calc_\Sigma(K')$ embeds as a geodesically convex subspace in $\mathbb{X}$ (by the first conclusion of Theorem~\ref{theo:bbf}). Let $D>0$ be a constant (provided by the second conclusion of Theorem~\ref{theo:bbf}) such that for any two distinct $K_1,K_2\in\Map(\Sigma)\cdot K$, one has $\Delta_\mathbb{X}(\calc_\Sigma(K_1),\calc_\Sigma(K_2))<D$. Let $A>0$ and $\epsilon>0$ be constants given by the small cancellation theorem (Theorem~\ref{theo:dgo}), applied to the $G$-action on $\mathbb{X}$ with $C=0$.

As recalled above, the first assertion of Theorem~\ref{theo:bbf} ensures that $\calc_\Sigma(K)$ is geodesically convex in $\mathbb{X}$. It is also $\Stab_G(K)$-invariant, so we can (and shall) choose $L>0$ such that for every $g\in\Stab_G(K)$, if  $||g||_{\calc_\Sigma(K)}>L$, then $||g||_{\mathbb{X}}>\max\{A,D/\epsilon\}$.

We will now prove that $(N,\calc_\Sigma(K))$ satisfies the $(A,\epsilon)$-small cancellation condition with respect to the $G$-action on $\mathbb{X}$; the lemma will then follow from the geometric small cancellation theorem (Theorem~\ref{theo:dgo}). First, the group $N$ is normal in $\Stab_G(K)$, which is equal to the $G$-stabilizer of $\calc_\Sigma(K)$ in $\mathbb{X}$ (Lemma~\ref{lemma:fix-stab-curves}). Second, the injectivity radius of $N$ is at least equal to $\max\{A,D/\epsilon\}$. Finally, for every $t\in G\setminus\Stab_G(K)$, we have $$\Delta_{\mathbb{X}}(\calc_\Sigma(K),t\calc_\Sigma(K))< D<\epsilon~\inj_{\mathbb{X}}(N,\calc_\Sigma(K)).$$ The lemma follows.  
\end{proof}

\paragraph*{Conclusion.}

\begin{proof}[Proof of Theorem~\ref{theo:sq}]
In the whole proof, we let $L>0$ be the constant provided by Lemma~\ref{lemma:small-cancellation-on-bbf}, and we let $A=A(L)>0$ and $\epsilon=\epsilon(L)>0$ be the constants provided by Lemma~\ref{lemma:small-cancellation-on-curve-graph}.

We start by proving the existence of a normal nonabelian free subgroup of $G$. By Lemma~\ref{lemma:ppa}, there exists an element $k\in\Stab_G(K)$ supported on $K$ such that $\langle k\rangle$ is an $(A,\epsilon)$-small cancellation subgroup. Applying Lemma~\ref{lemma:small-cancellation-on-curve-graph}, we deduce that the normal subgroup $N_0$ of $\Stab_G(K)$ generated by $k$ is free and purely $K$-pseudo-Anosov, in fact all nontrivial elements of $N_0$ are supported on $K$ and act on $\calc_\Sigma(K)$ with translation length at least $L$. It is also nonabelian as $G$ is $K$-nonelementary. It therefore follows from Lemma~\ref{lemma:small-cancellation-on-curve-graph} that the normal subgroup of $G$ generated by $N_0$ is a nonabelian free group. 

We now turn to proving the second conclusion of Theorem~\ref{theo:sq}. By Lemma~\ref{lemma:free-scc}, we can find a rank two nonabelian free subgroup $F\subseteq\Stab_G(K)$ and a subspace $Q\subseteq\calc_\Sigma(K)$ stabilized by $F$ such that for every normal subgroup $N\unlhd F$, the pair $(N,Q)$ an $(A,\epsilon)$-small cancellation pair.

Let $\Gamma$ be a countable group. We aim to embed $\Gamma$ in a quotient $\overline{G}$ of $G$ in such a way that the image of $\Gamma$ in $\overline{G}$ is contained in the image of $F$ under the quotient map $G\twoheadrightarrow \overline{G}$.

As every countable group embeds in a two-generated group -- in other words $F_2$ is SQ-universal \cite[Theorem~10.3]{LS}, we can find a normal subgroup $N\unlhd F$ such that $\Gamma$ embeds in $F/N$. Our choice of $F$ and $Q$ ensures that $(N,Q)$ is an $(A,\epsilon)$-small cancellation pair. Let $N'$ be the normal closure of $N$ in $\Stab_G(K)$. Let $G_Q$ be the setwise stabilizer of $Q$ in $\Stab_G(K)$. Then $F/N$ embeds in $G_Q/N$, and by Lemma~\ref{lemma:small-cancellation-on-curve-graph} this in turns embeds in $\Stab_G(K)/N'$, and all elements of $N'$ act loxodromically on $\calc_\Sigma(K)$ with translation length at least $L$. Denoting by $N''$ the normal subgroup of $G$ generated by $N'$, Lemma~\ref{lemma:small-cancellation-on-bbf} then ensures that $\Stab_G(K)/N'$ embeds in $G/N''$. In conclusion we have proved that
\[
\Gamma \hookrightarrow F/N \hookrightarrow \Stab_G(K)/N' \hookrightarrow G/N'',
\]
concluding our proof.
\end{proof}

\subsection{Normal free subgroups and nondisplaceable finite-type subsurfaces}\label{sec:normal-free-subgroups}

In fact, nontrivial normal free subgroups in the mapping class group turns out to be a characterization of surfaces that contain nondisplaceable subsurfaces of finite type.

\begin{theo}
Let $\Sigma$ be a connected orientable surface of infinite type. The following statements are equivalent.
\begin{enumerate}
\item The surface $\Sigma$ contains a nondisplaceable subsurface of finite type.
\item The group $\Map(\Sigma)$ contains a nontrivial normal free subgroup.
\end{enumerate}
\end{theo}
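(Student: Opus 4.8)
The plan is to prove the two implications separately. The forward direction $(1)\Rightarrow(2)$ is essentially Theorem~\ref{theo:sq}: if $\Sigma$ contains a nondisplaceable subsurface of finite type, then one can find a connected nondisplaceable subsurface $K$ of finite type such that $\Map(\Sigma)$ is $K$-generic (as observed just before Theorem~\ref{theo:sq}, using \cite[Lemma~6.18]{DGO}), and then the first conclusion of Theorem~\ref{theo:sq} produces a normal nonabelian free subgroup of $\Map(\Sigma)$. So the content of this direction is already in hand; I would just cite it.

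For the converse $(2)\Rightarrow(1)$, I would argue the contrapositive, exactly along the lines sketched in the introduction after Theorem~\ref{theointro:free-normal}. Assume $\Sigma$ has \emph{no} nondisplaceable subsurface of finite type; let $N\unlhd\Map(\Sigma)$ be a nontrivial normal subgroup, and show $N$ is not free. First I would fix a nontrivial $g\in N$ and produce a finitely supported homeomorphism $h$ such that the commutator $k:=[g,h]=(ghg^{-1})h^{-1}$ is nontrivial. This requires a small lemma: since $g\neq 1$, there is a simple closed curve $c$ with $g(c)\neq c$; choosing $h$ to be (say) a high power of a Dehn twist about a curve that is moved by $g$ and whose support is disjoint-enough from its $g$-image, one gets $ghg^{-1}h^{-1}\neq 1$ — here one can invoke standard facts about mapping class groups of finite-type subsurfaces, working inside a finite-type subsurface containing both $c$ and $g(c)$. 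The key structural point is that $k = (ghg^{-1})\cdot h^{-1}$ is a product of two finitely supported elements, hence itself finitely supported, and $k\in N$ because $N$ is normal (so $ghg^{-1}\in N$... wait, that needs $h\in N$) — more precisely one writes $k=g(hg^{-1}h^{-1})$, so $k\in N$ since $g\in N$ and $hg^{-1}h^{-1}\in N$ by normality.

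Next, since $\Sigma$ has no nondisplaceable subsurface of finite type, the support $S$ of $k$ (a finite-type subsurface) is displaceable: there is $\eta\in\Homeo(\Sigma)$ with $\eta(S)\cap S=\emptyset$. Then $k$ and $\eta k\eta^{-1}$ are supported on disjoint subsurfaces, hence commute, and neither is trivial; moreover $\eta k\eta^{-1}\in N$ by normality. Finally I would check that $\langle k,\eta k\eta^{-1}\rangle$ is genuinely a noncyclic free abelian group rather than an infinite cyclic group — this follows because $k$ and $\eta k\eta^{-1}$ have disjoint supports, so no nontrivial power relation $k^a=(\eta k\eta^{-1})^b$ can hold with $a,b\neq 0$ (looking at the supports), and a subgroup of a free group cannot contain $\mathbb{Z}^2$. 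Hence $N$ is not free, completing the contrapositive.

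The main obstacle is the first step of the converse: guaranteeing that $k=[g,h]\neq 1$ for a suitable finitely supported $h$. One must be a little careful because $g$ could a priori commute with many Dehn twists; the fix is to use that $g$ moves some curve $c$, pass to a finite-type subsurface $K'$ containing $c\cup g(c)$ that is large enough that $g$ does not preserve its isotopy class in a trivial way, and use the faithfulness of the action of finite-type mapping class groups on curves together with the fact that high powers of a suitably chosen twist fail to commute with $g$. Everything else (displaceability of the support, disjoint supports implying commutation and implying no $\mathbb{Z}^2$ in a free group) is routine.
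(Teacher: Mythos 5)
Your proposal is correct and follows essentially the same route as the paper: cite Theorem~\ref{theo:sq} for $(1)\Rightarrow(2)$, and for the converse take a nontrivial $g$ in a normal subgroup $N$, form a nontrivial finitely supported commutator $k=[g,h]\in N$ with $h$ a Dehn twist, displace its support by some $\eta$, and observe that $\langle k,\eta k\eta^{-1}\rangle$ is a noncyclic abelian subgroup of $N$, which is impossible in a free group. The only point worth flagging is that the existence of a curve $c$ with $g(c)\neq c$ for a nontrivial $g\in\Map(\Sigma)$ requires the Alexander method for \emph{infinite-type} surfaces (the paper cites this as [HHMV]) rather than finite-type facts alone, and once such a $c$ is found no high powers are needed: $gT_cg^{-1}=T_{g(c)}\neq T_c$, so the single twist $T_c$ already fails to commute with $g$.
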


\begin{proof}
The fact that $1\Rightarrow 2$ has been proved in Theorem~\ref{theo:sq}, so we focus on proving that $\neg 1\Rightarrow\neg 2$. Assume that every subsurface of $\Sigma$ of finite type is displaceable, and let $N$ be a normal subgroup of $\Map(\Sigma)$. We will prove that $N$ contains an abelian subgroup of rank $2$. 

Let $g\in N\setminus\{\mathrm{id}\}$. We first claim that there exists a Dehn twist $h\in\Map(\Sigma)$ that does not commute with $g$. Indeed otherwise $g$ would fix the isotopy class of every simple closed curve on $\Sigma$, and it follows that $g=\mathrm{id}$ by \cite{HHMV}. 

Let now $f=[g,h]$, which is nontrivial. Writing $f=(ghg^{-1})h^{-1}$, we see that $f$ is a product of two finitely supported mapping classes, so $f$ is supported on a subsurface $K$ of finite type. In addition, writing $f=g(hg^{-1}h^{-1})$, we see that $f\in N$. By assumption, there exists $\eta\in\Map(\Sigma)$ such that $\eta(K)\cap K\neq\emptyset$. Then the subgroup generated by $f$ and $\eta f\eta^{-1}$ is abelian of rank $2$, concluding our proof. 
\end{proof}

\section{Surfaces with no finite-type nondisplaceable subsurface}\label{sec:displaceable}

Under some light topological conditions on the surface $\Sigma$, we now aim to prove a converse statement to the work from Section~\ref{sec:nondisplaceable}, saying that in the absence of nondispaceable subsurfaces, the mapping class group $\Map(\Sigma)$ does not have any nonelementary continuous action on a hyperbolic space.

\subsection{Statement}\label{sec:statement}

We now present the two conditions we will impose on the surface $\Sigma$. 

\medskip

\noindent\textbf{Tameness of the end space.} The first one is a topological condition giving some control on the topology of the end space $E$ of $\Sigma$. The space $E$ is equipped with the following partial order \cite[Definition~4.1]{MR}: given $x,y\in E$, we let $x\preccurlyeq y$ if for every open neighborhood $U$ of $x$, there exist an open neighborhood $V$ of $y$ in $E$ and $f\in\Map(\Sigma)$ such that $f(V)\subseteq U$ and $f(V\cap E^g)\subseteq U\cap E^g$. We say that an end $x\in E$ is \emph{of maximal type} if it is maximal for the order $\preccurlyeq$.

Let $x\in E$. A neighborhood $U$ of $x$ is \emph{stable} \cite[Definition~4.14]{MR} if for every open neighborhood $U'\subseteq U$ of $x$, there is a homeomorphic copy of $(U,U\cap E^g)$ inside $(U',U'\cap E^g)$. Following \cite[Definition~6.14]{MR}, we say that $\Sigma$ has \emph{tame endspace} if every $x\in E$ which is either of maximal type, or an immediate predecessor of an end of maximal type (for the order $\preccurlyeq$) has a stable neighborhood. We refer to \cite[Section~6.3]{MR} for a thorough discussion of this condition. Also, we mention that even if we restrict to surfaces with tame endspace, we are still considering a large class, in particular there are uncountably many pairwise non-homeomorphic surfaces with tame endspace, see \cite[Remark~5.5]{FGM}. 

\medskip

\noindent \textbf{CB generation of the mapping class group.} The second condition we impose on $\Sigma$ roughly says that the mapping class group $\Map(\Sigma)$ has a well-controlled geometry, from the point of view of geometric group theory of Polish groups as developed by Rosendal in \cite{Ros}. 
Let $G$ be a Polish topological group. A subset $A\subseteq G$ is \emph{coarsely bounded}, abbreviated \emph{CB}, if for every continuous isometric $G$-action on a metric space $X$, the diameter of every $A$-orbit in $X$ is finite. The group $G$ is \emph{CB-generated} if it admits a generating subset which is coarsely bounded. Among surfaces with tame end spaces, surfaces whose mapping class group is CB-generated have been fully characterized by Mann and the third named author in \cite[Theorem~1.6]{MR}.

\medskip

The goal of the present section is to prove the following theorem.

\begin{theo}\label{theo:no-hyperbolic-action}
Let $\Sigma$ be a connected orientable surface with tame end space such that $\Map(\Sigma)$ is CB-generated. Assume that $\Sigma$ does not contain any nondisplaceable subsurface of finite type. 

Then $\Map(\Sigma)$ does not have any continuous nonelementary isometric action on a hyperbolic metric space.
\end{theo}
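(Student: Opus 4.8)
The overall strategy, as sketched for the avenue of chimneys in the introduction, is to show that every continuous isometric action of $\Map(\Sigma)$ on a hyperbolic space $X$ must be elementary, by exhibiting a large normal subgroup $N\unlhd\Map(\Sigma)$ all of whose elements act elliptically, and then controlling the quotient. The plan is to first extract, from the structure theory of Mann and the third named author \cite{MR}, the appropriate notions of special (maximal-type) ends and of \emph{horizontally bounded} mapping classes, i.e.\ those supported on a finite-type subsurface avoiding a neighborhood of each special end. Using the tameness hypothesis, I would argue that any such horizontally bounded mapping class has displaceable support (this is exactly where the hypothesis $\neg 1$---no nondisplaceable finite-type subsurface---enters), so that by the characterization of unbounded length functions in \cite{MR} it acts with bounded orbits in \emph{every} continuous isometric action of $\Map(\Sigma)$; in particular it is elliptic on $X$.

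Next I would identify the normal subgroup structure. Passing to the finite-index subgroup $\Map^0(\Sigma)$ fixing each special end, the plan is to produce finitely many \emph{shift homomorphisms} $\Map^0(\Sigma)\to\mathbb{Z}$ measuring average displacement towards the special ends, and to show that the kernel $K_0$ of the resulting map to $\mathbb{Z}^r$ is contained in the closure of the normal subgroup generated by the horizontally bounded elements. Here continuity of the action is essential: since all horizontally bounded elements act elliptically and they topologically generate a normal subgroup whose closure contains $K_0$, a closure/continuity argument should show that every element of $K_0$ also acts with bounded orbits on $X$. Thus the restriction of the action to $K_0$ (and hence to the closure) is bounded, i.e.\ $K_0$ has bounded orbits.

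Finally I would deduce elementarity. We now have a normal subgroup $K_0\unlhd\Map^0(\Sigma)$ acting with bounded orbits on $X$, with abelian quotient $\Map^0(\Sigma)/K_0$ a subgroup of $\mathbb{Z}^r$. The concluding step is the promised ``exercise in actions on hyperbolic spaces'': an action of a group with a normal subgroup acting with bounded orbits and virtually abelian quotient cannot be nonelementary. Concretely, the bounded orbits of $K_0$ coarsely fix a point (or a bounded set), and normality forces the orbit of this bounded set under the whole group to be permuted coarsely; the abelian-by-finite quotient then cannot generate two independent loxodromics, since a virtually abelian group acting on a hyperbolic space has an elementary action. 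Thus the $\Map(\Sigma)$-action on $X$ is elementary.

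The main obstacle I anticipate is the \emph{passage from horizontally bounded elements to the whole kernel $K_0$}, i.e.\ showing that every element of $K_0$ lies in the closure of the horizontally bounded normal subgroup and that ellipticity survives this closure. This requires both a precise combinatorial understanding of how an arbitrary mapping class in the kernel of the shift homomorphisms is approximated (in the Polish topology on $\Map(\Sigma)$) by horizontally bounded ones, which relies crucially on the tameness of the endspace and on CB-generation via \cite{MR}, and a continuity argument ensuring that elliptic behaviour is preserved under the relevant limits. The finite-type displaceability step and the final hyperbolic-geometry argument are comparatively routine, but the definition of the shift homomorphisms in full generality (as opposed to the single $\mathbb{Z}$ in the chimney example) and the verification that their common kernel is captured by horizontally bounded elements are where the real work lies.
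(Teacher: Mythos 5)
Your overall architecture coincides with the paper's: reduce to an avenue surface, show that horizontally bounded mapping classes act elliptically, build finitely many shift homomorphisms to $\mathbb{Z}^n$, show that their common kernel lies in the closure of the group of horizontally bounded classes, and conclude with the exercise on hyperbolic actions (this is Lemma~\ref{lemma:no-hyperbolic-criterion} in the paper, whose proof, handling the horocyclic case and the limit set of a lift of the abelian quotient, is slightly more delicate than your sketch but works as you predict). However, there is a genuine gap in your ellipticity step. You assert that a horizontally bounded mapping class has \emph{displaceable} support, and that by \cite{MR} displaceability of the support forces bounded orbits in every continuous action. That implication is false as stated: knowing that the support is disjoint from one homeomorphic translate of itself does not bound orbits (already for finite-type surfaces, a Dehn twist whose supporting annulus is displaceable acts loxodromically on the Bestvina--Bromberg--Fujiwara quasi-tree built from annular curve graphs). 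What the paper actually needs, and proves, is the much stronger statement that the support can be displaced \emph{towards infinity}: by \cite[Theorem~5.7]{MR} there is a fixed finite-type subsurface $K$ such that the mapping classes supported in $\Sigma\setminus K$ form a CB subgroup, and one must produce a homeomorphism $\eta$ pushing the horizontally bounded support into $\Sigma\setminus K$, so that the element is conjugate into that CB subgroup (Lemmas~\ref{lemma:hb-in-avenue} and~\ref{lemma:bdd-cb}). Establishing this displacement is not routine, contrary to your assessment: it requires knowing that an avenue surface has exactly two maximal ends (Lemma~\ref{lemma:avenue}), that every other equivalence class of ends accumulates at both of them (Lemma~\ref{lemma:E(A,B)}), and a tameness-based homeomorphism classification of standard horizontally bounded subsurfaces (Lemma~\ref{lemma:homeomorphic-subsurfaces}). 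Note also that the hypothesis that $\Sigma$ has no nondisplaceable finite-type subsurface enters through these structural lemmas on the end space, not by being applied directly to the support of the element, as you suggest.

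A second, related inaccuracy: you define horizontally bounded classes as those supported on a \emph{finite-type} subsurface avoiding neighborhoods of both special ends, whereas the paper allows, and needs, supports of infinite type. In the approximation step (Lemma~\ref{lemma:twist-morphism}), an element of the kernel of the shift homomorphisms is approximated by mapping classes $f_j$ supported on an exhaustion by standard horizontally bounded subsurfaces $R_{\ell(j)}$, and these are in general of infinite type: they must contain representatives of every uncountable equivalence class of ends in order for the homeomorphism classification of Lemma~\ref{lemma:homeomorphic-subsurfaces} to apply. With your finite-type restriction, the closure of the resulting normal subgroup would in general be too small to contain the kernel, so the step you yourself correctly single out as the crux of the argument would fail.
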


\begin{rk}
It would be interesting to know whether the continuity assumption can be removed from the statement of Theorem~\ref{theo:no-hyperbolic-action}, in other words, whether $\Map(\Sigma)$ admits any nonelementary isometric action on a hyperbolic metric space $X$ at all. A related question is whether $\Map(\Sigma)$ satisfies an automatic continuity property, saying that every homomorphism from $\Map(\Sigma)$ to a separable topological group (e.g.\ to $\Isom(X)$ where $X$ is a separable metric space) is continuous. This question has been recently answered in a positive way by Mann \cite{Man2} in some cases, when the end space of $\Sigma$ is the union of a Cantor set and a finite set. The question however seems to remain open in general.
\end{rk}

\subsection{An obstruction to continuous isometric actions on hyperbolic spaces}

Our proof of Theorem~\ref{theo:no-hyperbolic-action} relies on the following general criterion. We will check that $\Map(\Sigma)$ satisfies this criterion in the next section.

\begin{lemma}\label{lemma:no-hyperbolic-criterion}
Let $G$ be a topological group. Assume that there exists a split short exact sequence $$1\to N\to G\to A\to 1$$ with $A$ abelian and $N$ contained in the closure of a normal subgroup $H\unlhd G$ such that for every element $h\in H$, there exists a CB subgroup of $G$ that contains $h$. 

Then $G$ does not have any continuous nonelementary isometric action on a hyperbolic space.
\end{lemma}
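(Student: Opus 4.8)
The plan is to argue by contradiction: assume $G$ acts continuously, isometrically and nonelementarily (equivalently, of general type) on a hyperbolic space $X$, and derive a contradiction. Recall that such an action has unbounded orbits, no finite orbit in $\partial_\infty X$, and contains two independent loxodromic elements. The overall strategy is first to show that the whole normal subgroup $H$ — and hence its closure, hence $N$ — must act with bounded orbits, and then to exploit that $A=G/N$ is abelian together with a ping-pong free subgroup to produce a single element that is simultaneously loxodromic and elliptic.

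First I would record that every element $h\in H$ acts elliptically. Indeed, by hypothesis $h$ lies in a CB subgroup $K\subseteq G$; since the action on $X$ is continuous and isometric, the definition of coarse boundedness forces $K$ to have bounded orbits, so in particular $h$ has bounded orbits. Consequently the restricted $H$-action has no loxodromic element, and by the classification of isometric actions on hyperbolic spaces into bounded, horocyclic, lineal, focal and general-type actions \cite{Gro} it is either bounded or horocyclic. If it were horocyclic, $H$ would fix a unique point $\xi\in\partial_\infty X$; but for every $g\in G$ and every $h\in H$ one has $h\cdot(g\xi)=g\,(g^{-1}hg)\xi=g\xi$, because $g^{-1}hg\in H$ fixes $\xi$, so $g\xi$ is an $H$-fixed point and hence $g\xi=\xi$ by uniqueness. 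Thus $G$ would fix $\xi$, contradicting the absence of finite orbits in $\partial_\infty X$. Therefore the $H$-action is bounded.

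Next I would upgrade this to $N$. Fixing a basepoint $x_0$ with $\diam(H\cdot x_0)\le R$, continuity of the action gives, for any $g\in\overline H$ and any net $h_i\to g$ in $H$, the estimate $d(x_0,g x_0)=\lim_i d(x_0,h_i x_0)\le R$; hence $\overline H$ still has bounded orbits, and since $N\subseteq\overline H$ the normal subgroup $N$ acts with bounded orbits, i.e.\ every element of $N$ is elliptic. Finally I would bring in the abelian quotient. Using that the action is of general type, a standard ping-pong argument applied to high powers of two independent loxodromics produces a free subgroup $F=\langle a,b\rangle\subseteq G$ every nontrivial element of which is loxodromic for the $G$-action. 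Since $N\unlhd G$, the group $F\cap N$ is normal in $F$ and $F/(F\cap N)\cong FN/N$ embeds into $G/N\cong A$, which is abelian; therefore $[F,F]\subseteq F\cap N\subseteq N$. In particular the nontrivial commutator $[a,b]$ lies in $N$ and is thus elliptic, while being a nontrivial element of $F$ it is loxodromic — a contradiction, completing the proof.

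The routine ingredients are the classification of actions and the normality/fixed-point argument; the step I expect to be the genuine obstacle is the last one, namely passing from ``$N$ has bounded orbits and $G/N$ is abelian'' to a contradiction. The naive attempt of taking two loxodromics $g_1,g_2$ and observing $[g_1,g_2]\in N$ is insufficient, because in a possibly non-proper action the commutator of two independent loxodromics need not be loxodromic (it may be parabolic or even elliptic). The device that repairs this is to first extract a Schottky-type free subgroup $F$ in which \emph{every} nontrivial element is loxodromic, so that $[a,b]\in[F,F]\subseteq N$ is guaranteed loxodromic. I note that the splitting of the exact sequence is not actually needed for this argument: only the abelianity of $A=G/N$ and the inclusion $N\subseteq\overline H$ are used.
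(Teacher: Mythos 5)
Your proof is correct, and its second half takes a genuinely different route from the paper's. The first half coincides: you show every element of $H$ is elliptic (via CB subgroups), invoke Gromov's classification to conclude the $H$-action is bounded or horocyclic, dispose of the horocyclic case using normality of $H$ (the paper phrases this via the $G$-invariance of the single point in the limit set of $H$, you via uniqueness of the $H$-fixed boundary point — essentially the same argument), and then pass to $\overline{H}\supseteq N$ by continuity. The divergence is in the endgame. The paper uses the splitting: it takes a lift $T$ of $A$ in $G$, notes that $T$ abelian forces its limit set $\Lambda_\infty T$ to be empty or of cardinality $1$ or $2$, and then shows via the $G$-invariant set $Y_M=\{x\in X \mid \diam(N\cdot x)\le M\}$ that $\Lambda_\infty T$ is fixed by $N$, hence is a finite $G$-invariant subset of $\partial_\infty X$ since $G=NT$. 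You instead extract a Schottky subgroup $F=\langle a,b\rangle$ all of whose nontrivial elements are loxodromic, observe that $F/(F\cap N)$ embeds in the abelian group $G/N$ so that $[a,b]\in N$, and derive the loxodromic-versus-elliptic contradiction. What each approach buys: yours never uses the splitting — only that $G/N$ is abelian — so your lemma is formally stronger; and you correctly diagnose why the naive commutator of two independent loxodromics does not suffice, repairing it with the purely loxodromic free subgroup. The cost is that you import the (standard but not completely trivial) ping-pong fact that high powers of two independent loxodromics generate a free group that is purely loxodromic, valid without any properness or acylindricity assumption; the paper's argument stays within elementary limit-set considerations and, rather than arguing by contradiction, directly exhibits the dichotomy "bounded orbits or finite boundary orbit."
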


\begin{proof}
Let $X$ be a hyperbolic space equipped with a continuous isometric action of $G$. We aim to prove that either all $G$-orbits in $X$ have finite diameter, or else that $G$ has a finite orbit in $\partial_\infty X$.
 
Since every element $h\in H$ is contained in some CB subgroup of $G$ (that might depend on $h$), it follows that every element of $H$ acts elliptically on $X$. Using Gromov's classification of isometric group actions on hyperbolic spaces (\cite{Gro}, see also e.g.\ \cite[Proposition~3.1]{CCMT}), we deduce that either all $H$-orbits in $X$ have finite diameter, or else the $H$-action on $X$ is horocyclic. 
In the latter case, as $H$ is normal in $G$, the unique point of $\partial_\infty X$ in the limit set of $H$ is $G$-invariant.

We can therefore assume that all $H$-orbits in $X$ have finite diameter. As the $G$-action on $X$ is continuous and $N$ is contained in the closure of $H$, it follows that all $N$-orbits in $X$ have finite diameter.  

Let $T$ be a lift of $A$ in $G$. Then every element of $G$ is a product of an element of $N$ and an element of $T$. As $T$ is abelian, either all $T$-orbits in $X$ have finite diameter, or else its limit set $\Lambda_\infty T\subseteq\partial_\infty X$ has cardinality $1$ or $2$. If all $T$-orbits in $X$ have finite diameter, then the same holds true for all $G$-orbits and we are done. 

We finally assume that $|\Lambda_\infty T|\in\{1,2\}$. Let $M\ge 0$ be sufficiently large so that $Y_M:=\{x\in X|\mathrm{diam}(N\cdot x)\le M\}$ is nonempty. By normality of $N$, the set $Y_M$ is $G$-invariant. In particular $Y_M$ is $T$-invariant, so denoting by $\Lambda_\infty Y_M\subseteq\partial_\infty X$ its limit set, we have $\Lambda_\infty T\subseteq\Lambda_\infty Y_M$. In addition, it follows from the definition of $Y_M$ that every point in $\Lambda_\infty Y_M$ is fixed by $N$. In particular $\Lambda_\infty T$ is $N$-invariant. As $G$ is generated by $N$ and $T$ it follows that $\Lambda_\infty T$ is a finite $G$-invariant set in $\partial_\infty X$. This concludes our proof.    
\end{proof}

\subsection{Avenue surfaces and proof of Theorem~\ref{theo:no-hyperbolic-action}}

\subsubsection{Avenue surfaces}

In proving Theorem~\ref{theo:no-hyperbolic-action}, we can always assume that $\Map(\Sigma)$ itself is not CB, as otherwise the conclusion is obvious (every continuous isometric action of $\Map(\Sigma)$ on any metric space has bounded orbits). For the remainder of this section, we will therefore assume that the surface $\Sigma$ is of the following form.

\begin{de}
An \emph{avenue surface} is a connected orientable surface $\Sigma$ which does not contain any nondisplaceable subsurface of finite type, whose end space is tame, and whose mapping class group $\Map(\Sigma)$ is CB-generated but not CB. 
\end{de}

The terminology \emph{avenue surface} comes from the fact that $\Sigma$ has exactly two maximal ends, as established in our next lemma. An example to keep in mind is the \emph{avenue of chimneys} from the introduction, depicted in Figure~\ref{fig:chimneys}. We recall (see Section~\ref{sec:statement}) that the endspace $E$ of $\Sigma$ is equipped with a partial order $\preccurlyeq$. This induces an equivalence relation on $E$, where two ends $x,y$ are equivalent if and only if $x\preccurlyeq y$ and $y\preccurlyeq x$. In the sequel, when we talk about equivalence classes of ends, this will always be with respect to this equivalence relation. 

\begin{lemma}\label{lemma:avenue}
Let $\Sigma$ be an avenue surface. Then $\Sigma$ has either $0$ or infinite genus, and $\Sigma$ has exactly two ends of maximal type. 
\end{lemma}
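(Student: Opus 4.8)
The plan is to treat the two assertions---the dichotomy for the genus and the count of maximal ends---separately, the second being the more substantial. For the genus I would argue by contradiction, supposing $0<g(\Sigma)<+\infty$. Then $E^g=\emptyset$, so all of the genus is carried on a compact part of $\Sigma$; using the exhaustion of $\Sigma$ by connected finite-type subsurfaces I would choose one such subsurface $K$ large enough that $g(\widehat{K})=g(\Sigma)$ and $\Sigma\setminus K$ is planar. For any $\phi\in\Homeo(\Sigma)$ the complement $\Sigma\setminus\phi(K)$ is homeomorphic to $\Sigma\setminus K$, hence planar, so $\phi(K)$ also carries all of the genus. If $\phi(K)$ were disjoint from $K$, then $\Sigma$ would contain two disjoint subsurfaces each of genus $g(\Sigma)$, forcing $g(\Sigma)\ge 2g(\Sigma)$, which is impossible when $0<g(\Sigma)<+\infty$. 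Hence $\phi(K)\cap K\neq\emptyset$ for all $\phi$, so $K$ is a nondisplaceable subsurface of finite type, contradicting the definition of an avenue surface; therefore $g(\Sigma)\in\{0,+\infty\}$.

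For the ends I would first import the structural input of Mann and the third author \cite{MR}. Since $E$ is tame and $\Map(\Sigma)$ is CB-generated, the order $\preccurlyeq$ admits maximal elements, and since $\Map(\Sigma)$ is \emph{not} CB while $g(\Sigma)\in\{0,+\infty\}$, the end space $E$ is not self-similar; the classification of CB-generated mapping class groups \cite[Theorem~1.6]{MR} then yields that there are only finitely many ends of maximal type, say $x_1,\dots,x_k$ with $k\ge 1$. To exclude $k=1$, observe that if $x_1$ were the unique maximal end then every end of $E$ would lie below $x_1$, and together with the stable neighborhood of $x_1$ supplied by tameness this would force $E$ to be self-similar, whence $\Map(\Sigma)$ would be CB by \cite{MR}---a contradiction. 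Thus $k\ge 2$.

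The heart of the matter, and the step I expect to be the main obstacle, is the upper bound $k\le 2$. Suppose $k\ge 3$. As the $x_i$ are distinct points of the Hausdorff, totally disconnected space $E$, they are pairwise disjoint closed subsets, so Lemma~\ref{lemma:partition-ends} produces a $k$-holed sphere $K\subseteq\Sigma$ pairwise separating $x_1,\dots,x_k$; the components of $\Sigma\setminus K$ may then be labelled $U_1,\dots,U_k$ with $x_i\in\Ends(U_i)$. A direct check from the definition of $\preccurlyeq$---conjugating by $\phi$ the mapping classes that witness a relation $x\preccurlyeq y$---shows that every $\phi\in\Homeo(\Sigma)$ preserves $\preccurlyeq$, and $\phi$ also preserves $E^g$; hence $\phi$ permutes the finite set $\{x_1,\dots,x_k\}$, so $\phi(K)$ is again a $k$-holed sphere pairwise separating $x_1,\dots,x_k$. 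Were $\phi(K)$ disjoint from $K$, it would be contained in a single component $U_j$, and then all the ends $x_i$ with $i\neq j$---at least two of them, as $k\ge 3$---would lie in one and the same component of $\Sigma\setminus\phi(K)$, contradicting that $\phi(K)$ separates them pairwise. Thus $\phi(K)\cap K\neq\emptyset$ for every $\phi$, so $K$ is a nondisplaceable subsurface of finite type, again contradicting the avenue hypothesis; hence $k=2$. The two delicate points are the reduction to \emph{finitely many} maximal ends, which genuinely uses the non-CB hypothesis (the plane minus a Cantor set has infinitely many maximal ends but a CB mapping class group, so finiteness cannot follow from CB generation alone), and the verification that a $k$-holed sphere trapped inside one complementary component of another cannot pairwise separate three or more fixed ends.
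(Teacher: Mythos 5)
Your genus argument, your exclusion of a unique maximal end, and your deduction of $k\le 2$ \emph{once finiteness of the set of maximal ends is known} all run parallel to the paper's proof and are fine. The genuine gap is precisely the step you flagged as delicate: your claim that \cite[Theorem~1.6]{MR} yields finitely many ends of maximal type for a tame surface with CB-generated, non-CB mapping class group. It does not. What the Mann--Rafi results give is control on the \emph{equivalence classes} of maximal ends: each class is either finite or a Cantor set (\cite[Proposition~4.7]{MR}), and there are finitely many classes; nothing in the CB-generated classification excludes Cantor-set classes. Concretely, let $\Sigma_0$ be the surface obtained by gluing a Cantor tree (genus $0$, Cantor set of planar ends) to a blooming Cantor tree (infinite genus, Cantor set of ends accumulated by genus) along a circle. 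Its end space is tame; $\Map(\Sigma_0)$ is CB-generated by \cite[Theorem~1.6]{MR} (the gluing curve cuts $\Sigma_0$ into two pieces of genus $0$ and infinite genus with self-similar end spaces, and the end space is not of limit type), but it is not CB (the end space $(E,E^g)$ is neither self-similar nor telescoping, by the planarity obstruction); and yet \emph{every} end of $\Sigma_0$ is maximal -- the two Cantor sets form two equivalence classes, mutually incomparable because a mapping class must preserve $E^g$. So tame $+$ CB-generated $+$ non-CB does not bound the number of maximal ends, and your appeal to non-CB-ness cannot repair the citation.

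Of course $\Sigma_0$ is not an avenue surface -- it contains nondisplaceable finite-type subsurfaces -- and that is exactly the point: excluding infinite maximal classes is where the paper must invoke the hypothesis that $\Sigma$ has \emph{no} nondisplaceable finite-type subsurface, an input your finiteness step never uses. The paper's argument is: if some maximal equivalence class $X$ is infinite, hence a Cantor set, then either $X$ is the unique maximal class, in which case $(E,E^g)$ is self-similar by \cite[Lemma~4.13]{MR} and $\Map(\Sigma)$ is CB, a contradiction; or there is another maximal class $X'$, and one then splits $X=X_1\dunion X_2$ into two nonempty closed subsets, applies Lemma~\ref{lemma:partition-ends} to get a finite-type subsurface $K$ pairwise separating $X_1,X_2,X'$, and verifies -- by the same kind of separation bookkeeping you carry out for $k\le 2$, but with closed sets in place of points -- that $K$ is nondisplaceable, contradicting the avenue hypothesis. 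Only after this, when every maximal class is known to be finite, does \cite[Lemma~5.3]{MR} give finitely many classes, hence a finite set of maximal ends, after which your concluding $k\le 2$ argument applies verbatim. Since, as the example above shows, finiteness genuinely fails without the nondisplaceability hypothesis, no version of your step~3 can work as written, and this missing Cantor-class exclusion is the core of the paper's proof.
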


\begin{figure}
\centering
\def\JPicScale{.8}
\input{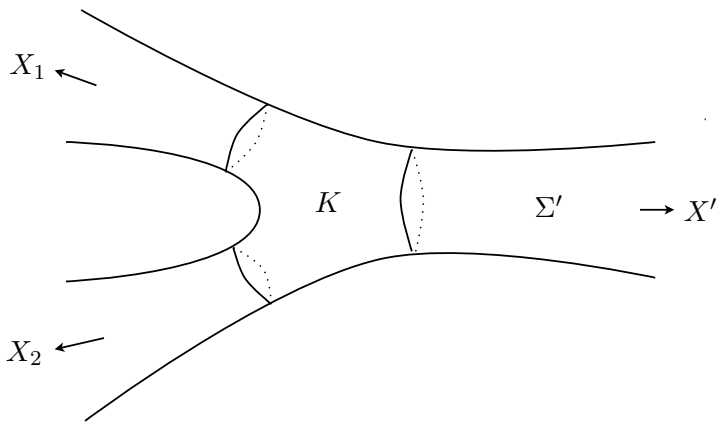}
\caption{The situation in the third paragraph of the proof of Lemma~\ref{lemma:avenue}.}
\label{fig:nondisplaceable}
\end{figure}

\begin{proof}
Notice first as in \cite[Example~2.4]{MR} that the genus of $\Sigma$ is either $0$ or infinite, as otherwise any finite type subsurface of $\Sigma$ whose genus matches that of $\Sigma$ would be nondisplaceable. 

Let $E$ be the end space of $\Sigma$, and let $\mathcal{M}$ be the set of all maximal elements in $E$. We aim to prove that $|\mathcal{M}|=2$. By \cite[Proposition~4.7]{MR}, the set $\mathcal{M}$ is nonempty. Also $|\mathcal{M}|\neq 1$: otherwise \cite[Lemma~4.12]{MR} implies that $(E,E^g)$ is \emph{self-similar} in the sense of \cite[Section~3.1]{MR}.\footnote{This means that for any partition $E=E_1\dunion\cdots\dunion E_n$ into clopen subsets, there exist $i\in\{1,\dots,n\}$ and a clopen subset $D\subseteq E_i$ such that $(D,D\cap E^g)$ is homeomorphic to $(E,E^g)$.} As $\Sigma$ has $0$ or infinite genus, \cite[Proposition~3.1]{MR} then implies that $\Map(\Sigma)$ is globally CB, contradicting our assumption that $\Sigma$ is an avenue surface.

We first claim that $\mathcal{M}$ does not contain any infinite equivalence class. Indeed, otherwise, let $X$ be such an equivalence class. By \cite[Proposition~4.7]{MR}, the class $X$, viewed as a subspace of $E$, is a Cantor set. If $X$ is the unique maximal equivalence class, then \cite[Lemma~4.13]{MR} implies that $(E,E^g)$ is self-similar, and therefore $\Map(\Sigma)$ is globally CB as above, a contradiction. So assume that there exists another maximal equivalence class $X'$ (that may be finite or infinite). We can partition $X$ into two disjoint closed subsets $X_1,X_2$. As $X'$ is a closed subset of $E$ (see \cite[Lemma~4.6]{MR}), it follows from Lemma~\ref{lemma:partition-ends} that there exists a finite-type subsurface $K$ of $\Sigma$ that pairwise separates $X_1,X_2$ and $X'$ -- see Figure~\ref{fig:nondisplaceable}. We now claim that this subsurface $K$ is nondisplaceable, which gives a contradiction.

To prove our claim that $K$ is nondisplaceable, let $\Sigma'$ be the connected component of $\Sigma\setminus K$ whose end set contains $X'$, and assume towards a contradiction that there exists $f\in\Homeo(\Sigma)$ such that $f(K)\cap K=\emptyset$. Then $f(K)$ cannot be contained in a component of $\Sigma\setminus K$ distinct from $\Sigma'$, as otherwise one of the complementary components of $f(\Sigma)$ would contain ends in both $X$ and $X'$. And $f(K)$ cannot be contained in $\Sigma'$ either as otherwise only one complementary components of $f(\Sigma)$ would contain ends in $X$. This contradiction shows that $K$ is nondisplaceable.  

Therefore, every equivalence class of maximal elements is finite, and by \cite[Lemma~5.3]{MR}, there are finitely many such classes. So $\mathcal{M}$ is finite. There remains to prove that $|\mathcal{M}|\le 2$, so assume towards a contradiction that $|\mathcal{M}|\ge 3$. By Lemma~\ref{lemma:partition-ends}, there exists a finite-type subsurface $K$ of $\Sigma$ that pairwise separates all ends in $\mathcal{M}$. We claim that the surface $K$ is nondisplaceable. Indeed, otherwise, there would exist $f\in\Homeo(\Sigma)$ such that $K$ is contained in one complementary component $\Sigma'$ of $f(K)$. Thus $\Ends(\Sigma')$ would contain two ends in $\calm$, which is impossible as $f^{-1}(\Sigma')$ is a complementary component of $K$. We have thus proved that $K$ is nondisplaceable: this contradiction completes our proof.
\end{proof}

\begin{lemma}\label{lemma:E(A,B)}
Let $\Sigma$ be an avenue surface, let $E$ be its endspace, and let $x_A,x_B$ be the two maximal ends of $\Sigma$. Then for every $x\in E\setminus\{x_A,x_B\}$, the equivalence class of $x$ accumulates to both $x_A$ and $x_B$.
\end{lemma}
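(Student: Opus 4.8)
The plan is to argue by contradiction: if the class of some $x\in E\setminus\{x_A,x_B\}$ fails to accumulate to one of the maximal ends, I will produce a nondisplaceable finite-type subsurface, contradicting the definition of an avenue surface. I will lean on three standing facts. First, every $f\in\Homeo(\Sigma)$ induces a homeomorphism of $E$ preserving $E^g$ and the order $\preccurlyeq$; hence it permutes the pair $\{x_A,x_B\}$ of maximal ends (Lemma~\ref{lemma:avenue}), sends each equivalence class homeomorphically onto an equivalence class, and satisfies $f(\overline{[x]})=\overline{[f(x)]}$. Second, by tameness each of $x_A,x_B$ has a clopen neighborhood $U_A,U_B$ with $(U_A,U_A\cap E^g)$ and $(U_B,U_B\cap E^g)$ self-similar, and in a self-similar neighborhood of a maximal end $m$ every end $y$ has its class accumulating to $m$; this \emph{domination property} of the maximal end of a self-similar space is from \cite{MR}. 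Third, a $3$-holed sphere $K$ realizing a clopen end-partition $\{P_1,P_2,P_3\}$ (Lemma~\ref{lemma:partition-ends}) is displaceable if and only if there are $i,j$ and $f\in\Homeo(\Sigma)$ with $\bigcup_{l\neq i}P_l\subseteq f(P_j)$: indeed $f(K)$ is connected and disjoint from $K$, hence lies in one complementary region of $K$, while the connected complement of the open region bounded off by the $i$-th side lies in one complementary region of $f(K)$.

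Assume first that $x_A\not\sim x_B$, so that every $f\in\Homeo(\Sigma)$ fixes $x_A$ and $x_B$ individually, and suppose $[x]$ does not accumulate to $x_B$. Choose a self-similar $U_B\ni x_B$ with $\overline{[x]}\cap U_B=\emptyset$ and $x_A\notin U_B$, a point $p\in[x]$, and a small clopen $Y_p\ni p$ disjoint from $U_B$ with $x_A\notin Y_p$; let $K$ realize the partition $\{\,U_B,\ Y_p,\ E\setminus(U_B\cup Y_p)\,\}$, whose third piece contains $x_A$. In the displaceability criterion, the case $i=Y_p$ is impossible, as it would force one region $f(P_j)$ to contain both maximal ends while each region contains at most one. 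In the two remaining cases one checks that $f(P_j)$ must be the image containing $x_B$, and tracking the induced partition of $E$ forces $f^{-1}(p)\in U_B$ (resp. $f(p)\in U_B$). By the domination property $f^{-1}(p)$ has class accumulating to $x_B$, i.e.\ $x_B\in\overline{[f^{-1}(p)]}=f^{-1}(\overline{[x]})$; applying $f$ and using $f(x_B)=x_B$ gives $x_B\in\overline{[x]}$, a contradiction. Hence $K$ is nondisplaceable. Thus $[x]$ accumulates to $x_B$, and the symmetric construction (with $U_A$ self-similar) gives accumulation to $x_A$.

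It remains to treat $x_A\sim x_B$, where a homeomorphism may swap the two maximal ends and the asymmetric argument breaks down. Here I first prove the weaker statement that $[x]$ accumulates to \emph{at least one} maximal end, by a swap-proof variant: assuming it accumulates to neither, pick disjoint self-similar neighborhoods $U_A\ni x_A$ and $U_B\ni x_B$ both avoiding $\overline{[x]}$, place $p\in[x]$ in the remaining region $R=E\setminus(U_A\cup U_B)$, and let $K$ realize $\{U_A,U_B,R\}$. Because both enclosed regions are now self-similar, the cases $i=A$ and $i=B$ are symmetric and lead to a contradiction whether or not $f$ swaps $x_A,x_B$: displaceability pushes $p$ into $U_A$ or into $U_B$, and the domination property together with $f(\{x_A,x_B\})=\{x_A,x_B\}$ yields one of $x_A,x_B\in\overline{[x]}$ in every subcase. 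Once $[x]$ accumulates to some $m\in\{x_A,x_B\}$, I upgrade to both using only the definition of $x_A\sim x_B$: it provides, for every neighborhood $U$ of the other maximal end $m'$, a neighborhood $V$ of $m$ and $g\in\Map(\Sigma)$ with $g(V)\subseteq U$; since $V$ meets $[x]$ and $g$ preserves classes, $g(V)\cap[x]\subseteq U\cap[x]\neq\emptyset$, so $[x]$ accumulates to $m'$ as well.

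The main obstacle, and the reason for the case split, is precisely that $\Homeo(\Sigma)$ does \emph{not} preserve the individual class $[x]$ — it only permutes classes and fixes $\{x_A,x_B\}$ setwise — so no direct counting of end-types is a homeomorphism invariant. The mechanism that resolves this is to isolate a maximal end inside a self-similar neighborhood disjoint from $\overline{[x]}$: any homeomorphism certifying displaceability is then forced to drive a genuine $[x]$-end into that neighborhood, where the domination property reinterprets this as the very accumulation we assumed to fail. Using symmetric self-similar regions makes the argument insensitive to swaps and thereby disposes of the equivalent case $x_A\sim x_B$, in which the two maximal ends instead become linked by the definitional comparison maps.
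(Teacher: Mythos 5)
Your overall strategy (force a displacing homeomorphism to push a point of $[x]$ into a controlled neighborhood of a maximal end, and derive a contradiction) does work, but the proposal contains a genuine conceptual error that several of your steps secretly contradict. In your closing paragraph you assert that $\Homeo(\Sigma)$ ``does \emph{not} preserve the individual class $[x]$ --- it only permutes classes.'' This is false: since the order $\preccurlyeq$ is defined via mapping classes, for every $f\in\Homeo(\Sigma)$ and every end $y$ one has $f(y)\sim y$ (given a neighborhood $U$ of $y$, take $V=f(U)$ and the mapping class of $f^{-1}$ to witness $y\preccurlyeq f(y)$, and symmetrically), so $f([x])=[x]$ and $f(\overline{[x]})=\overline{[x]}$ for every class. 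Two of your steps are only valid because of this fact that you disclaim: (i) the claim that when $x_A\not\sim x_B$ every homeomorphism fixes $x_A$ and $x_B$ individually --- an order-isomorphism of $(E,\preccurlyeq)$ that merely permutes classes could perfectly well swap two inequivalent maximal singleton classes; the only reason it cannot is that $f(x_A)\sim x_A$; and (ii) the upgrade step in the $x_A\sim x_B$ case, where ``$g$ preserves classes'' must mean $g([x])=[x]$ (setwise preservation of the individual class), not merely that $g$ sends classes to classes; under the weaker reading, $g(V\cap[x])$ lands in some class $[g(x)]$ that you cannot identify with $[x]$, and the step collapses. So, as written, the proof is internally inconsistent; it becomes correct once you acknowledge the class-preservation fact.

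Once that fact is granted, your argument goes through, but it is considerably heavier than necessary --- and this is exactly what the paper exploits. The paper's proof needs no tameness, no stable or self-similar neighborhoods, no domination property, and no case split on whether $x_A\sim x_B$: assuming $x_B\notin\overline{[x]}$, it partitions the closed set $\overline{[x]}\cup\{x_A\}$ into two nonempty closed pieces $X_1\dunion X_2$, separates $X_1$, $X_2$, $\{x_B\}$ by a three-holed sphere $K$ (Lemma~\ref{lemma:partition-ends}), and checks nondisplaceability in a few lines from the same nesting combinatorics you use, together with $f([x])=[x]$ and $f(\{x_A,x_B\})=\{x_A,x_B\}$: a disjoint image $f(K)$ would create a complementary region of $f(K)$ whose end set contains either ($x_B$ together with points of $[x]$) or ($x_A$ together with all of $[x]$), and no complementary region of $K$ has the corresponding property. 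Your extra machinery (stable neighborhoods from tameness, their self-similarity, and the domination property) consists of statements that are true and derivable from \cite{MR}, but the domination property as you state it deserves more care than a bare citation --- it concerns accumulation of classes taken in $E$, and passing from the abstract embeddings furnished by stability to ambient equivalence requires a change-of-coordinates argument. All of this overhead is the price of refusing the class-preservation fact whose denial you present as the ``main obstacle''; the obstacle does not exist.
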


\begin{proof}
Let $[x]$ be the equivalence class of $x$. Assume towards a contradiction that $[x]$ does not accumulate to $x_B$. The set $\overline{[x]}\cup\{x_A\}$ is a closed subset of $E$ of cardinality at least two, so it admits a nontrivial partition $\overline{[x]}\cup\{x_A\}=X_1\dunion X_2$ into two subsets which are both closed in $E$. By Lemma~\ref{lemma:partition-ends}, there exists a finite-type subsurface $K\subseteq\Sigma$ that pairwise separates the closed sets $X_1,X_2$ and $\{x_B\}$. We claim that $K$ is nondisplaceable: this will give a contradiction, showing that $[x]$ accumulates to $x_B$. By symmetry $[x]$ also accumulates to $x_A$, which completes our proof. 

We are thus left proving the above claim, that $K$ is nondisplaceable. Assume towards a contradiction that there exists $f\in\Homeo(\Sigma)$ such that $f(K)\cap K\neq\emptyset$. Let $\Sigma_B$ be the complementary component of $\Sigma\setminus K$ that contains $x_B$. Then $f(K)$ cannot be contained in a complementary component of $K$ distinct from $\Sigma_B$, as otherwise one complementary component of $f(K)$ would contain both $x_B$ and ends in $[x]$ -- while no complementary component of $K$ does. And $f(\Sigma)$ cannot be contained in $\Sigma_B$, as otherwise one of the complementary components of $f(K)$ would contain both $x_A$ and all ends in $[x]$ -- while no complementary component of $K$ does. This contradiction shows that $K$ is nondisplaceable.
\end{proof}

\subsubsection{Horizontally bounded mapping classes}

\begin{de} 
Let $\Sigma$ be an avenue surface, and let $x_A,x_B$ be the two maximal ends of $\Sigma$. 

A subsurface $R$ of $\Sigma$ (possibly of infinite type) is \emph{horizontally bounded} if $R$ is disjoint from some neighborhood of $x_A$ and from some neighborhood of $x_B$, i.e.\ if $\Ends(R)\cap\{x_A,x_B\}\neq\emptyset$. 

An element $f\in\Map(\Sigma)$ is \emph{horizontally bounded} if $f$ has a representative in $\Homeo(\Sigma)$ which is supported on a horizontally bounded subsurface. 
\end{de} 

An example of a horizontally bounded subsurface is the subsurface $R$ represented in Figure~\ref{fig:chimneys} on the avenue of chimneys. We will also say that a horizontally bounded subsurface is \emph{standard} if it is bounded by exactly two separating curves. Notice that every horizontally bounded subsurface is contained in a standard one (because every neighborhood of either $x_A$ or $x_B$ in $\Sigma\cup E(\Sigma)$ contains a subneighborhood bounded by a single separating curve, as follows from Lemma~\ref{lemma:partition-ends}). Therefore, every horizontally bounded mapping class has a representative in $\Homeo(\Sigma)$ that is supported on a standard horizontally bounded subsurface.

We let $\Map^0(\Sigma)$ be the subgroup of $\Map(\Sigma)$ of index at most $2$ made of all mapping classes that fix the two maximal ends $x_A$ and $x_B$, as opposed to permuting them. We denote by $\Bdd(\Sigma)$ the subset of $\Map^0(\Sigma)$ made of all horizontally bounded mapping classes. 

\begin{lemma}\label{lemma:bdd-normal}
Let $\Sigma$ be an avenue surface. The subset $\Bdd(\Sigma)$ is a normal subgroup of $\Map^0(\Sigma)$. 
\end{lemma}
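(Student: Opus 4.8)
The plan is to show that $\Bdd(\Sigma)$ is a subgroup of $\Map^0(\Sigma)$ and that it is closed under conjugation by elements of $\Map^0(\Sigma)$. I would first recall that, as noted just before the statement, every horizontally bounded mapping class has a representative supported on a \emph{standard} horizontally bounded subsurface, i.e.\ one bounded by exactly two separating curves, one cutting off a neighborhood of $x_A$ and one cutting off a neighborhood of $x_B$. This normal form is the key technical convenience, so I would work with it throughout.

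\textbf{Closure under products.} Let $f,f'\in\Bdd(\Sigma)$, with representatives supported on standard horizontally bounded subsurfaces $R$ and $R'$ respectively. The support of $ff'$ is contained in $R\cup R'$. The point is that $R\cup R'$ is again horizontally bounded: since $R$ is disjoint from some neighborhood $U_A$ of $x_A$ and some neighborhood $U_B$ of $x_B$, and similarly $R'$ is disjoint from neighborhoods $U_A'$, $U_B'$, the subsurface $R\cup R'$ is disjoint from $U_A\cap U_A'$ (a neighborhood of $x_A$) and from $U_B\cap U_B'$ (a neighborhood of $x_B$). Hence $\Ends(R\cup R')$ avoids both $x_A$ and $x_B$, so $ff'$ is horizontally bounded. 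Inverses are immediate since $f^{-1}$ is supported on the same subsurface as $f$. Thus $\Bdd(\Sigma)$ is a subgroup; it is contained in $\Map^0(\Sigma)$ because a mapping class supported away from both $x_A$ and $x_B$ fixes these ends.

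\textbf{Normality.} Let $g\in\Map^0(\Sigma)$ and $f\in\Bdd(\Sigma)$ with $f$ supported on a horizontally bounded subsurface $R$. Choosing a homeomorphism $\phi$ representing $g$ and $\psi$ representing $f$ supported on $R$, the conjugate $gfg^{-1}$ is represented by $\phi\psi\phi^{-1}$, which is supported on $\phi(R)$. So it suffices to check that $\phi(R)$ is horizontally bounded, i.e.\ that $\Ends(\phi(R))$ avoids both $x_A$ and $x_B$. This is exactly where the hypothesis $g\in\Map^0(\Sigma)$ enters: since $\phi$ represents an element fixing each of $x_A$ and $x_B$, and $R$ is disjoint from a neighborhood $U_A$ of $x_A$, the image $\phi(R)$ is disjoint from the neighborhood $\phi(U_A)$ of $\phi(x_A)=x_A$, and likewise for $x_B$. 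Therefore $\phi(R)$ avoids a neighborhood of $x_A$ and a neighborhood of $x_B$, so $gfg^{-1}\in\Bdd(\Sigma)$.

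\textbf{The main obstacle.} The routine set-theoretic manipulations above are not the real content; the subtlety I expect to be the crux is tracking the relationship between a \emph{mapping class} and the supports of its \emph{representatives}, and making sure the end-space conditions are stated at the level of isotopy classes. In particular, one must ensure that being ``supported on a horizontally bounded subsurface'' is an invariant of the mapping class and interacts correctly with the action of homeomorphisms on the end space: the step $\phi(x_A)=x_A$ relies on $g$ preserving the maximal ends, and one should be careful that the neighborhood $\phi(U_A)$ is genuinely a neighborhood of $x_A$, which is where fixing (rather than merely permuting) the maximal ends is indispensable. Once these identifications at the level of isotopy classes and their action on $E(\Sigma)$ are made precise, the argument closes cleanly.
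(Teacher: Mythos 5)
Your proof is correct and takes essentially the same approach as the paper's: closure under products via the observation that the union of two horizontally bounded subsurfaces is again horizontally bounded, and normality via the fact that the support of $gfg^{-1}$ is the $g$-translate of the support of $f$, which remains horizontally bounded precisely because $g\in\Map^0(\Sigma)$ fixes both maximal ends. The extra care you take distinguishing mapping classes from representatives, and justifying $\phi(x_A)=x_A$, is sound but just an expansion of the paper's terser argument.
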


\begin{proof}
That $\Bdd(\Sigma)$ is a subgroup follows from the observation that the union of two horizontally bounded subsurfaces is again horizontally bounded. Normality follows from the fact that if $f\in\Bdd(\Sigma)$ and $g\in\Map^0(\Sigma)$, then the support of $gfg^{-1}$ is equal to the $g$-translate of the support of $f$, again a horizontally bounded subsurface. 
\end{proof}

\subsubsection{Horizontally bounded mapping classes are contained in CB subgroups}

\begin{lemma}\label{lemma:homeomorphic-subsurfaces}
Let $\Sigma$ be an avenue surface, with maximal ends $x_A$ and $x_B$. Let $R_1,R_2\subseteq\Sigma$ be two standard horizontally bounded subsurfaces. Assume that 
\begin{enumerate}
\item $R_1$ and $R_2$ have the same genus (possibly infinite),
\item for every maximal countable equivalence class $C$ of ends in $E\setminus\{x_A,x_B\}$, the intersections $C\cap\Ends(R_1)$ and $C\cap\Ends(R_2)$ have the same cardinality, and
\item $R_1$ and $R_2$ both contain a representative of every uncountable equivalence class of ends. 
\end{enumerate}
Then $K_1$ and $K_2$ are homeomorphic.
\end{lemma}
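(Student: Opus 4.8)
The plan is to deduce the homeomorphism from the classification of surfaces of Richards \cite{Ric}. Since $R_1$ and $R_2$ are standard, each is a bordered surface with exactly two boundary components, so they are homeomorphic as soon as they have the same genus and there is a homeomorphism of end-space pairs $(\Ends(R_1),\Ends(R_1)\cap E^g)\to(\Ends(R_2),\Ends(R_2)\cap E^g)$. The genera agree by hypothesis~(1), so the whole problem reduces to constructing such an end-space homeomorphism. As preliminary remarks I would note, first, that because each $R_i$ is cut out by two separating curves and is horizontally bounded, $\Ends(R_i)$ is a clopen subset of $E$ containing neither $x_A$ nor $x_B$; and second, that $E^g$ is a union of equivalence classes for the relation $\sim$ associated to $\preccurlyeq$. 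Indeed, if $x\preccurlyeq y$ and $y\in E^g$, then for every neighborhood $U$ of $x$ there are a neighborhood $V$ of $y$ and $f\in\Map(\Sigma)$ with $f(V\cap E^g)\subseteq U\cap E^g$; as $y\in V\cap E^g$, the point $f(y)\in U\cap E^g$ witnesses that $U$ meets $E^g$, whence $x\in\overline{E^g}=E^g$. Applying this in both directions shows each $\sim$-class lies entirely inside or entirely outside $E^g$, so any homeomorphism matching $\sim$-classes to $\sim$-classes automatically respects $E^g$.

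The construction of the homeomorphism then proceeds stratum by stratum. For an uncountable equivalence class $C$ of $E\setminus\{x_A,x_B\}$, the subspace $C$ is a Cantor set by \cite[Proposition~4.7]{MR}; since $\Ends(R_i)$ is clopen, each $C\cap\Ends(R_i)$ is a clopen subset of $C$, nonempty by hypothesis~(3), hence itself a Cantor set, and any two Cantor sets are homeomorphic. For a maximal countable equivalence class $C$, hypothesis~(2) gives $|C\cap\Ends(R_1)|=|C\cap\Ends(R_2)|$, and I would fix a bijection between these sets. The essential structural point is that every maximal class of $E\setminus\{x_A,x_B\}$ is an immediate predecessor of $x_A$ and of $x_B$: by Lemma~\ref{lemma:E(A,B)} such a class accumulates to both maximal ends, and anything strictly between it and $x_A$ (or $x_B$) would again lie in $E\setminus\{x_A,x_B\}$, contradicting maximality. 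Hence tameness furnishes each such class with a stable neighborhood, and this self-similarity is exactly what upgrades the above bijections and Cantor-set identifications into homeomorphisms between neighborhoods of the corresponding strata in $\Ends(R_1)$ and $\Ends(R_2)$.

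Finally I would assemble these local homeomorphisms into a global one: using Lemma~\ref{lemma:partition-ends} together with the stable neighborhoods, one chooses matching clopen partitions of $\Ends(R_1)$ and $\Ends(R_2)$ subordinate to the maximal strata, on each piece of which the homeomorphism has already been defined, and glues them, the non-maximal ends accumulating only onto the maximal classes. Since the partition respects $\sim$-classes and $E^g$ is saturated, the resulting map sends $\Ends(R_1)\cap E^g$ onto $\Ends(R_2)\cap E^g$, and Richards' theorem yields $R_1\cong R_2$. I expect this assembly to be the main obstacle: one must verify that the per-class homeomorphisms built from the self-similarity of the top two strata genuinely patch into a single homeomorphism of the entire end space, carefully controlling how the non-maximal ends accumulate onto the maximal classes (where tameness gives no direct information). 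The infinite-genus case demands the most care, since there one must track $E^g$ throughout rather than simply observe that it is empty.
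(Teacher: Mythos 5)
Your opening reductions do match the paper's proof: invoking Richards to reduce everything to a homeomorphism of pairs $(\Ends(R_1),\Ends(R_1)\cap E^g)\to(\Ends(R_2),\Ends(R_2)\cap E^g)$, observing that $E^g$ is saturated under the equivalence relation, and handling the finitely many maximal countable classes by the bijection from hypothesis~(2) together with homeomorphic stable neighborhoods $V_{1,y}\cong V_{2,\theta(y)}$ furnished by tameness. The gap is everything else, which is exactly the part you defer to the ``assembly'' step. First, your treatment of uncountable classes fails: hypothesis~(3) guarantees only \emph{one} representative in each $R_i$, so $C\cap\Ends(R_1)$ and $C\cap\Ends(R_2)$ need not have the same cardinality, let alone be homeomorphic; moreover $C$ is not closed in $E$ (by Lemma~\ref{lemma:E(A,B)} it accumulates at $x_A,x_B\notin\Ends(R_i)$), so neither $C$ nor $C\cap\Ends(R_i)$ is forced to be a Cantor set, and ``any two Cantor sets are homeomorphic'' is not available (the citation of \cite[Proposition~4.7]{MR} concerns classes maximal in $E$, not in $E\setminus\{x_A,x_B\}$). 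Second, and more fundamentally, the hypotheses say nothing about ends lying in \emph{non-maximal} classes: $\Ends(R_1)$ and $\Ends(R_2)$ may meet such a class in sets of completely different cardinalities, so the ``matching clopen partitions \dots\ on each piece of which the homeomorphism has already been defined'' simply do not exist. No gluing argument can start from per-stratum homeomorphisms when on most strata no homeomorphism exists.

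The missing idea is that these leftover ends must be \emph{absorbed}, not matched. The paper sets $W_i=\Ends(R_i)\setminus\bigcup_{y\in X_i}V_{i,y}$, where $X_i$ is the (finite, by \cite[Observation~6.12]{MR}) set of ends of $\Ends(R_i)$ that are maximal in $E\setminus\{x_A,x_B\}$ with countable class, and proves the key claim that $W_i\cup\Ends(R_{3-i})$ is homeomorphic to $\Ends(R_{3-i})$ by an $E^g$-preserving homeomorphism. This uses \cite[Proposition~4.7]{MR} to find, for each $w\in W_i$, a maximal-type point $w'\in\Ends(R_{3-i})$ at which the class of $w$ accumulates (hypothesis~(3) is what makes such a point available on the other side), and then \cite[Lemma~4.18]{MR}: there are a clopen neighborhood $U_w$ of $w$ and a stable neighborhood $V_{w'}$ of $w'$ with $U_w\cup V_{w'}\cong V_{w'}$; compactness of $W_i$ then finishes the claim. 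Note that this is precisely a use of tameness to control the non-maximal ends, contrary to your parenthetical remark that tameness ``gives no direct information'' about them. Given the claim, the proof concludes by the chain $\Ends(R_1)\cong W_2\cup\Ends(R_1)=W_1\cup\bigl(W_2\cup\bigcup_{y\in X_1}V_{1,y}\bigr)\cong W_1\cup\bigl(W_2\cup\bigcup_{y\in X_1}V_{2,\theta(y)}\bigr)=W_1\cup\Ends(R_2)\cong\Ends(R_2)$, a Schroeder--Bernstein-style argument in which the non-maximal and uncountable strata are never matched at all. Without this absorption mechanism, or a substitute for it, your construction cannot be completed.
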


\begin{proof}
The proof is analogous to the argument found in \cite[Lemma~6.17]{MR}. For every $i\in\{1,2\}$, let $X_i$ be the set of ends that are maximal in $E\setminus\{x_A,x_B\}$ and contained in $\Ends(R_i)$, and whose equivalence class in $E$ is countable. By \cite[Observation~6.12]{MR}, the sets $X_1$ and $X_2$ are finite, and our second assumption ensures that there is a bijection $\theta:X_1\to X_2$ such that for every $y\in X_1$, the ends $y$ and $\theta(y)$ are equivalent. As $\Sigma$ has tame endspace, for every $i\in\{1,2\}$, every end $y\in X_i$ has a stable neighborhood $V_{i,y}$. As any two stable neighborhoods of equivalent ends are homeomorphic \cite[Lemma~4.17]{MR}, for every $y\in X_1$, the neighborhoods $V_{1,y}$ and $V_{2,\theta(y)}$ are homeomorphic.

For every $i\in\{1,2\}$, let $$W_i=\Ends(R_i) - \bigcup_{y\in X_i} V_{i,y}.$$ We claim that for every $i\in\{1,2\}$, the space $W_i\cup\Ends(R_{3-i})$ is homeomorphic to $\Ends(R_{3-i})$ by a homeomorphism preserving the subspace made of ends that are accumulated by genus. Indeed, using \cite[Proposition~4.7]{MR}, we see that for every point $w\in W_i$, there exists a point $w'\in\Ends(R_{3-i})$ of maximal type in $E\setminus\{x_A,x_B\}$ such that $w'$ is an accumulation point of $E(w)$. As $\Sigma$ has tame endspace, the point $w'$ has a stable neighborhood. By \cite[Lemma~4.18]{MR}, there exist a clopen neighborhood $U_w$ of $x$ and a stable neighborhood $V_{w'}$ of $w'$ such that $U_w\cup V_{w'}$ is homeomorphic to $V_{w'}$. As $W_i$ is compact, it is covered by finitely many neighborhoods $U_w$. By considering all intersections of these neighborhoods, we can in fact write $W_i$ as the disjoint union of finitely many neighborhoods $U_w$ with the above property. The claim follows.

The above claim implies that $\Ends(R_1)$ is homeomorphic to $$W_2\cup\Ends(R_1)=W_1\cup \left(W_2\cup\bigcup_{y\in X_1} V_{1,y}\right),$$ which in turn is homeomorphic to $$W_1\cup\Ends(R_2)=W_1\cup \left(W_2\cup\bigcup_{y\in X_1}V_{2,\theta(y)}\right),$$ and finally to $\Ends(R_2)$. All these homeomorphisms preserve the subspaces made of ends accumulated by genus. So $R_1$ and $R_2$ have homeomorphic endspaces. As in addition they have the same genus, and they both have two boundary curves (because they are standard), the conclusion follows from Richards's classification of infinite-type surfaces \cite{Ric}.
\end{proof}

\begin{lemma}\label{lemma:hb-in-avenue}
Let $\Sigma$ be an avenue surface, let $x_A,x_B$ be the two maximal ends of $\Sigma$, and let $R$ be a standard horizontally bounded subsurface of $\Sigma$.

Then for every neighborhood $U$ of either $x_A$ or $x_B$ in $\Sigma$, there exists a homeomorphism $\eta$ of $\Sigma$ such that $\eta(R)\subseteq U$.
\end{lemma}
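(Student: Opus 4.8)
The plan is to realize $R$ and its intended image as \emph{homeomorphic triples} and then invoke the change of coordinates principle, justified in the infinite-type setting by Richards's classification \cite{Ric}. By the symmetry between $x_A$ and $x_B$ we may assume $U$ is a neighborhood of $x_A$. First I would normalize $U$: using Lemma~\ref{lemma:partition-ends} (as in the paragraph preceding the statement) we may shrink $U$ so that it is bounded by a single separating curve, contains no maximal end other than $x_A$, and is disjoint from $R$ (possible since $R$ is horizontally bounded, hence misses some neighborhood of $x_A$); shrinking further we may assume $U\subseteq\Sigma_A$, where $R$ separates $\Sigma$ into $R$ together with two pieces $\Sigma_A\ni x_A$ and $\Sigma_B\ni x_B$, each carrying a single boundary curve. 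It then suffices to produce a standard horizontally bounded subsurface $R'\subseteq U$, with complementary pieces $\Sigma'_A\ni x_A$ and $\Sigma'_B\ni x_B$, such that $R'\cong R$, $\Sigma'_A\cong\Sigma_A$ and $\Sigma'_B\cong\Sigma_B$ as bordered surfaces and compatibly with the boundary identifications. Gluing these three homeomorphisms along the boundary curves produces $\eta\in\Homeo(\Sigma)$ with $\eta(R)=R'\subseteq U$, as desired.

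Next I would build the copy $R'\subseteq U$ with $R'\cong R$, which is exactly the content of Lemma~\ref{lemma:homeomorphic-subsurfaces}. By Lemma~\ref{lemma:E(A,B)} every equivalence class of ends of $E\setminus\{x_A,x_B\}$ accumulates to $x_A$ and hence meets $\Ends(U)$; similarly, if $\Sigma$ has infinite genus then the ends accumulated by genus accumulate to $x_A$, so $U$ carries infinite genus. Consequently I can carve out of $U$ a clopen set $P'\subseteq\Ends(U)\setminus\{x_A\}$, together with a prescribed amount of genus, so that the standard horizontally bounded subsurface $R'\subseteq U$ enclosing $P'$ has the same genus as $R$, the same intersection cardinalities with each maximal countable equivalence class, and a representative of every uncountable class. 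Lemma~\ref{lemma:homeomorphic-subsurfaces} then yields $R'\cong R$, while the clopen neighborhood $A'\ni x_A$ left over inside $U$ becomes $\Ends(\Sigma'_A)$, with $\Sigma'_A\subseteq U\subseteq\Sigma_A$ and $x_B\in\Ends(\Sigma'_B)=E\setminus(A'\sqcup P')$.

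The crux is matching the complementary pieces, i.e.\ proving $\Sigma_A\cong\Sigma'_A$ and $\Sigma_B\cong\Sigma'_B$; this is where I expect the real work to lie. The key structural input is that a subsurface bounded by one curve and forming a neighborhood of a maximal end is extremely homogeneous. Since $x_A$ is the unique maximal end of the subspace $\Ends(\Sigma_A)$, the same Mann--Rafi criterion invoked in the proof of Lemma~\ref{lemma:avenue} (\cite[Lemma~4.12]{MR}) shows that $(\Ends(\Sigma_A),\Ends(\Sigma_A)\cap E^g)$ is self-similar. Combined with tameness of the endspace (stable neighborhoods of $x_A$ and the absorption lemmas \cite[Lemmas~4.17--4.18]{MR} already used in Lemma~\ref{lemma:homeomorphic-subsurfaces}), self-similarity implies that any two subsurfaces with a single boundary curve that are neighborhoods of $x_A$ of equal genus have homeomorphic marked endspaces, hence are homeomorphic by Richards. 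The genus is forced to agree: it is $0$ in the planar case, and infinite on every such neighborhood when $\Sigma$ has infinite genus, again by Lemma~\ref{lemma:E(A,B)}. Applying this with $\Sigma'_A\subseteq\Sigma_A$ gives $\Sigma_A\cong\Sigma'_A$; the symmetric statement at $x_B$, noting $\Sigma_B\subseteq\Sigma'_B$ and that $x_B$ is the unique maximal end of $\Ends(\Sigma'_B)$, gives $\Sigma_B\cong\Sigma'_B$.

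In summary, the main obstacle is the homogeneity/self-similarity of neighborhoods of the maximal ends and the resulting identification of the (possibly much larger) complementary piece $\Sigma'_B$ with $\Sigma_B$; this mirrors, and reuses, the endspace bookkeeping of Lemma~\ref{lemma:homeomorphic-subsurfaces}, now applied to single-boundary neighborhoods of a maximal end where the matching hypotheses hold automatically. Once the three homeomorphisms $R\cong R'$, $\Sigma_A\cong\Sigma'_A$ and $\Sigma_B\cong\Sigma'_B$ are arranged to agree on the corresponding boundary curves, gluing them yields the homeomorphism $\eta$ with $\eta(R)\subseteq U$.
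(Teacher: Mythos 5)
Your overall strategy is the same as the paper's: build a copy $R'\subseteq U$ with matching end and genus invariants, invoke Lemma~\ref{lemma:homeomorphic-subsurfaces} to get $R\cong R'$, and conclude by a change-of-coordinates argument (your third paragraph, matching the complementary pieces, is a fleshing-out of what the paper compresses into ``the lemma follows''). However, there are two genuine gaps, both located in your second paragraph. First, Lemma~\ref{lemma:homeomorphic-subsurfaces} requires that \emph{both} subsurfaces contain a representative of every uncountable equivalence class; you arrange this for $R'$ but not for $R$, which is an arbitrary standard horizontally bounded subsurface and need not meet every such class. The paper repairs this at the outset by enlarging $R$ (harmless, since pushing the enlarged surface into $U$ also pushes $R$), and your proof needs the same step.

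Second, and more seriously, both that enlargement and your step ``Consequently I can carve out of $U$ a clopen set $P'\subseteq\Ends(U)\setminus\{x_A\}$ \dots with a representative of every uncountable class'' require a uniformity that Lemma~\ref{lemma:E(A,B)} does not give. That lemma only says that each equivalence class individually accumulates at $x_A$, hence meets $\Ends(U)$; it does not prevent the representatives of the (possibly infinitely many) classes from escaping towards $x_A$, in which case no clopen subset of $\Ends(U)\setminus\{x_A\}$ -- which is necessarily bounded away from $x_A$ -- can meet all of them, and more generally no horizontally bounded subsurface meets every class. This is exactly the ``limit type'' phenomenon of \cite[Definition~6.2]{MR}, and the paper rules it out by using the hypothesis that $\Map(\Sigma)$ is CB-generated, via \cite[Lemma~6.4]{MR} (no end set of an avenue surface has limit type). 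Your proposal never uses CB-generation at all, which is a red flag: the ``carving out'' step can genuinely fail for surfaces whose endspace has limit type, so this hypothesis must enter the argument where your word ``Consequently'' appears. (A further small point: your claim that $U$ carries infinite genus whenever $\Sigma$ does is not a direct consequence of Lemma~\ref{lemma:E(A,B)} in the case $E^g\subseteq\{x_B\}$; one must rule that case out by a separate nondisplaceability argument, though this is minor compared to the issue above.)
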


\begin{proof}
Let $E$ be the endspace of $\Sigma$. By Lemma~\ref{lemma:E(A,B)}, every equivalence class of ends in $E\setminus\{x_A,x_B\}$ accumulates to both $x_A$ and $x_B$. In addition, as $\Map(\Sigma)$ is CB-generated, it follows from \cite[Lemma~6.4]{MR} that no end set of $\Sigma$ has limit type in the sense of \cite[Definition~6.2]{MR}. Up to increasing $R$, we can therefore assume that $R$ contains an end from every equivalence class in $E\setminus\{x_A,x_B\}$.

We denote by $E_{\rm mc}(x_A,x_B)$ the subspace of $E\setminus\{x_A,x_B\}$ made of all ends that are maximal in $E\setminus\{x_A,x_B\}$ and whose equivalence class in $E$ is countable. By \cite[Lemma~6.13]{MR}, the set $E_{\rm mc}(x_A,x_B)$ is a union of finitely many equivalence classes of ends. Without loss of generality, we will assume that the neighborhood $U$ is standard, i.e.\ bounded by a single separating curve $\alpha$ on $\Sigma$. As $U$ contains a representative of every equivalence class in $E\setminus\{x_A,x_B\}$ (Lemma~\ref{lemma:E(A,B)}) and the endspace of $\Sigma$ is not of limit type, we can find a separating curve $\beta$ so that denoting by $R'$ the subsurface bounded by $\alpha$ and $\beta$, the set $\Ends(R')$ contains a representative of every equivalence class of ends in $E\setminus\{x_A,x_B\}$, and contains at least as many representatives from every class of $E_{\rm mc}(x_A,x_B)$ as $R$. Up to removing a stable neighborhood of some of the equivalence classes in $E_{\rm mc}(x_A,x_B)$, we can assume that $\Ends(R)$ and $\Ends(R')$ have the same number of representatives in each of these classes. By adjusting the genus, we can arrange that $R$ and $R'$ are homeomorphic, and the lemma follows.   
\end{proof}

\begin{lemma}\label{lemma:bdd-cb}
Let $\Sigma$ be an avenue surface. Every element in $\Bdd(\Sigma)$ is contained in a CB subgroup of $\Map(\Sigma)$.
\end{lemma}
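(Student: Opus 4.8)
The plan is to realize $f$ inside the subgroup of mapping classes supported in a neighborhood of the maximal end $x_A$, and to exhibit that subgroup as coarsely bounded by arranging the neighborhood to be self-similar; the coarse boundedness of mapping class groups of self-similar surfaces is exactly the phenomenon isolated by Mann and the third author in \cite[Proposition~3.1]{MR}. First I would reduce to a model case: by the discussion preceding the statement, $f$ has a representative supported on a standard horizontally bounded subsurface $R$, i.e.\ one bounded by exactly two separating curves and disjoint from neighborhoods of both $x_A$ and $x_B$. It therefore suffices to produce a single coarsely bounded subgroup of $\Map(\Sigma)$ containing every mapping class supported on one fixed such $R$.

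Second, I would build a self-similar neighborhood of $x_A$ containing $R$, together with a shift. By Lemma~\ref{lemma:E(A,B)} every equivalence class of ends of $\Sigma$ other than $\{x_A,x_B\}$ accumulates to $x_A$, so for a separating curve $\alpha$ sufficiently close to $x_A$ the component $\Sigma_A$ of $\Sigma\setminus\alpha$ containing $x_A$ can be chosen to contain $R$ and a representative of every such class. Using Lemma~\ref{lemma:homeomorphic-subsurfaces} — whose hypotheses are precisely the bookkeeping on genus and on the cardinalities of intersections with the countable and uncountable equivalence classes that one needs — I would decompose $\Sigma_A$ into a sequence of pairwise homeomorphic blocks $B_0,B_1,B_2,\dots$ accumulating at $x_A$ with $R\subseteq B_0$, and produce a homeomorphism $s$ of $\Sigma$ fixing $x_A$ and $x_B$ with $s(B_n)=B_{n+1}$, so that $s$ restricts to a homeomorphism of $\Sigma_A$ onto $\{x_A\}\cup\bigcup_{n\ge 1}B_n$. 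Since all blocks are homeomorphic and accumulate at $x_A$, any clopen partition of $\Ends(\Sigma_A)$ has a piece containing cofinitely many blocks, hence a homeomorphic copy of $\Ends(\Sigma_A)$ (transported there by a power of $s$); thus $(\Ends(\Sigma_A),\Ends(\Sigma_A)\cap E^g)$ is self-similar. By Lemma~\ref{lemma:avenue} the genus of $\Sigma$, and hence that of $\Sigma_A$, is $0$ or infinite, as required to apply \cite[Proposition~3.1]{MR}.

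Finally, I would invoke \cite[Proposition~3.1]{MR}: the mapping class group of the closed surface $\widehat{\Sigma_A}$ obtained by capping $\alpha$ with a disk is coarsely bounded, and I would transfer this to the subgroup $\Gamma\subseteq\Map(\Sigma)$ of mapping classes supported on $\Sigma_A$ via the continuous extend-by-the-identity homomorphism; since $f\in\Gamma$ this finishes the proof. I expect the transfer to be the main obstacle. The extend-by-identity map has target $\Map(\Sigma)$ rather than $\Map(\widehat{\Sigma_A})$, and its source $\Map(\Sigma_A,\partial)$ is only a central extension of the coarsely bounded group $\Map(\widehat{\Sigma_A})$ by the boundary twist $\langle T_\alpha\rangle$; intrinsically this extension need not be coarsely bounded, so one cannot simply quote that continuous homomorphic images of coarsely bounded groups are coarsely bounded. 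The resolution is that coarse boundedness of $\Gamma$ is to be established \emph{inside} the ambient group $\Map(\Sigma)$, where the shift $s$ conjugates the boundary twist to a family $s^{n}T_\alpha s^{-n}=T_{s^{n}\alpha}$ of pairwise commuting twists whose supports accumulate at $x_A$, and which therefore converge to the identity; feeding this self-similar shift structure into the Bergman-type boundedness argument underlying \cite[Proposition~3.1]{MR} shows that $\Gamma$ has bounded orbits in every continuous isometric action of $\Map(\Sigma)$, i.e.\ that $\Gamma$ is a coarsely bounded subgroup. If one prefers to keep $R$ away from $\alpha$ throughout, one may first displace it deep into $\Sigma_A$ using Lemma~\ref{lemma:hb-in-avenue}, so that $f$ already lies in the subgroup supported on a strictly smaller self-similar neighborhood of $x_A$.
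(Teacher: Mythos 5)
Your overall strategy---localize $f$ in a one-holed neighborhood $\Sigma_A$ of a maximal end whose end space is self-similar, and show that the subgroup $\Gamma\subseteq\Map(\Sigma)$ of mapping classes supported on $\Sigma_A$ is coarsely bounded---is morally sound, but it has a genuine gap exactly at the point you yourself flag as ``the main obstacle''. You correctly observe that coarse boundedness of $\Gamma$ inside $\Map(\Sigma)$ does not follow from \cite[Proposition~3.1]{MR} (the boundary twist/central extension issue), but your proposed repair---``feeding this self-similar shift structure into the Bergman-type boundedness argument underlying \cite[Proposition~3.1]{MR}''---is an assertion, not an argument, and it is precisely where all the work lies. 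What you are asserting is (a case of) the \emph{relative} coarse boundedness statement that Mann and Rafi actually prove: by \cite[Theorem~5.7]{MR}, since $\Map(\Sigma)$ is CB-generated (hence locally CB), there is a finite-type subsurface $K\subseteq\Sigma$ such that the subgroup $H$ of all mapping classes supported on $\Sigma\setminus K$ is CB in $\Map(\Sigma)$. The paper's proof consists of exactly this citation plus Lemma~\ref{lemma:hb-in-avenue}: choose $\eta\in\Homeo(\Sigma)$ with $\eta(R)\subseteq\Sigma\setminus K$, so that $f\in\eta^{-1}H\eta$, which is CB because coarse boundedness is invariant under conjugation. So the input you are missing is available off the shelf; without citing it (or carrying out the adaptation in full), your final step remains a gap.

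There are also two secondary problems in your construction. First, to produce pairwise homeomorphic blocks $B_n$ via Lemma~\ref{lemma:homeomorphic-subsurfaces} you must arrange that \emph{every block} contains a representative of every uncountable equivalence class; when there are infinitely many such classes this is not automatic for a subsurface bounded by two curves---it is exactly the ``limit type'' phenomenon, which is excluded only because $\Map(\Sigma)$ is CB-generated, via \cite[Lemma~6.4]{MR} (this is how the paper's proof of Lemma~\ref{lemma:hb-in-avenue} handles the same difficulty). Alternatively, you could skip the block decomposition entirely and quote \cite[Proposition~5.4]{MR}, which directly partitions $E$ into two self-similar pieces containing $x_A$ and $x_B$, as the paper does in the proof of Lemma~\ref{lemma:twist-morphism}. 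Second, the self-similar machinery requires $\Sigma_A$ to have genus zero or infinite: if $\Sigma$ has infinite genus all of which accumulates at $x_B$, and $R$ has positive genus, then $\Sigma_A$ has finite positive genus and \cite[Proposition~3.1]{MR} cannot apply, so one must work on the $x_B$-side instead; the side cannot be fixed in advance as you do. Both points are repairable, but together with the main gap they show that the hypotheses defining an avenue surface (CB-generation in particular) enter your argument in more places than the sketch accounts for.
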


\begin{proof}
By \cite[Theorem~5.7]{MR}, there exists a finite-type subsurface $K\subseteq\Sigma$ such that the subgroup $H\subseteq\Map(\Sigma)$ made of all mapping classes which are represented by a homeomorphism of $\Sigma$ supported on $\Sigma\setminus K$ is CB in $\Map(\Sigma)$. Now let $f\in\Bdd(\Sigma)$. By definition, the mapping class $f$ has a representative supported on a standard horizontally bounded subsurface $R$. Lemma~\ref{lemma:hb-in-avenue} therefore ensures that that there exists a homeomorphism $\eta$ of $\Sigma$ such that $\eta(R)\subseteq \Sigma\setminus K$. It follows that $\eta f\eta^{-1}\in H$. Therefore $f$ is contained in $\eta^{-1}H\eta$, which is a CB subgroup of $\Map(\Sigma)$.
\end{proof}

\subsubsection{The kernel of the twist homomorphisms}

\begin{lemma}\label{lemma:twist-morphism}
Let $\Sigma$ be an avenue surface. There exist $n\in\mathbb{N}$ and a split exact sequence 
\[
1\to N\to \Map^0(\Sigma) \to\mathbb{Z}^n\to 1
\] 
such that $N$ is contained in the closure of $\Bdd(\Sigma)$. 
\end{lemma}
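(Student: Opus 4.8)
The plan is to build the map $\Map^0(\Sigma)\to\mathbb{Z}^n$ as a package of \emph{flux} homomorphisms, one attached to each countable maximal equivalence class of ends, and to identify its kernel with the mapping classes that can be pushed off towards the two maximal ends, i.e.\ with $\overline{\Bdd(\Sigma)}$. The starting point is the structure established in Lemmas~\ref{lemma:avenue} and~\ref{lemma:E(A,B)}: $\Sigma$ has exactly two maximal ends $x_A,x_B$, and every other equivalence class accumulates to both of them. Using the notation of the proof of Lemma~\ref{lemma:hb-in-avenue} together with \cite[Lemma~6.13]{MR}, the set $E_{\mathrm{mc}}(x_A,x_B)$ of ends that are maximal in $E\setminus\{x_A,x_B\}$ and have countable equivalence class is a union of finitely many classes $C_1,\dots,C_m$. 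The one observation that makes everything work is that \emph{every} $f\in\Map(\Sigma)$ preserves each equivalence class setwise: if $f$ sends an end $x$ to an end $y$, then, applying $f$ (resp.\ $f^{-1}$) to small neighborhoods, one witnesses $y\preccurlyeq x$ (resp.\ $x\preccurlyeq y$), so $x\sim y$. Hence there is no permutation of the classes $C_i$ to worry about, and each $f\in\Map^0(\Sigma)$ induces a bijection of $C_i$ that, since it fixes $x_A$ and $x_B$, moves only finitely many ends across a fixed separating curve.

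Next I would define the homomorphisms. Fix once and for all a separating simple closed curve $\gamma$ with $x_A$ and $x_B$ on opposite sides (the core of the two-holed sphere provided by Lemma~\ref{lemma:partition-ends} with $k=2$), inducing a clopen partition $E=E_A\sqcup E_B$. For each $i$, write $\chi_{E_B}$ for the indicator of $E_B$ restricted to $C_i$ and set, for $f\in\Map^0(\Sigma)$,
\[
\phi_i(f)=\sum_{x\in C_i}\bigl(\chi_{E_B}(f(x))-\chi_{E_B}(x)\bigr),
\]
the net number of $C_i$-ends that $f$ carries from the $x_A$-side to the $x_B$-side. By the finiteness just noted this sum is finite, and the identity $\phi_i(fg)=\phi_i(f)+\phi_i(g)$ follows from $\chi_{E_B}\circ(fg)-\chi_{E_B}=(\chi_{E_B}\circ f-\chi_{E_B})\circ g+(\chi_{E_B}\circ g-\chi_{E_B})$ by reindexing the first sum along the ($C_i$-preserving) bijection $g$, using that $\chi_{E_B}\circ f-\chi_{E_B}$ is finitely supported. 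Assembling these gives a homomorphism $\mathbf\Phi=(\phi_1,\dots,\phi_m)\colon\Map^0(\Sigma)\to\mathbb{Z}^m$. Its image is a subgroup of $\mathbb{Z}^m$, hence free abelian of some rank $n$; choosing an isomorphism with $\mathbb{Z}^n$ and setting $N=\ker\mathbf\Phi$, I obtain a short exact sequence $1\to N\to\Map^0(\Sigma)\xrightarrow{\mathbf\Phi}\mathbb{Z}^n\to1$. As $\mathbb{Z}^n$ is free abelian, hence projective, the surjection splits automatically; alternatively one exhibits a section by realizing a basis of the fluxes through explicit shift homeomorphisms of the corresponding chimney classes.

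It then remains to prove the inclusion $N\subseteq\overline{\Bdd(\Sigma)}$. (The reverse inclusion is easy and not needed: a horizontally bounded map preserves each class and crosses no curve, so $\Bdd(\Sigma)\subseteq N$, and each $\phi_i$ is locally constant hence continuous, so $\overline{\Bdd(\Sigma)}\subseteq N$; in fact $N=\overline{\Bdd(\Sigma)}$.) Given $f\in N$, I would fix an exhaustion of $\Sigma$ by standard windows bounded by separating curves converging to $x_A$ and to $x_B$, and for each window $W$ produce a map $f_W\in\Bdd(\Sigma)$ agreeing with $f$ on $W$, obtained by capping $f$ off to the identity on neighborhoods of $x_A$ and of $x_B$. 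The point is that the vanishing of all fluxes forces the finitely many ends of each class $C_i$ in the two transition regions to be balanced, so the relevant bordered subsurfaces have equal genus, equal (finite) $C_i$-content for every $i$, and both contain representatives of every uncountable class; Lemma~\ref{lemma:homeomorphic-subsurfaces} then yields the homeomorphism needed to cap off, extended by the identity to a horizontally bounded $f_W$ with $f_W\simeq f$ on $W$. Letting $W$ exhaust $\Sigma$ gives $f_W\to f$, whence $f\in\overline{\Bdd(\Sigma)}$.

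The capping-off step is where I expect the real work to lie. One must verify that, once $\mathbf\Phi(f)=0$, the transition annuli near $x_A$ and $x_B$ genuinely satisfy all three hypotheses of Lemma~\ref{lemma:homeomorphic-subsurfaces}: the countable non-maximal ends (which carry no flux) must be dragged along using the stable neighborhoods furnished by tameness, and the uncountable classes absorbed by self-similarity, exactly as in the proofs of Lemmas~\ref{lemma:hb-in-avenue} and~\ref{lemma:homeomorphic-subsurfaces}. The content of the hypothesis $\mathbf\Phi(f)=0$ is precisely that it supplies the only numerical matching not automatic from the classification of surfaces and from tameness, so no additional invariant beyond the fluxes $\phi_i$ is required; the remaining bookkeeping is exactly of the type already carried out in \cite{MR}.
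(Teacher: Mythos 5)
Your construction of the end-flux homomorphisms is essentially the paper's: the paper fixes a partition $E=A\dunion B$ into two self-similar pieces (one per maximal end) and defines $\Psi\from\Map^0(\Sigma)\to\mathbb{Z}^k$ by exactly your net-crossing count on each countable maximal class, then proves $N\subseteq\overline{\Bdd(\Sigma)}$ by the same exhaustion-and-capping scheme via Lemma~\ref{lemma:homeomorphic-subsurfaces}. However, your proposal has a genuine gap: the end fluxes alone do not suffice, and the paper's proof contains a second, genuinely different homomorphism that you omit. When $\Sigma$ has infinite genus but no end of $E\setminus\{x_A,x_B\}$ is accumulated by genus, the paper adds the genus-flux homomorphism $\Phi_g\from\Map^0(\Sigma)\to\mathbb{Z}$ of Aramayona--Patel--Vlamis \cite{APV}, which measures the net amount of genus transported across a separating curve, and sets $\Phi=(\Psi,\Phi_g)$, $n=k+1$. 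Without it your kernel is too large. Concretely, take $\Sigma$ to be the surface of infinite genus with exactly two ends, both accumulated by genus (this is an avenue surface: its endspace is tame, $\Map(\Sigma)$ is CB-generated but not CB by \cite{MR}, and every finite-type subsurface can be pushed toward an end). Here $E\setminus\{x_A,x_B\}=\emptyset$, so your $\mathbf\Phi$ is the trivial homomorphism and your $N$ is all of $\Map^0(\Sigma)$; but a handle shift $h$ (as in \cite[Section~6]{PV}) lies in $\Map^0(\Sigma)$ and is \emph{not} in $\overline{\Bdd(\Sigma)}$, since $\Phi_g$ is locally constant (hence continuous), vanishes on every horizontally bounded mapping class, yet $\Phi_g(h)\neq 0$. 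So the inclusion $N\subseteq\overline{\Bdd(\Sigma)}$ fails for your $N$.

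The same omission surfaces inside your capping-off step, where you assert that vanishing of the fluxes forces ``the relevant bordered subsurfaces [to] have equal genus.'' This is false: a homeomorphism can transport genus from the $x_A$-side to the $x_B$-side without moving any end, so equality of the end counts in the transition regions $Q_j^{\pm}$ and $T_j^{\pm}$ does not yield $\genus(Q_j^{\pm})=\genus(T_j^{\pm})$, which is one of the three hypotheses of Lemma~\ref{lemma:homeomorphic-subsurfaces} and is used essentially in the paper's proof. That genus equality is exactly what membership in $\ker\Phi_g$ buys (and in the remaining cases it is automatic: either $\Sigma$ has genus $0$, or some end of $E\setminus\{x_A,x_B\}$ is accumulated by genus and the exhaustion is arranged so that every transition region has infinite genus). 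To repair your argument you must adjoin $\Phi_g$ to your tuple of fluxes in the relevant case; the splitting is then realized by a handle shift, and the rest of your outline goes through as in the paper.
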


\begin{proof}
Let $E$ be the end space of $\Sigma$. By \cite[Proposition~5.4]{MR}, the space $E$ has a partition $E=A\dunion B$ into two disjoint self-similar subsets, each of which contains exactly one of the two ends of $\Sigma$ of maximal type $x_A,x_B$. Following \cite[Definition~6.11]{MR}, we let $E_{\rm mc}(x_A,x_B)$ be the subspace of $E\setminus\{x_A,x_B\}$ made of all ends that are maximal in $E\setminus\{x_A,x_B\}$ and whose equivalence class in $E$ is countable.

We aim to define a homomorphism $\Phi:\Map^0(\Sigma)\to\mathbb{Z}^n$ for some $n\in\mathbb{N}$. Let $k$ be the number of equivalence classes of ends contained in $E_{\rm mc}(x_A,x_B)$, which is finite by \cite[Lemma~6.13]{MR}, and let $[y_1],\dots,[y_k]$ be these equivalence classes. In other words 
$E_{\rm mc}(x_A,x_B) = \cup_{i=1}^k [y_i]$. Consider the map 
\[
\Psi \from \Map^0(\Sigma) \to \mathbb{Z}^k, \qquad \Psi(f) = (n_1(f), \dots, n_k(f)),
\]
where for every $i\in\{ 1, \dots, k\}$, we let
\[
n_i(f) = \left| \Big\{ x \in [y_i] \, \Big| \, x \in A, \quad f(x) \in B\Big\}\right|
- \left| \Big\{ x \in [y_i] \, \Big| \, x \in B, \quad f(x) \in A\Big\}\right|. 
\]
Notice that this is well-defined: indeed, every mapping class $f\in\Map^0(\Sigma)$ fixes $x_A$ and $x_B$, and as the equivalences classes $[y_i]$ are countable -- whence discrete in $E$, see \cite[Observation~6.12]{MR} -- it follows that the two quantities that appear in the definition of $n_i(f)$ are finite. In addition $\Psi$ is a homomorphism: indeed, one checks that $n_i(g\circ f)=n_i(g)+n_i(f)$ by partitioning the points $x\in E$ into eight subsets, depending whether $x$ belongs to $A$ or $B$, whether $f(x)$ belongs to $A$ or $B$, and whether $g\circ f(x)$ belongs to $A$ or $B$, counting the points in each of these subsets of recording their contribution to $n_i$.

If some end in $E\setminus\{x_A,x_B\}$ is accumulated by genus, or if $\Sigma$ has genus $0$, then we let $n=k$ and $\Phi=\Psi$. 

If $\Sigma$ has infinite genus and no end in $E\setminus\{x_A,x_B\}$ is accumulated by genus, then we let $n=k+1$. In this case, following work of Aramayona, Patel and Vlamis \cite[Section~3]{APV}, we define an extra homomorphism $\Phi_g:\Map^0(\Sigma)\to\mathbb{Z}$ in the following way (and we let $\Phi=(\Psi,\Phi_g)$). Let $c$ be a separating curve on $\Sigma$ which separates the two ends $x_A,x_B$, and let $f\in\Map^0(\Sigma)$. Let $R\subseteq\Sigma$ be a horizontally bounded subsurface of $\Sigma$ which is bounded by two separating curves $\alpha_A,\alpha_B$ which both separate $x_A$ from $x_B$, such that both $c$ and $f(c)$ are contained in $R$ -- with $\alpha_A$ closer to $x_A$ than $\alpha_B$. Then $c$ separates $R$ into two subsurfaces $R_A$ (containing $\alpha_A$ in its boundary) and $R_B$ (containing $\alpha_B$ in its boundary), while $f(c)$ separates $R$ into two subsurfaces $R'_A$ (containing $\alpha_A$ in its boundary) and $R'_B$ (containing $\alpha_B$ in its boundary). We then let $\Phi_g(f)=\genus(R'_A)-\genus(R_A)$. This quantity does not depend on the choice of $R$, and $\Phi_g$ is the desired homomorphism, see \cite[Proposition~3.3]{APV}.

We claim that in all cases, the homomorphism $\Phi:\Map^0(\Sigma)\to\mathbb{Z}^n$ is surjective and admits a section. Indeed, a section of $\Psi$ was constructed in Step~3 of the proof of \cite[Proposition~6.18]{MR}. When $\Sigma$ has infinite genus and no end in $E\setminus\{x_A,x_B\}$ is accumulated by genus, a section of $\Phi_g$ is given by the cyclic subgroup generated by a handle shift as defined in \cite[Section~6]{PV} (see also \cite[Definition~6.20]{MR}).

Finally, let $N\unlhd\Map^0(\Sigma)$ be the kernel of $\Phi$, and let $f \in N$; we aim to prove that $f$ belongs to the closure of $\Bdd(\Sigma)$. Using Lemma~\ref{lemma:hb-in-avenue} (applied to a horizontally bounded subsurface $R$ such that $\Ends(R)$ contains a representative of every equivalence class of ends in $E\setminus\{x_A,x_B\}$), we can find an exhaustion of $\Sigma$ by subsurfaces $R_j$ (with $j\in\mathbb{N}$) that are all horizontally bounded, so that the boundary of each $R_j$ consists of two separating curves, and for every $j\in\mathbb{N}$, each of the two connected components of $R_j\setminus R_{j-1}$ contains a representative of every uncountable equivalence class in $E\setminus\{x_A,x_B\}$. We can assume that for every $j\in\mathbb{N}$, one of the complementary components of $R_j$ has all its ends contained in $A$ and the other has all its ends contained in $B$, and the same holds true for $f(R_j)$.

Let $\ell \from {\mathbb N} \to {\mathbb N}$ be a map such that 
for every $j\in\mathbb{N}$ we have
\[
f(R_j) \subseteq R_{\ell(j)-1}. 
\]
Then $(R_{\ell(j)} - R_j)$ and $(R_{\ell(j)} - f(R_j))$ both have two connected components. 
We denote the components of $(R_{\ell(j)} - R_j)$ by $Q_j^-$ and $Q_j^+$
and the components of $(R_{\ell(j)} - f(R_j))$ by $T_j^-$ and $T_j^+$. 
Since $f \in N$, for every $i\in\{1, \dots, k\}$, we have 
$$|Q_j^-\cap [y_i]|=|T_j^-\cap [y_i]|\qquad\text{and}\qquad |Q_j^+\cap [y_i]|=|T_j^+\cap [y_i]|,$$
\[
\genus(Q_j^-) = \genus (T_j^-)
\qquad\text{and}\qquad
\genus(Q_j^+) = \genus (T_j^+). 
\]
Furthermore, the choice of our exhaustion ensures that $Q_j^+,Q_j^-,T_j^+,T_j^-$ all contain a representative of every uncountable equivalence class in $E$ (notice here that it was important to assume that $f(R_j)$ is contained in $R_{\ell(j)-1}$ and not only in $R_{\ell(j)}$). Therefore, Lemma~\ref{lemma:homeomorphic-subsurfaces} ensures that
$Q_j^-$ is homeomorphic to $T_j^-$ and $Q_j^+$ is homeomorphic to $T_j^+$. Therefore, there is a homeomorphism $f_j \in \Map(R_{\ell(j)})$ such that 
$f_j \big|_{R_j} = f\big|_{R_j}$. As $f_j \in \Bdd(\Sigma)$, it follows that $f$ is in the closure 
of $\Bdd(\Sigma)$, as desired. 
\end{proof}

We are now in position to conclude our proof of the main theorem of the section.

\begin{proof}[Proof of Theorem~\ref{theo:no-hyperbolic-action}]
We can assume that $\Map(\Sigma)$ is not CB, as otherwise the conclusion is obvious. Therefore $\Sigma$ is an avenue surface. In view of Lemmas~\ref{lemma:bdd-normal},~\ref{lemma:bdd-cb} and~\ref{lemma:twist-morphism}, the group $G=\Map^0(\Sigma)$ satisfies all assumptions from Lemma~\ref{lemma:no-hyperbolic-criterion}, with $H=\Bdd(\Sigma)$. Therefore every continuous isometric action of $\Map^0(\Sigma)$ on a hyperbolic space is elementary. As $\Map^0(\Sigma)$ has finite index in $\Map(\Sigma)$, the same conclusion holds true for $\Map(\Sigma)$.
\end{proof}

\footnotesize

\bibliographystyle{alpha}
\bibliography{bbf}

\newcommand{\etalchar}[1]{$^{#1}$}
\begin{thebibliography}{CdCMT15}

\bibitem[AFP17]{AFP}
J.~Aramayona, A.~Fossas, and H.~Parlier.
\newblock Arc and curve graphs for infinite-type surfaces.
\newblock {\em Proc. Amer. Math. Soc.}, 145(11):4995--5006, 2017.

\bibitem[AGK{\etalchar{+}}19]{AGKMTW}
J.~Aramayona, T.~Ghaswala, A.E. Kent, A.~McLeay, J.~Tao, and R.R. Winarski.
\newblock Big {T}orelli groups: generation and commensuration.
\newblock {\em Groups Geom. Dyn.}, 13(4):1373--1399, 2019.

\bibitem[APV]{APV2}
T.~Aougab, P.~Patel, and N.G. Vlamis.
\newblock Isometry groups of infinite-genus hyperbolic surfaces.
\newblock {\em in preparation}.

\bibitem[APV17]{APV}
J.~Aramayona, P.~Patel, and N.G. Vlamis.
\newblock The first integral cohomology of big mapping class groups.
\newblock {\em arXiv:1711.03132}, 2017.

\bibitem[Bav16]{Bav}
J.~Bavard.
\newblock Hyperbolicité du graphe des rayons et quasi-morphismes sur un gros
  groupe modulaire.
\newblock {\em Geom. Topol.}, 20(1):491--535, 2016.

\bibitem[BBF15]{BBF}
M.~Bestvina, K.~Bromberg, and K.~Fujiwara.
\newblock Constructing group actions on quasi-trees and applications to mapping
  class groups.
\newblock {\em Publ. Math. Inst. Hautes \'{E}tudes Sci.}, 122:1--64, 2015.

\bibitem[BBF16]{BBF0}
M.~Bestvina, K.~Bromberg, and K.~Fujiwara.
\newblock Stable commutator length on mapping class groups.
\newblock {\em Ann. Inst. Fourier (Grenoble)}, 66(3):871--898, 2016.

\bibitem[BDR18]{BDR}
J.~Bavard, S.~Dowdall, and K.~Rafi.
\newblock Isomorphisms between big mapping class groups.
\newblock {\em Int. Math. Res. Not. IMRN}, 2018.
\newblock https://doi.org/10.1093/imrn/rny093.

\bibitem[BF02]{BF}
M.~Bestvina and K.~Fujiwara.
\newblock Bounded cohomology of subgroups of mapping class groups.
\newblock {\em Geom. Topol.}, 6:69--89, 2002.

\bibitem[BG18]{BG}
J.~Bavard and A.~Genevois.
\newblock Big mapping class groups are not acylindrically hyperbolic.
\newblock {\em Math. Slovaca}, 68(1):71--76, 2018.

\bibitem[BW18]{BW}
J.~Bavard and A.~Walker.
\newblock Two simultaneous actions of big mapping class groups.
\newblock {\em arXiv:1806.10272}, 2018.

\bibitem[Cal]{Cal}
D.~Calegari.
\newblock Big mapping class groups and dynamics.
\newblock
  https://lamington.wordpress.com/2009/06/22/big-mapping-class-groups-and-dynamics/.

\bibitem[CC19]{CC}
D.~Calegari and L.~Chen.
\newblock Big mapping class groups and rigidity of simple circle.
\newblock {\em arXiv:1907.07903}, 2019.

\bibitem[CdCMT15]{CCMT}
P.-E. Caprace, Y.~de~Cornulier, N.~Monod, and R.~Tessera.
\newblock Amenable hyperbolic groups.
\newblock {\em J. Eur. Math. Soc.}, 17(11):2903--2947, 2015.

\bibitem[CMM20]{CMM}
M.~Clay, J.~Mangahas, and D.~Margalit.
\newblock Right-angled {A}rtin groups as normal subgroups of mapping class
  groups.
\newblock {\em arXiv:2001.10587}, 2020.

\bibitem[Cou16]{Cou}
R.~Coulon.
\newblock Théroie de la petite simplification : une approche géométrique.
\newblock {\em Astérisque}, 380(Séminaire Bourbaki Vol. 2014/2015, Exp. No.
  1089):1--33, 2016.

\bibitem[DFV19]{DFV}
M.G. Durham, F.~Fanoni, and N.G. Vlamis.
\newblock Graphs of curves on infinite-type surfaces with mapping class group
  actions.
\newblock {\em Ann. Instit. Fourier (Grenoble)}, 68(6):2581--2612, 2019.

\bibitem[DGO17]{DGO}
F.~Dahmani, V.~Guirardel, and D.~Osin.
\newblock Hyperbolically embedded subgroups and rotating families in groups
  acting on hyperbolic spaces.
\newblock {\em Mem. Amer. Math. Soc.}, 245(1156):v+152, 2017.

\bibitem[FGM20]{FGM}
F.~Fanoni, T.~Ghaswala, and A.~McLeay.
\newblock Homeomorphic subsurfaces and the arcs that survive.
\newblock {\em arXiv:2003.04750}, 2020.

\bibitem[FM12]{FM}
B.~Farb and D.~Margalit.
\newblock {\em A primer on mapping class groups}, volume~49 of {\em Princeton
  Mathematical Series}.
\newblock Princeton University Press, Princeton, NJ, 2012.

\bibitem[Gro87]{Gro}
M.~Gromov.
\newblock Hyperbolic groups.
\newblock In S.M. Gersten, editor, {\em Essays in {G}roup {T}heory}, volume~8
  of {\em MSRI Publications}, pages 75--265. Springer-Verlag, 1987.

\bibitem[Gui14]{Gui}
V.~Guirardel.
\newblock Geometric small cancellation.
\newblock In {\em Geometric group theory}, volume~21 of {\em IAS/Park City
  Math. Ser.}, pages 55--90, Providence, RI, 2014. Amer. Math. Soc.

\bibitem[HM19]{HM}
M.~Handel and L.~Mosher.
\newblock Second bounded cohomology and {WWPD}.
\newblock {\em arXiv:1901.01301}, 2019.

\bibitem[HMV18]{HHMV2}
J.~Hern\'andez Hern\'andez, I.~Morales, and F.~Valdez.
\newblock Isomorphisms between curve graphs of infinite-type surfaces are
  geometric.
\newblock {\em Rocky Mountain J. Math.}, 48(6):1887--1904, 2018.

\bibitem[HMV19]{HHMV}
J.~Hern\'andez Hern\'andez, I.~Morales, and F.~Valdez.
\newblock The {A}lexander method for infinite-type surfaces.
\newblock {\em Michigan Math. J.}, 68(4):743--753, 2019.

\bibitem[LS77]{LS}
R.C. Lyndon and P.E. Schupp.
\newblock {\em Combinatorial group theory}, volume~89 of {\em Ergebnisse der
  Mathematik und ihrer Grenzgebiete}.
\newblock Springer-Verlag, Berlin, 1977.

\bibitem[Man20]{Man2}
K.~Mann.
\newblock Automatic continuity for homeomorphism groups of noncompact
  manifolds.
\newblock {\em arXiv:2003.01173}, 2020.

\bibitem[McL20]{McL}
A.~McLeay.
\newblock The mapping class group of the {C}antor tree has only geometric
  normal subgroups.
\newblock {\em arXiv:2002.06970}, 2020.

\bibitem[MM99]{MM}
H.A. Masur and Y.N. Minsky.
\newblock Geometry of the complex of curves {I}. {H}yperbolicity.
\newblock {\em Invent. Math.}, 138(1):103--149, 1999.

\bibitem[MM00]{MM2}
H.A. Masur and Y.N. Minsky.
\newblock Geometry of the complex of curves {II}. {H}ierarchical structure.
\newblock {\em Geom. Funct. Anal.}, 10(4):902--974, 2000.

\bibitem[MR19]{MR}
K.~Mann and K.~Rafi.
\newblock Large scale geometry of big mapping class groups.
\newblock {\em arXiv:1912.10914}, 2019.

\bibitem[PV18]{PV}
P.~Patel and N.~Vlamis.
\newblock Algebraic and topological properties of big mapping class groups.
\newblock {\em Algebr. Geom. Topol.}, 18(7):4109--4142, 2018.

\bibitem[Ras19]{Ras}
A.J. Rasmussen.
\newblock {WWPD} elements of big mapping class groups.
\newblock {\em arXiv:1909.06680}, 2019.

\bibitem[Ric63]{Ric}
I.~Richards.
\newblock On the classification of noncompact surfaces.
\newblock {\em Trans. Amer. Math. Soc.}, 106:259--269, 1963.

\bibitem[Ros]{Ros}
C.~Rosendal.
\newblock {\em Coarse geometry of topological groups}.
\newblock Book manuscript. Available at
  http://homepages.math.uic.edu/~rosendal/PapersWebsite/Coarse-Geometry-Book23.pdf.

\end{thebibliography}

\begin{flushleft}
Camille Horbez\\
CNRS\\ 
Universit\'e Paris-Saclay, CNRS,  Laboratoire de math\'ematiques d'Orsay, 91405, Orsay, France \\
\emph{e-mail:}\texttt{camille.horbez@universite-paris-saclay.fr}\\
\end{flushleft}

\begin{flushleft}
Yulan Qing\\
Department of Mathematics, University of Toronto, Toronto, ON, Canada\\
\emph{e-mail:}\texttt{yqing@math.toronto.edu} 
\end{flushleft}

\begin{flushleft}
Kasra Rafi\\
Department of Mathematics, University of Toronto, Toronto, ON, Canada\\
\emph{e-mail:}\texttt{rafi@math.toronto.edu} 
\end{flushleft}

\end{document}